\newcommand{\OmegaT}{{\Omega_T}} 
\newcommand{\curlyA}{\mathcal{A}}
\newcommand{\curlyB}{\mathcal{B}}
\newcommand{\curlyD}{\mathcal{D}}
\newcommand{\curlyH}{\mathcal{H}}
\newcommand{\curlyR}{\mathcal{R}}
\DeclarePairedDelimiter{\prt}{(}{)}
\newcommand \commentout[1] {}
\DeclareMathOperator*{\supp}{\operatorname{supp}}
\newcommand{\partialt}[1]{\dfrac{\partial#1}{\partial t}}
\DeclareMathAlphabet{\mathup}{OT1}{\familydefault}{m}{n}
\newcommand{\dx}[1]{\mathop{}\!\mathup{d} #1}
\theoremstyle{plain}
\newtheorem{theorem}{Theorem}[section]
\newtheorem{lemma}[theorem]{Lemma}
\newtheorem{corollary}[theorem]{Corollary}
\theoremstyle{remark}
\newtheorem{remark}[theorem]{\bf Remark}
\newcommand{\ie}{\emph{i.e.}\;}
\newcommand{\cf}{\emph{cf.}\;}
\newcommand{\sign}{\mathrm{sign}}
\newcommand{\ddt}{\frac{\dx{}}{\dx{t}}}
\newcommand{\es}{\varepsilon}
\newcommand{\R}{\mathbb{R}}
\begin{document}

\title{Phenotypic heterogeneity in a model of tumor growth: existence of solutions and incompressible limit
}

\author{Noemi David\thanks{Institut Camille Jordan, Universit\'e Claude Bernard - Lyon 1, 21 Av. C. Bernard, 69100 Villeurbanne, France} }
\definecolor{green}{rgb}{0.2, 0.9, 0.5}
\maketitle
\begin{abstract}
We consider a (degenerate) cross-diffusion model of tumor growth structured by phenotypic trait.
We prove the existence of weak solutions and the incompressible limit as the pressure becomes stiff extending methods recently introduced in the context of two-species cross-diffusion systems. In the stiff-pressure limit, the compressible model generates a free boundary problem of Hele-Shaw type. Moreover, we prove a new $L^4$-bound on the pressure gradient.
\end{abstract} 

\begin{flushleft}
    \noindent{\makebox[1in]\hrulefill}
\end{flushleft}
	2010 \textit{Mathematics Subject Classification.} 35B45; 35K57; 35K65; 35Q92; 76N10;  76T99; 
	\newline\textit{Keywords and phrases.} Structured model, porous medium, incompressible limit, free boundary, Aronson-B\'enilan estimate, tumor growth \\[-2.em]
\begin{flushright}
    \noindent{\makebox[1in]\hrulefill}
\end{flushright} 

\section{Introduction}

We consider the following model of tumor growth structured by phenotypic trait, represented by the continuous variable $y\in[0,1]$. The cell proliferation rate depends on both the trait and the pressure inside the tissue. The motion of cells is driven by Darcy's law, since the cell movement is passively generated by the birth and death of cells which create pressure gradients. We denote by $n=n(y,x,t)$ the density of the population with phenotypic trait $y\in[0,1]$, and with $\varrho=\varrho(x,t)$ the total density at point $x\in\R^d$ and time $t>0$.
The pressure is related to the total density by the following power law
 \begin{equation}\label{pressure law pheno}
     p(x,t)= \prt*{\varrho(x,t)}^\gamma, \qquad \gamma>1.
 \end{equation}
The model is the following
 \begin{equation}\label{eq: prob pheno}
    \begin{dcases} \partialt n(y,x,t) - \nabla \cdot(n(y,x,t) \nabla p(x,t)) = n(y,x,t) R(y,p(x,t)), \qquad (y,x,t) \in [0,1]\times\R^d \times (0,\infty),\\[0.2em]
    \varrho(x,t) = \int_0^1 n(y,x,t) \dx{y},
    \end{dcases}
 \end{equation}
with initial data $n_0(y,x)\in L_+^{\infty}([0,1]\times \R^d)\cap L^1([0,1]\times \R^d)$, and where $\nabla$ and $\Delta$ are derivatives with respect to the variable $x$.

Let us point out that the equation satisfied by $\varrho(x,t)$ is a porous medium-reaction equation with coefficient $\gamma+1$, namely
\begin{equation}\label{eq: density rho}
     \partial_t \varrho - \frac{\gamma}{\gamma+1}\Delta \varrho^{\gamma+1} = \varrho \curlyR,
 \qquad 
    \curlyR=\int_0^1 \sigma(y) R(y,p)\dx{y},
\end{equation}
where with $\sigma=n/\varrho$ we denote the phenotype density fractions, while $\curlyR$ represents the total population growth rate.

\bigskip
\noindent
\textbf{Structured models: motivations.} 
The mathematical modelling of living tissue has attracted increasing attention in the last decades for both its ability to describe and investigate biological phenomenon and the extremely challenging mathematical problems that arise from such models. Among them, there is a growing interest towards models where the population density is structured by a phenotypic trait.
In structured models, intra-population heterogeneity is taken into account by letting the mobility rate and/or the growth rate of each phenotypic distribution be functions of the structuring variable. Most of these models are based on Fisher-KPP equations, hence they describe the random movement of the cells through a linear diffusion term, with a phenotype-dependent mobility rate, and cell proliferation through a logistic growth rate. Non-local reaction terms are also considered, as in the non-local version of the Fisher-KPP model, \cite{perthamefkppnonlinear}, as well as divergence terms with respect to the phenotypic state to account for mutations, see for instance \cite{Calvez}. In this paper, Calvez \textit{et al.} introduce a model in which only the mobility rate depends on the phenotypic trait. In particular, they assume the mobility rate to be proportional to the structuring variable. 
Computing an exact asymptotic traveling wave solution, they show that phenotypic segregation occurs and leads to front acceleration. Originating from \cite{Calvez}, the acceleration of invasion fronts has been further studied in \cite{BMR, BouinCalvezPerthame} in the case of unbounded mobility, see also \cite{Lorenziesaim, Gatenbyanderson, andersonbyrnelorenzi2020} and references therein for applications of structured PDEs models to tumor growth.

In \cite{lorenziperthameruan2021}, Lorenzi \textit{et al.} propose a model structured by phenotypic trait to study a phenomenon arising in cancer development which is usually referred to as `\textit{growth or go}', namely the dichotomy of migration and proliferation. As investigated in \cite{Costmigration, GerleeA2009, GerleeN2012, Dichotomy}, more mobile cells tend to divide less than cells that have a lower mobility rate. For this reason, the authors consider mobility and growth rates which are, respectively, increasing and decreasing functions of the structuring variable. Unlike the previously mentioned models, they consider a velocity field which depends on the total population, \ie the integral of the distributions with respect to the phenotypic trait. In particular, they take the velocity field to be proportional to the gradient of the total density. Therefore, the diffusion in the model is degenerate and no longer linear. The authors study the creation of compactly supported invasion fronts, and show that phenotypic separation occurs in the case of bounded mobility while the front undergoes acceleration in the case of unbounded mobility. We also refer the reader to \cite{Lorenzi2022} for an extension of this result to more general pressure laws and the derivation of a corresponding individual-based model.

\paragraph{Porous medium models.}
As suggested in \cite{lorenziperthameruan2021}, a natural generalisation of their model consists of considering a pressure $p$ related to the density by a power law with exponent greater than 1, as in Eq.~\eqref{pressure law pheno}. This pressure law has been extensively used in the modelling of tumor growth since it can be associated with the pressure of a compressible fluid. Combining the power law with Darcy's law yields to porous medium-type equations as Eq.\eqref{eq: density rho}. Indeed, the invasion of cancer cells can be seen as the motion of a fluid through a porous medium (the extra-cellular matrix)~\cite{byrneChaplain96}.

The power law was first adopted for one-species models of tumor growth, see for instance \cite{PQV, PV2015} and references therein. Furthermore, this pressure law is of particular interest since passing to the limit $\gamma\to\infty$, it is possible to establish a link between compressible models and "geometrical" problems. As the pressure becomes stiffer and stiffer, porous medium models converge to Hele-Shaw free boundary problems where the density is saturated and the pressure satisfies an elliptic equation. This limit, referred to as \textit{incompressible limit} or \textit{stiff pressure limit}, has been studied for several non-structured one-species models, starting from the seminal paper by Perthame \textit{et al.} \cite{PQV}. For an overview on the single-species case, we refer the reader to \cite{PQV, DP, DavidSchmidtchen2021, KP17, DavidDebiecPerthame2021, DeSc, GKM2020, AKY14, activemotion} and references therein.

\paragraph{Multi-species extensions.} Lately, multi-phase extensions of the model introduced in \cite{PQV} have been studied from different perspectives. Multi-species models allow to study the interaction between different types of tissue, for instance, cancer tissue, immune cells, healthy tissue, or dead tissue. In cross-reaction-diffusion systems, the coupling of the single densities equations gives rise to new mathematical challenges, such as the loss of regularity due to internal layers, namely regions where two species get in contact. For this reason, the mathematical analysis of these models presents many involved open problems. 
In 2018, Carrillo \textit{et al.} show the existence of solutions to a reaction-cross-diffusion system of two equations using methods from optimal transport \cite{CFSS}. Their result, which was achieved in one spatial dimension, was later extended in 2019 by Gwiazda \textit{et al.} in multiple dimensions \cite{GPS}. Here, the authors consider a two-species system that is the discrete (and cross-reaction) counterpart of our model, \ie Eq.~\eqref{eq: prob pheno} for $y\in \{1,2\}$. In particular, the two species evolve under Darcy's law, where the pressure is given by $p=(n_1+n_2)^\gamma$, and $n_i, \ i=1,2$ denotes the two phases. Their existence result relies on applying a uniformly parabolic regularisation to the initial data and then passing to the limit. To this end, the most involved term is the nonlinear cross-diffusion term $n_i\nabla p$. In order to pass to the limit, the authors prove an $L^2$-version of the Aronson-B\'enilan estimate, which is a celebrated estimate in the context of porous medium equations and provides a bound on the Laplacian of the pressure. We refer the reader to \cite{AB} for the classical result.
The same problem was then approached in \cite{price2020global}, in which the authors are able to prove convergence by focusing on the quantity $(n_1+n_2)^{\gamma+1}$ rather than the pressure itself. Their proof is simpler since it does not require any regularity result on the second-order derivatives of $p$. In fact, in \cite{price2020global} the authors recover the strong convergence of $\nabla (n_1+n_2)^{\gamma+1}$ without using the Aronson-B\'enilan estimate of \cite{GPS}, for which a restrictive condition on the reaction terms was needed. A recent result by Jacobs \cite{jacobs} uses a similar energy dissipation relation in order to obtain strong compactness of $\nabla p$ in a very general setting.
We refer the reader to \cite{hilhorst2012} for a reference of the existence of smooth solutions in the case of smooth initial data.
Moreover, let us mention that, to the best of the author's knowledge, up to date no uniqueness results for the cross-diffusion model analysed in \cite{price2020global, GPS} are known. This question surely represents a challenging open problem.
 
As mentioned above, the analysis of the incompressible limit for porous medium models has a long history and has been addressed by many researchers for several models. The stiff limit for systems including two different species has been first addressed by Bubba \textit{et al.} in 2019, \cite{BPS2020}, where the authors use an approach based on a $L^2$-Aronson-B\'enilan estimate in the spirit of \cite{GPS}. However, due to the absence of $BV$ controls on the single species population densities, their argument only works in dimension 1. The result in any spatial dimension has been recently achieved by Liu and Xu in \cite{LX2021}, where the authors consider a cross-reaction-diffusion system in a bounded domain with Neumann boundary conditions. Rather than dealing with the pressure, $p_\gamma= \varrho_\gamma^\gamma$, the authors focus on the quantity $\varrho_\gamma^{\gamma+1},$ proving strong compactness of its gradient, thus being able to prove convergence of the cross-diffusion terms. However, they are not able to include pressure-dependent reaction terms, and proving strong compactness of the pressure itself remains an open question in this setting. A similar method to obtain strong compactness on the pressure gradient without needing any uniform bound on the second-order derivatives was also developed in \cite{jacobs}. Recently, the same author proved that it is indeed possible to show uniform boundedness in $L^1$ of the pressure time derivative, hence finding strong compactness of the gradient through a newer version of the Aronson-B\'enilan estimate \cite{jacobs2}. The stiff limit for cross-diffusion systems has also been studied for different pressure laws and in the presence of drifts, see for instance \cite{KM18, DavidSchmidtchen2021, DEBIEC2020}.

\paragraph{Our contribution.}
In this paper, we aim to study the existence and regularity of solutions to System~\eqref{eq: prob pheno} and their incompressible limit. This problem can be seen as an infinitely-many-species extension of the models studied in \cite{price2020global,GPS, LX2021}. 
At first, we extend the method by \cite{price2020global} to the structured case. Adapting the same argument, we are able to prove the existence of global weak solutions, \cf Theorem~\ref{thm: existence pheno}.

The second main result of the paper, \cf Theorem~\ref{thm: limit problem} and Theorem~\ref{thm: complem relation}, concerns the  incompressible limit of System~\eqref{eq: prob pheno}. As $\gamma\to\infty$ in the pressure law, the problem turns out to be a free boundary problem of Hele-Shaw type. By extending and adapting the new method used in \cite{LX2021}, we are able to infer the compactness needed to pass to the limit. Moreover, by restricting our study to the class of compactly supported solutions, we are able to show strong compactness of the pressure $p_\gamma$ which, unlike in \cite{LX2021}, allows us to account for pressure-dependent reaction terms. 

Finally, we prove higher integrability results on the pressure gradient. In particular, we prove the uniform $L^4$-boundedness of $\nabla p_\gamma$, \cf Theorems~\ref{thm: L4 pheno}-\ref{remark: L4}, which has been introduced in the context of one-species porous medium models, see for instance \cite{MePeQu, DP, DavidSchmidtchen2021}, and represents a novelty in the multi-species case.

\paragraph{Plan of the paper.} In the next section, we present the assumptions and the main results of the paper. Section~\ref{sec: existence} is devoted to the proof of the existence of weak solutions: in Section~\ref{sec: preliminaries and reg} we introduce the regularised problem, obtained performing a viscosity perturbation, and we infer uniform a priori estimates, while in Section~\ref{sec: limit es to zero}, we show that $\nabla (\varrho_\es)^{\gamma+1}$ is strongly precompact in $L^2$, which is essential in order to pass to the limit in the regularised problem. In Section~\ref{sec: incompressible limit}, we study the asymptotics of Problem~\eqref{eq: prob pheno} as $\gamma\to\infty$. The additional regularity estimates are deduced in Section~\ref{sec: add estimates}.

\paragraph{Notation.} 
Given $T>0$ and $\Omega\subset\R^d$, we denote $Q_T:= \R^d\times(0,T), \Omega_T:= \Omega\times(0,T)$. We frequently use the abbreviated forms $n(t):=n(y,x,t),\, n(y):=n(y,x,t),\, \varrho(t):=\varrho(x,t)$.
Given a function $f$, we denote 
	\begin{equation*}
	\sign_+(f)=\mathds{1}_{\{f>0\}}\quad \text{ and } \quad \sign_-(f)=-\mathds{1}_{\{f<0\}}.
	\end{equation*}
	We also define the positive and negative part of $f$ as follows
	\begin{equation*}
	(f)_+ :=
	\begin{cases}
	f, &\text{ for } f>0,\\
	0, &\text{ for } f\leq 0,
	\end{cases}
	\quad  \text{ and }\quad
	(f)_- :=
	\begin{cases}
	-f, &\text{ for } f<0,\\
	0, &\text{ for } f\geq 0.
	\end{cases}
	\end{equation*}

\section{Assumptions and main results}

Now let us state the main results, \ie the existence of weak solutions to System~\eqref{eq: prob pheno}, the incompressible limit and the additional regularity estimates, and for each of them the related assumptions.

\subsection{Existence of weak solutions} 
\paragraph{Assumptions on the reaction term.}
The function $R(y,p)$ is assumed to be smooth and bounded. Moreover, since the pressure induces an inhibitory effect on cell proliferation, we suppose there exists a positive constant $p_M$ representing the \textit{homeostatic pressure}, such that
\begin{equation}\label{ass R_p}
 \partial_p R(\cdot,\cdot) \leq 0, \quad R(\cdot,0)> 0, \quad R(\cdot,p_M)\leq 0.
\end{equation}

\paragraph{Assumptions on the initial data.} 

Let us remind that the density fractions $\sigma(y,x):= n(y,x)/\varrho(x)$ are always well defined almost everywhere if we set $\sigma(y,x):=0$ where $\varrho(x)=0$. However, in order for the density fractions to be well defined everywhere and be always strictly positive, we regularize the initial data as follows $n_{0,\es}(y,x) = n_0(y,x) + \es e^{-|x|^2},$ \ie $\varrho_{0,\es}(x) = \varrho_0(x) + \es e^{-|x|^2},$ and  $p_{0,\es}=(\varrho_{0,\es})^\gamma.$

We say that the initial data are well-prepared if they satisfy the following assumptions: there exists $0<\es_0<1$ and $C$ independent of $\es$, such that for all $0<\es\leq \es_0$ the following holds
\begin{equation}\label{ass: well prepared id}
 0\leq \varrho_{0,\es}\leq (p_M)^{1/\gamma} \text{ a.e. in } \R^d, \qquad \left\|\sup_{y\in[0,1]}\frac{n_{0,\es}(y)}{\varrho_{0,\es}}\right\|_{L^\infty({\R^d})} \leq C,  \quad \left\|\varrho_{0,\es} |x|^2\right\|_{L^1(\R^d)}\leq C.
\end{equation}

\noindent
To show the existence of weak solutions, we extend the method developed in  \cite{price2020global} to the structured case and we prove the following result.

\begin{theorem}\label{thm: existence pheno}
Given $n_0\in L_+^\infty([0,1]\times \R^d)\cap L^1([0,1]\times \R^d)$ that satisfies Assumption~\eqref{ass: well prepared id}, there exists a weak solution to System~\eqref{eq: prob pheno}, namely, for all $T>0$ there exists $n(y,x,t)\in L_+^\infty([0,1]\times\R^d\times(0,T))\cap L^1([0,1]\times \R^d\times(0,T))$ such that $\nabla p(x,t)\in L^2(\R^d\times(0,T))$ and for all $\varphi \in C([0,1]; C_{c}^1([0,T)\times\R^d))$
 \begin{equation*}
 \begin{split}
      &-\int_0^1 \!\int_{\R^d} n(y,x,t)\partialt{\varphi(y,x,t)} \dx{x}\dx{y}+ \int_0^1\!\int_0^T\!\int_{\R^d} n(y,x,t) \nabla p(x,t)\cdot  \nabla\varphi(y,x,t)\dx{x}\dx{t}\dx{y} \\
      &\qquad
      = \int_0^1\!\int_0^T\!\int_{\R^d} n(y,x,t) R(y,p(x,t)) \varphi(y,x,t) \dx{x}\dx{t}\dx{y} + \int_0^1\!\int_{\R^d} n_0(y,x)\varphi(y,x,0) \dx{x}\dx{y}, 
 \end{split}
 \end{equation*}
with
\begin{equation*}
    \varrho(x,t)=\int_0^1 n(y,x,t) \dx{y},\;\; p(x,t)=\prt*{\varrho(x,t)}^\gamma, \;\; 0\leq p \leq p_M.
\end{equation*} 
\end{theorem}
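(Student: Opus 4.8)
The plan is to build the weak solution as a limit of solutions to a uniformly parabolic regularisation, adapting the strategy of \cite{price2020global} to the continuously structured (``infinitely-many-species'') setting. I would add a viscosity term $\es\Delta n$ to the first equation of \eqref{eq: prob pheno} and start from the strictly positive, smooth data $n_{0,\es}=n_0+\es e^{-|x|^2}$ fixed before \eqref{ass: well prepared id}. For each fixed $\es>0$ the regularised system is a nondegenerate coupled parabolic problem, so existence of a smooth solution $n_\es$—together with nonnegativity, obtained by applying the maximum principle fibre-by-fibre in $y$ to the linear advection–diffusion–reaction equation solved by each $n_\es(y,\cdot)$—follows from standard parabolic theory via a Schauder fixed-point argument (given $p$, solve for each $n_\es(y,\cdot)$, reconstruct $\varrho_\es=\int_0^1 n_\es\,\dx{y}$ and $p_\es=\varrho_\es^{\gamma}$, and iterate). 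All the work then lies in producing $\es$-uniform estimates.

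I would first collect the uniform bounds. The $L^\infty$-bound $0\le p_\es\le p_M$ comes from comparison on the scalar porous medium–reaction equation \eqref{eq: density rho} solved by $\varrho_\es$: since $R(\cdot,p_M)\le0$ forces the total rate $\curlyR=\int_0^1\sigma R(y,p)\,\dx{y}$ to be nonpositive on $\{p=p_M\}$, the constant $(p_M)^{1/\gamma}$ is a supersolution, and the well-prepared datum satisfies $\varrho_{0,\es}\le(p_M)^{1/\gamma}$. Mass control $\|n_\es(t)\|_{L^1}\le e^{Ct}\|n_{0,\es}\|_{L^1}$ follows from integrating the equation and Grönwall (with $R$ bounded), and the second moment $\|\varrho_\es|x|^2\|_{L^1}$ propagates from \eqref{ass: well prepared id} to provide tightness on $\R^d$. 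The decisive estimate is obtained by testing \eqref{eq: density rho} against $\varrho_\es^{\gamma+1}$, which after dropping the favourably signed viscosity dissipation gives
$$\frac{1}{\gamma+2}\ddt\int_{\R^d}\varrho_\es^{\gamma+2}\,\dx{x}+\frac{\gamma}{\gamma+1}\int_{\R^d}|\nabla\varrho_\es^{\gamma+1}|^2\,\dx{x}\le\int_{\R^d}\varrho_\es^{\gamma+2}\,\curlyR\,\dx{x},$$
and hence, using the $L^\infty$ and mass bounds on the right, a uniform bound on $\nabla\varrho_\es^{\gamma+1}$ in $L^2(Q_T)$ (equivalently on the flux $\varrho_\es\nabla p_\es=\tfrac{\gamma}{\gamma+1}\nabla\varrho_\es^{\gamma+1}$). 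Together with a bound on $\partial_t\varrho_\es$ in a negative-order space read off from the equation, an Aubin–Lions argument yields strong convergence $\varrho_\es^{\gamma+1}\to\varrho^{\gamma+1}$, hence $\varrho_\es\to\varrho$, in $L^q_{\mathrm{loc}}(Q_T)$; meanwhile $n_\es\rightharpoonup n$ and $\sigma_\es=n_\es/\varrho_\es\rightharpoonup\sigma$ weak-$\ast$ in $L^\infty$.

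The crux, and the reason for working with $\varrho^{\gamma+1}$ rather than the pressure $p=\varrho^{\gamma}$, is the following two-step passage to the limit that \emph{avoids any Aronson–Bénilan second-order estimate}. First, the scalar equation \eqref{eq: density rho} has flux $\tfrac{\gamma}{\gamma+1}\nabla\varrho_\es^{\gamma+1}$, which is \emph{linear} in the strongly convergent unknown $\varrho_\es^{\gamma+1}$; since moreover its source $\varrho_\es\curlyR_\es=\int_0^1 n_\es R(y,p_\es)\,\dx{y}$ converges (weak-$\ast$ $n_\es$ against the strongly convergent $R(y,p_\es)$), the weak convergence $\nabla\varrho_\es^{\gamma+1}\rightharpoonup\nabla\varrho^{\gamma+1}$ in $L^2$ is enough to pass to the limit in the $\varrho$-equation and identify the limit as \eqref{eq: density rho}. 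Testing this limit equation against $\varrho^{\gamma+1}$ produces the limiting energy identity; comparing it with the energy inequality for $\varrho_\es$, and using the strong convergence of the $\varrho_\es^{\gamma+2}$ and reaction terms, forces $\limsup_{\es}\int_{Q_T}|\nabla\varrho_\es^{\gamma+1}|^2\le\int_{Q_T}|\nabla\varrho^{\gamma+1}|^2$, which with weak lower semicontinuity upgrades the weak convergence to strong convergence of $\nabla\varrho_\es^{\gamma+1}$ in $L^2(Q_T)$.

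With this strong convergence in hand, I can finally treat the genuinely nonlinear cross-diffusion term of the $n$-equation, which I rewrite as $n_\es\nabla p_\es=\tfrac{\gamma}{\gamma+1}\sigma_\es\nabla\varrho_\es^{\gamma+1}$: the product of the weak-$\ast$ convergent bounded fractions $\sigma_\es$ and the now strongly convergent $\nabla\varrho_\es^{\gamma+1}$ converges weakly to $\sigma\nabla\varrho^{\gamma+1}$. \textbf{The main obstacle} is precisely this step—weak-$\ast$ convergence of $\sigma_\es$ alone would not pass to the limit in a nonlinear product, so the strong gradient compactness above is essential—together with the identification of the limit on the set $\{\varrho=0\}$ (the internal layers / incipient free boundary), where $\sigma=n/\varrho$ is defined only by the convention $\sigma=0$; there I would use that $\nabla\varrho^{\gamma+1}=0$ a.e. on $\{\varrho=0\}$ to render the ambiguity harmless. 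The remaining terms of the weak formulation (the time-derivative term against $\partial_t\varphi$, the reaction term $\int n_\es R(y,p_\es)\varphi$, and the initial datum $n_{0,\es}\to n_0$) pass to the limit routinely using weak-$\ast$ convergence of $n_\es$ and strong convergence of $\varrho_\es$, recovering exactly the weak formulation of Theorem~\ref{thm: existence pheno} together with $\varrho=\int_0^1 n\,\dx{y}$, $p=\varrho^{\gamma}$ and $0\le p\le p_M$.
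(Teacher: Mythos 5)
Your proposal is correct and follows the same overall architecture as the paper (viscous regularisation, $\es$-uniform bounds, strong $L^2$-compactness of $\nabla\varrho_\es^{\gamma+1}$, and the weak$^*$--strong product $\sigma_\es\nabla\varrho_\es^{\gamma+1}\rightharpoonup\sigma\nabla\varrho^{\gamma+1}$ for the cross-diffusion term), but you reach the two key estimates by a genuinely different and arguably more streamlined route. For the gradient bound, you test the $\varrho_\es$-equation against $\varrho_\es^{\gamma+1}$, getting $\nabla\varrho_\es^{\gamma+1}\in L^2(Q_T)$ in one line; the paper instead multiplies the $n_\es$-equation by $\ln(n_\es+\nu)$ (Lemma~\ref{lemma: entropy}), which yields the weaker-looking bound on $\nabla\varrho_\es^{(\gamma+1)/2}$ but also the extra dissipation $\es\iint\int_0^1|\nabla\sqrt{n_\es}|^2$. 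For the strong convergence of $\nabla\varrho_\es^{\gamma+1}$, you compare the energy identities of the $\es$-problem and of the limit equation and invoke weak lower semicontinuity (both of the Dirichlet term and of $\int\varrho_\es^{\gamma+2}(T)$ at the endpoint), dropping the viscosity term by its favourable sign; the paper instead subtracts the two equations and tests against $\varrho_\es^{\gamma+1}-\varrho^{\gamma+1}$ (Lemma~\ref{thm: strong convergence grad}), where the viscosity cross term has no sign and must be killed using precisely the $\sqrt{\es}\,\nabla\sqrt{\varrho_\es}$ bound from the entropy estimate, and where the time term is handled via the endpoint convergence $\int\varrho_\es^{\gamma+2}(T)\to\int\varrho^{\gamma+2}(T)$ established by the separate (and fairly heavy) Lemma~\ref{lemma: conv integrals} testing against $\partial_t w_\es$. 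Your version trades that lemma for a one-sided $\liminf$ at $t=T$, which suffices because the endpoint term sits on the good side of the inequality; the price is that you must justify the chain rule $\int_0^T\langle\partial_t\varrho,\varrho^{\gamma+1}\rangle=\frac{1}{\gamma+2}\big[\int\varrho^{\gamma+2}\big]_0^T$ for the merely weak limit solution, a standard but nontrivial point that the paper's computation also relies on.

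One ingredient you take for granted deserves to be made explicit: the weak$^*$ convergence of $\sigma_\es=n_\es/\varrho_\es$ in $L^\infty$ presupposes both that $\varrho_\es(\cdot,t)>0$ for all $t$ (the paper proves this by comparison with the Gaussian subsolution $\es e^{-Kt}e^{-|x|^2}$) and that $\sigma_\es$ is bounded in $L^\infty$ \emph{uniformly in} $\es$, which the paper derives from the transport-type equation \eqref{eq: fraction density} satisfied by $\sigma_\es$ together with the second condition in Assumption~\eqref{ass: well prepared id}. Without this uniform bound the weak$^*$--strong product argument for $\sigma_\es\nabla\varrho_\es^{\gamma+1}$, which is the crux of your (and the paper's) limit passage, does not close.
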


\subsection{Incompressible limit}
In order to pass to the incompressible limit the more involved part is to find compactness of the pressure gradient. Our approach consists in extending and adapting the methods developed in \cite{LX2021} to our problem, namely focusing on the quantity $v_\gamma=\varrho_\gamma^{\gamma+1}$. 

Unlike \cite{LX2021}, we consider nonlinear pressure-dependent reaction terms. Consequently, our treatment of this term is different, and involves compensated compactness results and the monotonicity of $R$ with respect to $p$. Moreover, we need to assume that the solutions are compactly supported (uniformly in $\gamma$). Indeed, outside of this class of solutions we are not able to show the strong compactness of the pressure which is necessary in order to pass to the limit in the reaction terms. The problem then reduces to a boundary valued problem with Dirichlet homogeneous conditions, while in \cite{LX2021} the authors choose Neumann homogeneous conditions on the boundary. 

\bigskip

\noindent
\textbf{Assumptions on the initial data.}
We assume
$n_{\gamma,0}\in L^\infty([0,1]\times \R^d), \varrho_{\gamma,0}\in L_+^1(\R^d)\cap L^\infty(\R^d),$
and that there exists $\Omega_0\subset\R^d$ such that
$$ \supp\prt*{n_{\gamma,0}(y)}\subset \Omega_0, \ \text{for a.e. } y\in [0,1], \forall \gamma>1.$$
Thanks to the finite speed of propagation of porous medium type equations, we can reduce the problem to the case of a bounded domain $\Omega\subset\R^d$, on which we have homogeneous Dirichlet boundary conditions, $\varrho_\gamma(x,t)=0$, for almost every $(x,t)$ on $\partial\Omega \times [0,T]$. 
Since $\varrho_{\gamma,0}$ is compactly supported, then for all $T>0$ there exists $\Omega\subset\R^d$ such that
\begin{equation*}
    \supp{\varrho_\gamma(t)}\subset \Omega, \qquad \forall t\in[0,T], \ \forall \gamma>1.
\end{equation*} 
Moreover, we assume there exists $\varrho_0, p_0\in L_+^\infty(\Omega)$ such that
$$\|\varrho_{\gamma,0}-\varrho_0\|_{L^1(\Omega)}\to 0 \qquad \|p_{\gamma,0}-p_0\|_{L^1(\Omega)}\to 0 $$
and 
$$0\leq\varrho_{\gamma,0}\leq (p_M)^{1/\gamma}, \qquad 0\leq p_{\gamma,0}\leq p_M.$$

\bigskip
\noindent
Let us denote $v_\gamma = \varrho_\gamma^{\gamma+1}$. We can rewrite Eq.~\eqref{eq: density rho} as follows
\begin{equation}\label{eq: new density}
      \partialt{\varrho_\gamma} - \frac{\gamma}{\gamma+1}\Delta v_\gamma = \int_0^1 n_\gamma R(y,p_\gamma) \dx{y}.
\end{equation}
We can pass to the incompressible limit $\gamma \to \infty$ and recover a Hele-Shaw problem, as stated in the following theorems.
\bigskip

\begin{theorem}[Hele-Shaw weak limit]\label{thm: limit problem}
Let $(n_\gamma, \varrho_\gamma, p_\gamma)$ be a solution given by Theorem~\ref{thm: existence pheno} whose initial data satisfies the assumptions stated above. For all $T>0$, up to the extraction of a subsequence we have
  \begin{align} 
\label{convergence of n} n_\gamma(y,x,t) &\rightharpoonup n_\infty(y,x,t) & &\hspace{-3cm}  \text{ weakly$^*$ in } L^\infty((0,1)\times \Omega_T),\\[0.3em]
\label{convergence of varrho}\varrho_\gamma(x,t) &\rightharpoonup \varrho_\infty(x,t) & &\hspace{-3cm}\text{ weakly$^*$ in } L^\infty(\Omega_T),\\[0.3em]
\label{convergence of p}p_\gamma(x,t) &\rightharpoonup p_\infty(x,t) & &\hspace{-3cm} \text{ weakly$^*$ in } L^\infty(\Omega_T),\\[0.3em]
\label{convergence of nabla v} \nabla v_\gamma  &\rightharpoonup \nabla v_\infty & &\hspace{-3cm}\text{ weakly in } L^2(\OmegaT), 
\end{align}
as $\gamma\to\infty$. Moreover, the limit satisfies $v_\infty=p_\infty$, also stated as follows
\begin{equation}\label{saturation relation pheno}
    p_\infty(1-\varrho_\infty) = 0 \qquad \text{ almost everywhere in } \OmegaT,
\end{equation} as well as 
\begin{align}\label{limit weak eq for varrho}
    \partialt{\varrho_\infty} = \Delta p_\infty +  {\int_0^1 n_\infty R(y,p_\infty)\dx{y}},  \text{ in }\quad \curlyD'(\R^d \times (0,\infty)).
\end{align}
\end{theorem}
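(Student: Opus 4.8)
The plan is to obtain the stated convergences from uniform a priori estimates and then identify the limit equation and the complementarity relation, treating the pressure-dependent reaction term by compensated compactness. First I would collect the uniform-in-$\gamma$ bounds: from $0\le \varrho_\gamma \le (p_M)^{1/\gamma}$ and $0\le p_\gamma \le p_M$ we get boundedness of $n_\gamma, \varrho_\gamma, p_\gamma$ in $L^\infty(\Omega_T)$, so after passing to a subsequence the weak-$*$ limits \eqref{convergence of n}--\eqref{convergence of p} follow immediately by Banach--Alaoglu. The key quantitative input is the $L^2$-bound on $\nabla v_\gamma$ provided by the energy/dissipation estimate (the analogue of the one used for $\varrho_\gamma^{\gamma+1}$ in \cite{LX2021}); this yields \eqref{convergence of nabla v} and, together with the uniform $L^\infty$-bound on $v_\gamma = \varrho_\gamma^{\gamma+1}$, gives $v_\gamma \rightharpoonup v_\infty$ weakly in $L^2$ with $\nabla v_\infty \in L^2$.

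\medskip

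Next I would establish the saturation relation \eqref{saturation relation pheno}. The standard argument is that $v_\gamma = \varrho_\gamma \cdot \varrho_\gamma^{\gamma} = \varrho_\gamma\, p_\gamma$, so since $\varrho_\gamma \le (p_M)^{1/\gamma} \to 1$ on the support and $p_\gamma \ge 0$, one shows $p_\gamma(\varrho_\gamma - 1) \to 0$; more precisely, from $0 \le p_\gamma(1-\varrho_\gamma)$ and an integral estimate controlling $\int p_\gamma(1-\varrho_\gamma)$ by a negative power of $\gamma$ (using $\varrho_\gamma^{\gamma+1} = \varrho_\gamma p_\gamma$ and $\varrho_\gamma \le (p_M)^{1/\gamma}$), one passes to the limit to obtain $p_\infty(1-\varrho_\infty)=0$ a.e., which is equivalent to $v_\infty = p_\infty$ once $v_\gamma$ and $p_\gamma$ are shown to share the same weak limit. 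Passing to the limit in the \emph{linear} terms of the weak formulation of \eqref{eq: new density} is routine: the time-derivative term and the $\frac{\gamma}{\gamma+1}\Delta v_\gamma$ term converge against a test function $\varphi \in C_c^\infty(\R^d\times(0,\infty))$ because $\frac{\gamma}{\gamma+1}\to 1$, $\nabla v_\gamma \rightharpoonup \nabla v_\infty = \nabla p_\infty$ weakly in $L^2$, and $\varrho_\gamma \rightharpoonup \varrho_\infty$ weak-$*$.

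\medskip

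The hard part will be passing to the limit in the reaction term $\int_0^1 n_\gamma R(y,p_\gamma)\,\dx{y}$, since this is a \emph{product} of two only weakly-converging sequences, $n_\gamma \rightharpoonup n_\infty$ and $R(y,p_\gamma)$, and weak times weak does not pass to the product in general. This is precisely where the restriction to compactly supported solutions and the monotonicity $\partial_p R \le 0$ enter. My plan is to upgrade the convergence of $p_\gamma$ to \emph{strong} convergence in $L^1(\Omega_T)$ (or a.e.), which then makes $R(y,p_\gamma) \to R(y,p_\infty)$ strongly by dominated convergence and the smoothness/boundedness of $R$, so that the product $n_\gamma R(y,p_\gamma) \rightharpoonup n_\infty R(y,p_\infty)$ converges (weak-$*$ times strong). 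To get strong compactness of $p_\gamma$ without a uniform bound on $\partial_t p_\gamma$, I would use a compensated-compactness / monotonicity argument: exploiting the Aronson--B\'enilan-type time regularity together with the monotone structure $\partial_p R \le 0$, one shows $\varrho_\gamma$ (hence $p_\gamma$) converges strongly, for instance via an estimate of the form $\int (p_\gamma - p_\infty)(\varrho_\gamma - \varrho_\infty)$ controlled by the dissipation, combined with the sign relation between $p_\gamma$ and $\varrho_\gamma$; this is the step I expect to require the compact-support hypothesis and the most care.

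\medskip

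Once strong convergence of $p_\gamma$ is in hand, I would assemble the pieces: substitute $v_\infty = p_\infty$ into the limiting weak identity to obtain \eqref{limit weak eq for varrho} in $\curlyD'(\R^d\times(0,\infty))$, noting that although the estimates are derived on the bounded domain $\Omega$, the finite-speed-of-propagation property lets us extend the statement to all of $\R^d$ since everything vanishes outside a fixed compact set. I would also remark that the identification $\nabla v_\infty = \nabla p_\infty$ is consistent because $v_\infty = p_\infty$ as $L^2$ functions with $L^2$ gradients.
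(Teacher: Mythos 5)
Your overall skeleton matches the paper's: uniform $L^\infty$ bounds give \eqref{convergence of n}--\eqref{convergence of p}, the dissipation identity for $p_\gamma$ gives the uniform $L^2$ bound on $\nabla v_\gamma = \frac{\gamma+1}{\gamma}\varrho_\gamma\nabla p_\gamma$, and the linear terms pass to the limit routinely. However, your treatment of the saturation relation \eqref{saturation relation pheno} has a genuine gap. It is true that $p_\gamma - v_\gamma = \varrho_\gamma^\gamma(1-\varrho_\gamma)\to 0$ uniformly (so $v_\infty=p_\infty$ as weak limits), but this does \emph{not} by itself yield $p_\infty(1-\varrho_\infty)=0$: the quantity $p_\gamma\varrho_\gamma$ is a product of two merely weak$^*$-convergent sequences, so its weak limit need not be $p_\infty\varrho_\infty$, and "passing to the limit" in $\int p_\gamma(1-\varrho_\gamma)$ identifies nothing about $p_\infty(1-\varrho_\infty)$. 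The paper closes exactly this step with the compensated compactness lemma (Theorem~\ref{thm: comp comp}), applicable because $\partial_t\varrho_\gamma\in L^2(0,T;H^{-1}(\Omega))$ by \eqref{eq: dt rho H-1} and $p_\gamma\in L^2(0,T;H^1(\Omega))$ by \eqref{eq: grad p in L2}; this gives $v_\infty=\varrho_\infty p_\infty$, which combined with the convexity inequality $v_\infty=\lim p_\gamma^{(\gamma+1)/\gamma}\ge p_\infty$ and $\varrho_\infty\le 1$ yields \eqref{saturation relation pheno}. (Also, your pointwise claim $0\le p_\gamma(1-\varrho_\gamma)$ is false when $p_M>1$, since then $\varrho_\gamma$ may exceed $1$.)

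For the reaction term you invoke ``Aronson--B\'enilan-type time regularity'' to upgrade $p_\gamma$ to strong convergence, but that is precisely the tool that is \emph{unavailable} here: the paper emphasizes that no control on $\partial_t p_\gamma$ or on $\Delta p_\gamma$ is known in the structured/multi-species setting, and the whole point of working with $v_\gamma$ is to bypass it. The actual mechanism (Lemma~\ref{lemma: strong convergence grad vgamma}) tests the equation for $\varrho_\gamma$ against $v_\gamma-v_\infty$, splits the reaction term using the monotonicity $\partial_p R\le 0$ and the identity $v_\gamma=p_\gamma\varrho_\gamma$, invokes compensated compactness again, and shows $\limsup_\gamma\iint_{\OmegaT}|\nabla(v_\gamma-v_\infty)|^2\le\iint_{\OmegaT}\partial_t\varrho_\infty\, v_\infty=0$ (the last equality via a time-difference argument using \eqref{saturation relation pheno}); strong $L^2$ compactness of $p_\gamma$ then follows from Poincar\'e on the compactly supported solutions. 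Note, finally, that the paper does not complete the identification of the reaction term inside the proof of Theorem~\ref{thm: limit problem}: it only records $n_\gamma R(y,p_\gamma)\rightharpoonup\curlyH_\infty$ as in \eqref{conv curly H} and defers $\curlyH_\infty=n_\infty R(y,p_\infty)$ to Eq.~\eqref{eq: H}, after the strong compactness of $\nabla v_\gamma$ is in hand; your proposal should either adopt that ordering or supply the strong compactness argument in full.
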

\noindent 
In order to pass to the limit in the equations for $n_\gamma$ and $p_\gamma$ we need to prove the strong compactness of $\nabla v_\gamma$ in $L^2(\OmegaT)$, see Lemma~\ref{lemma: strong convergence grad vgamma}. 
\begin{theorem}[Complementarity relation]\label{thm: complem relation}
The limit solution $\varrho_\infty, p_\infty$ satisfies 
\begin{equation} 
 \label{eq: p infty pheno}
    p_\infty \prt*{\Delta p_\infty + {\int_0^1 n_\infty R(y,p_\infty)\dx{y}}} = 0, \qquad \text{ in } \; \curlyD'(\R^d\times(0,\infty)).
\end{equation} 
as well as
\begin{equation} \label{limit weak equation for n(y)}
\begin{split}
-\int_0^1\iint_\OmegaT n_\infty &\partialt \varphi \dx{x} \dx{t} \dx{y}+  \int_0^1\iint_\OmegaT n_\infty \nabla p_\infty \cdot \nabla \varphi  \dx{x} \dx{t} \dx{y}\\[0.3em]
    &= \int_0^1\iint_\OmegaT n_\infty R(y,p_\infty)  \varphi\dx{x}\dx{t} \dx{y}  +\int_\Omega n_0 (y,x) \varphi(y,x,0)\dx{x}\dx{y}, 
\end{split}
\end{equation}
for all test functions $\varphi\in C((0,1); C_{comp}^1([0,T)\times\Omega))$.
\end{theorem}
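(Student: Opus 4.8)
The plan is to pass to the limit $\gamma\to\infty$ in the two weak formulations available at the compressible level — the weak equation for $n_\gamma$ from Theorem~\ref{thm: existence pheno}, and a suitably rescaled equation for $v_\gamma=\varrho_\gamma^{\gamma+1}$ derived from \eqref{eq: new density} — using the convergences of Theorem~\ref{thm: limit problem}, the strong $L^2$ precompactness of $\nabla v_\gamma$ (Lemma~\ref{lemma: strong convergence grad vgamma}), and the strong compactness of $p_\gamma$ that holds for compactly supported solutions. The guiding principle throughout is to replace $\nabla p_\gamma$ by $\nabla v_\gamma$ wherever a strongly convergent factor is required, so that every product to be passed to the limit pairs a weakly convergent with a strongly convergent term.

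For the weak equation \eqref{limit weak equation for n(y)} I would treat each term separately. The time-derivative and initial-data terms pass to the limit directly from \eqref{convergence of n} and the assumed convergence of the initial data, while the reaction term is handled by noting that $p_\gamma\to p_\infty$ strongly forces $R(y,p_\gamma)\to R(y,p_\infty)$ strongly (by continuity and boundedness of $R$), which combined with the weak-$*$ convergence of $n_\gamma$ lets me pass to the limit in $n_\gamma R(y,p_\gamma)$. The delicate term is the cross-diffusion flux; using $\varrho_\gamma\nabla p_\gamma=\tfrac{\gamma}{\gamma+1}\nabla v_\gamma$ I rewrite
\begin{equation*}
 n_\gamma\,\nabla p_\gamma=\frac{\gamma}{\gamma+1}\,\sigma_\gamma\,\nabla v_\gamma,\qquad \sigma_\gamma=\frac{n_\gamma}{\varrho_\gamma},
\end{equation*}
and exploit that $\sigma_\gamma$ is bounded in $L^\infty$ (a bound propagated from the initial data) while $\nabla v_\gamma\to\nabla v_\infty=\nabla p_\infty$ strongly in $L^2$, so the product converges to $\sigma_\infty\nabla p_\infty$ for a weak-$*$ limit $\sigma_\infty$ of $\sigma_\gamma$. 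To identify $\sigma_\infty\nabla p_\infty$ with $n_\infty\nabla p_\infty$ I use that $\nabla p_\infty=0$ a.e. on $\{p_\infty=0\}$ (as $p_\infty\in H^1$), so $\nabla p_\infty$ is carried by $\{p_\infty>0\}$; there the saturation relation \eqref{saturation relation pheno} forces $\varrho_\infty=1$, and strong convergence of $p_\gamma$ gives $\varrho_\gamma=p_\gamma^{1/\gamma}\to 1$, whence $n_\infty=\sigma_\infty$ a.e. on $\{p_\infty>0\}$. This yields $\sigma_\infty\nabla p_\infty=n_\infty\nabla p_\infty$ and closes the cross-diffusion term.

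For the complementarity relation \eqref{eq: p infty pheno} I would start from the identity $\partial_t v_\gamma=(\gamma+1)p_\gamma\,\partial_t\varrho_\gamma$, which together with \eqref{eq: new density} gives, after division by $\gamma$,
\begin{equation*}
 \frac1\gamma\,\partial_t v_\gamma=p_\gamma\,\Delta v_\gamma+\frac{\gamma+1}{\gamma}\,p_\gamma\int_0^1 n_\gamma R(y,p_\gamma)\dx{y}.
\end{equation*}
Testing against $\psi\in C_c^\infty(\R^d\times(0,\infty))$, the left-hand side tends to $0$ since $v_\gamma$ is bounded in $L^\infty$, and the reaction term converges to $p_\infty\int_0^1 n_\infty R(y,p_\infty)\dx{y}$ by the weak-strong argument above. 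For the critical term I integrate by parts twice,
\begin{equation*}
 \iint_\OmegaT p_\gamma\,\Delta v_\gamma\,\psi\dx{x}\dx{t}=-\iint_\OmegaT \nabla v_\gamma\cdot\nabla p_\gamma\,\psi\dx{x}\dx{t}-\iint_\OmegaT p_\gamma\,\nabla v_\gamma\cdot\nabla\psi\dx{x}\dx{t},
\end{equation*}
and here the reformulation pays off: $\nabla v_\gamma\cdot\nabla p_\gamma\rightharpoonup|\nabla p_\infty|^2$ as the pairing of a strongly convergent ($\nabla v_\gamma$) and a weakly convergent ($\nabla p_\gamma$) factor, while $p_\gamma\nabla v_\gamma\to p_\infty\nabla p_\infty$ strongly. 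Recombining and using $p_\infty\Delta p_\infty=\nabla\cdot(p_\infty\nabla p_\infty)-|\nabla p_\infty|^2$, the right-hand side converges to $\iint_\OmegaT p_\infty\Delta p_\infty\,\psi$, and together with the reaction limit this produces \eqref{eq: p infty pheno} in $\curlyD'$.

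The main obstacle is precisely the passage to the limit in the quadratic and cross terms $n_\gamma\nabla p_\gamma$ and $p_\gamma\Delta v_\gamma$, since a naive treatment confronts the product of two merely weakly convergent gradients. The argument hinges entirely on the strong $L^2$ precompactness of $\nabla v_\gamma$, which supplies the strongly convergent factor needed in both places, and on the strong compactness of $p_\gamma$ — both to pass to the limit in $R(y,p_\gamma)$ and to identify the weak limits on $\{p_\infty>0\}$ via the saturation relation. It is this last step, available only in the compactly supported setting, that upgrades the construction beyond \cite{LX2021} to accommodate the pressure-dependent reaction.
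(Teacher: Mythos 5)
Your proposal is correct and follows essentially the same route as the paper: both passages to the limit rest on the strong $L^2$ compactness of $\nabla v_\gamma$ (Lemma~\ref{lemma: strong convergence grad vgamma}) and the consequent strong compactness of $p_\gamma$, with every problematic product arranged as a weak--strong pairing. The only cosmetic differences are that for the complementarity relation the paper multiplies Eq.~\eqref{eq: new density} by $v_\gamma$ rather than by $p_\gamma$ (so all factors converge strongly, avoiding the need for the weak convergence of $\nabla p_\gamma$), and that your identification of $\sigma_\infty\nabla p_\infty$ with $n_\infty\nabla p_\infty$ via the saturation relation spells out a step the paper leaves implicit.
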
   

\noindent
Relation \eqref{saturation relation pheno} implies that the total limit density $\varrho_\infty$ is saturated in the positivity set of the pressure $\Omega(t):=\{x; \ p_\infty(x,t)>0\},$ which can be seen as the region occupied by the tumor.
Moreover, the \textit{complementarity relation} \eqref{eq: p infty pheno} tells us that in $\Omega(t)$ the limit pressure satisfies an elliptic equation, which is usually referred to as a Hele-Shaw free boundary problem.

\subsection{Additional regularity}
\label{subsec: regularity}
The last part of the paper concerns additional regularity estimates on the pressure gradient, therefore we focus on $p$ rather than $\varrho^{\gamma+1}$.  
In particular, we extend a result already proved in \cite{MePeQu} for a Hele-Shaw model of one species, which implies that $p^{\alpha-1} |\nabla p|^4$ is integrable, for certain values of $\alpha$. However, unlike \cite{MePeQu}, our bound does not rely on any uniform control of $\Delta p$, and allows us to prove a new bound in $L^4_{x,t}$ on the pressure gradient which is uniform in $\gamma$. The proof relies on controlling uniformly the $L^2$-norm of $p D^2 p$. Let us recall that $\Delta p$ is only a measure on the free boundary of $\{p >0\}$. Therefore, the control on the second derivatives is "weighted" by a factor $p$. This control has been recently applied to prove the existence of Lagrangian solutions for systems of porous medium equations \cite{jacobs2}. We also refer the reader to \cite{AB2020} for the derivation of a similar bound for some fluid dynamics models.

\paragraph{Additional assumptions.} In order to prove the following regularity results on the pressure, it is necessary to make stronger assumptions on the initial data. In particular, we assume 
 \begin{itemize}
    \setlength\itemsep{1em}
    \item[({\crtcrossreflabel{A1}[hyp:A1]})] $\gamma>3/2$,
    \item[({\crtcrossreflabel{A2}[hyp:A2]})] $\nabla p_{\gamma,0} \in L^2(\Omega)$ uniformly in $\gamma$.
\end{itemize}
\begin{theorem}\label{thm: L4 pheno}
Under assumptions (\ref{hyp:A1}-\ref{hyp:A2}), for all $T>0$ there exists a positive constant $C(T)$ independent of $\gamma$ such that for any $0\leq \alpha <\frac{1}{\gamma}$ the following estimate holds
\begin{equation*}
    \kappa(\alpha) \iint_{\Omega_T} \frac{|\nabla p|^4}{p^{1-\alpha}} \dx{x}\dx{t} \leq C(T),
\end{equation*}
with $\kappa(\alpha):= \frac{\alpha}{6} \prt*{1-\alpha \gamma}$.
\end{theorem}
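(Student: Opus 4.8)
The plan is to work at the level of the pressure equation. Writing $p=\varrho^\gamma$ and using \eqref{eq: density rho}, the pressure satisfies
\begin{equation*}
\partial_t p = \gamma\, p\, \Delta p + |\nabla p|^2 + \gamma\, p\, \mathcal{R},
\end{equation*}
where $\mathcal{R}=\int_0^1\sigma R(y,p)\dx{y}$ is bounded and $\partial_p\mathcal{R}\le 0$ by \eqref{ass R_p}. Since the target functional $\iint p^{\alpha-1}|\nabla p|^4$ is second order, I would first establish every identity on the viscosity-regularised solutions of Section~\ref{sec: preliminaries and reg}, for which $p$ is smooth and strictly positive, derive bounds uniform both in the regularisation parameter and in $\gamma$, and only then pass to the limit; the integrand being convex in $\nabla p$, weak lower semicontinuity (Fatou) preserves the bound. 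Throughout, the compact support of the solutions (finite speed of propagation) together with the homogeneous Dirichlet condition kills every boundary term produced by integrating by parts in $x$.

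The core is a weighted Bernstein-type estimate. Setting $w=|\nabla p|^2$ and differentiating the pressure equation gives
\begin{equation*}
\partial_t w = \gamma\, p\, \Delta w + 2\gamma\, w\, \Delta p - 2\gamma\, p\, |D^2p|^2 + 2\nabla p\cdot\nabla w + 2\gamma\,\mathcal{R}\, w + 2\gamma\, p\, \nabla p\cdot\nabla\mathcal{R}.
\end{equation*}
I would multiply this by the weight $p^{\alpha}$, integrate over $\Omega_T$ (by parts in both $x$ and $t$), and repeatedly invoke the elementary identity
\begin{equation*}
\int_\Omega p^{\alpha}|\nabla p|^2\Delta p\dx{x} = -\alpha\int_\Omega p^{\alpha-1}|\nabla p|^4\dx{x} - 2\int_\Omega p^{\alpha}\,(\nabla p)^{\!\top}D^2p\,\nabla p\dx{x}
\end{equation*}
to turn every $\Delta p$ contribution into the target $\mathcal{A}:=\iint p^{\alpha-1}|\nabla p|^4$ and the Hessian contraction $\mathcal{H}:=\iint p^{\alpha}(\nabla p)^{\!\top}D^2p\,\nabla p$. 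The dissipative term $\mathcal{D}:=\iint p^{\alpha+1}|D^2p|^2$ is exactly the weighted $L^2$-control of $p\,D^2p$ announced in Section~\ref{subsec: regularity}; its initial-data counterpart is controlled by \ref{hyp:A2}, and the estimate is designed to produce simultaneously a bound on $\mathcal{D}$ and on $\mathcal{A}$.

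The main obstacle — and the reason for the precise coefficient $\kappa(\alpha)=\tfrac{\alpha}{6}(1-\alpha\gamma)$ — is the bookkeeping of the terms of size $O(\gamma)$, namely $\gamma p\,\Delta w$ and $2\gamma w\,\Delta p$. A crude bound $|\mathcal{H}|\le\mathcal{A}^{1/2}\mathcal{D}^{1/2}$ is too lossy: the factors multiplying $\mathcal{H}$ grow like $\gamma$ and cannot be absorbed by $\mathcal{D}$ uniformly in $\gamma$. The delicate point is therefore to keep the full Hessian $|D^2p|^2$ (not merely $\Delta p$) and to integrate by parts so that these $O(\gamma)$ contributions cancel against $-2\gamma\mathcal{D}$, leaving on $\mathcal{A}$ a net coefficient of order $\alpha(1-\alpha\gamma)$: indeed, the clean factor $1-\alpha\gamma$ already emerges as the coefficient of $\mathcal{A}$ in $\int p^{\alpha-1}w\,\partial_t p$, the term $|\nabla p|^2$ contributing $+\mathcal{A}$ and $\gamma p\Delta p$ contributing $-\gamma\alpha\mathcal{A}$ via the identity above. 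The hypothesis $0\le\alpha<1/\gamma$ is exactly what makes this residual coefficient positive, while $\gamma>3/2$ (\ref{hyp:A1}) is what guarantees that the remaining $p$-power weights are uniformly integrable, so that $C(T)$ is $\gamma$-independent; the numerical factor $\tfrac16$ is the outcome of the optimised Young parameter in the absorption step. Finally, the reaction contributions $2\gamma\mathcal{R}w$ and $2\gamma p\,\nabla p\cdot\nabla\mathcal{R}$ are genuinely lower order: using $0\le p\le p_M$, the smoothness and boundedness of $R$, and crucially the sign $\partial_p R\le 0$ from \eqref{ass R_p}, they are either nonpositive or bounded by $\mathcal{A}^{1/2}$ times an $L^2$ pressure-gradient bound, hence absorbed after a Grönwall argument into $C(T)$.
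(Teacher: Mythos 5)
Your proposal is essentially the paper's argument: a weighted energy estimate on the pressure equation in which two integrations by parts (equivalently, the Bochner identity you invoke for $w=|\nabla p|^2$) convert the $O(\gamma)$ Laplacian contributions into the coercive Hessian term $\iint_{\Omega_T} p^{\alpha+1}(D^2_{i,j}p)^2$ together with $-\tfrac{\alpha}{3}\iint_{\Omega_T} p^{\alpha-1}|\nabla p|^4$, and the residual coefficient on the target integral is exactly $\kappa(\alpha)=\tfrac{\alpha}{6}(1-\alpha\gamma)$; the paper multiplies the $p$-equation by $-p^{\alpha}(\Delta p+\mathcal{R})$ rather than the $w$-equation by $p^{\alpha}$, but the two computations produce the same identities, and your treatment of the reaction terms (boundedness plus $\partial_p R\le 0$) matches the paper's. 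One small correction: assumption (\ref{hyp:A1}), $\gamma>3/2$, is not about integrability of the $p$-power weights (these are trivially controlled since $0\le p\le p_M$ on the bounded set $\Omega_T$); it is needed so that, after the Young inequality $\iint_{\Omega_T} p^{\alpha+1}|\Delta p|^2\le\tfrac32\iint_{\Omega_T} p^{\alpha+1}|\Delta p+\mathcal{R}|^2+3\iint_{\Omega_T} p^{\alpha+1}|\mathcal{R}|^2$ used to absorb the bad-sign Laplacian term, the coefficient $\gamma-\tfrac32$ in front of the good term $\iint_{\Omega_T} p^{\alpha+1}(\Delta p+\mathcal{R})^2$ remains nonnegative and that term can be discarded.
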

\noindent
Exploiting the bounds obtained in the proof of the above theorem we also establish (uniformly in $\gamma$) an $L^4$-control of $\nabla p$.

 \begin{theorem}\label{remark: L4}
Under assumptions (\ref{hyp:A1}-\ref{hyp:A2}) the following estimates hold uniformly in $\gamma$,
\begin{equation*}
  \iint_{\Omega_T}\!p  (D^2_{i,j} p)^2\dx{x}\dx{t}\leq C(T), \qquad  \iint_{\Omega_T} |\nabla p|^4 \dx{x}\dx{t}
 \leq C(T).
 \end{equation*}
\end{theorem}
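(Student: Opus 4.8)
The plan is to derive both estimates from the single weighted bound already established in Theorem~\ref{thm: L4 pheno}. The key observation is that the first claimed estimate, $\iint_{\Omega_T} p\,(D^2_{i,j}p)^2 \le C(T)$, is not a corollary of the displayed inequality in Theorem~\ref{thm: L4 pheno} but rather a \emph{byproduct of its proof}: the standard route to an $L^4$ gradient bound for degenerate diffusions passes through an Aronson--B\'enilan-type computation in which one differentiates the equation for $p$, tests against a suitable power of $p$, and integrates by parts. In such a computation the term $\iint p\,(D^2 p)^2$ (i.e.\ the weighted second-derivative control, weighted by $p$ precisely because $\Delta p$ is only a measure on the free boundary) appears on the ``good'' side of the energy identity and is absorbed/bounded using assumptions (\ref{hyp:A1}--\ref{hyp:A2}). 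So first I would state that this bound is exactly the quantity controlled during the proof of Theorem~\ref{thm: L4 pheno}, and record it as the starting point.

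Next I would pass from the weighted Hessian bound to the $L^4$ bound on $\nabla p$. The natural mechanism is integration by parts: write
\begin{equation*}
\iint_{\Omega_T} |\nabla p|^4 \dx{x}\dx{t} = \iint_{\Omega_T} |\nabla p|^2 \,\nabla p \cdot \nabla p \dx{x}\dx{t},
\end{equation*}
and integrate by parts in $x$ to move one derivative off the inner $\nabla p$, producing terms of the schematic form $\iint p\,|\nabla p|^2\,D^2 p$ plus boundary contributions that vanish under the homogeneous Dirichlet condition $p=0$ on $\partial\Omega$. I would then apply Cauchy--Schwarz in the form
\begin{equation*}
\iint p\,|\nabla p|^2\,|D^2 p| \le \prt*{\iint p\,(D^2 p)^2}^{1/2}\prt*{\iint p\,|\nabla p|^4}^{1/2},
\end{equation*}
using the uniform $L^\infty$ pressure bound $0\le p\le p_M$ to replace the weighted $L^4$ factor by the ordinary one. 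This yields an inequality of the type $\|\nabla p\|_{L^4}^4 \lesssim C \,\|\nabla p\|_{L^4}^2$, which after absorption gives the clean $L^4$ bound, uniform in $\gamma$.

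The main obstacle will be making the integration-by-parts step rigorous in this degenerate setting. Because $p$ can vanish and $\nabla p$ is only in $L^2$ a priori, one cannot freely integrate by parts on $\nabla|\nabla p|^2 = 2\,D^2 p\,\nabla p$ without justification; the correct approach is to perform all manipulations at the level of the regularised problem (the viscosity approximation introduced in Section~\ref{sec: preliminaries and reg}), where $p_\es$ is smooth and the Hessian is a genuine function, derive the estimates there with constants independent of both $\es$ and $\gamma$, and only then pass to the limit using lower semicontinuity of the norms under weak convergence. I would therefore carry out the chain of inequalities for the approximate solutions, invoke the uniform weighted Hessian bound from the proof of Theorem~\ref{thm: L4 pheno} (which is itself proved at the regularised level), and conclude by Fatou/weak lower semicontinuity that the limiting $p$ inherits both bounds. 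A secondary technical point is confirming that the boundary terms really drop: this follows because the test geometry is a fixed bounded domain $\Omega$ containing $\supp \varrho_\gamma(t)$, so $p$ and its tangential derivatives vanish near $\partial\Omega$ and no boundary flux survives.
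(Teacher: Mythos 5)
Your proposal takes essentially the same route as the paper: the paper obtains $\iint_{\Omega_T} p\,(D^2_{i,j}p)^2\,\dx{x}\dx{t}\leq C(T)$ (together with $\iint_{\Omega_T} p(\Delta p+\mathcal{R})^2\,\dx{x}\dx{t}\leq C(T)$) precisely by setting $\alpha=0$ in the master inequality \eqref{eq: complete estimate for L4} established during the proof of Theorem~\ref{thm: L4 pheno}, and then derives the $L^4$ bound on $\nabla p$ by the same integration-by-parts plus Cauchy--Schwarz absorption you describe. The only cosmetic difference is that the paper first upgrades the weights to $p^2|\Delta p|^2$ and $p^2(D^2_{i,j}p)^2$ via $0\leq p\leq p_M$ before applying Cauchy--Schwarz, which is equivalent to your splitting of the weight $p=p^{1/2}\cdot p^{1/2}$.
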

 
\section{Existence of weak solutions}\label{sec: existence}
\subsection{Regularised problem}\label{sec: preliminaries and reg}
In order to prove the existence of weak solutions of System~\eqref{eq: prob pheno}, we regularise the system introducing a viscosity term. Let $0<\es<\es_0$, and consider the following uniformly parabolic system
  \begin{equation}\label{eq: eq n_es}
    \begin{dcases} \partialt{n_\es} - \nabla \cdot(n_\es \nabla p_\es) - \es \Delta n_\es = n_\es R(y,p_\es), \qquad y \in [0,1],\quad (x,t)\in \Omega_T,\\
    \varrho_\es(x,t) = \int_0^1 n_\es(y,x,t) \dx{y}.
    \end{dcases}
 \end{equation}

\noindent
The equation on $\varrho_\es$ reads
\begin{equation}\label{eq: density rho_es}
   \partialt{\varrho_\es} - \frac{\gamma}{\gamma+1}\Delta \varrho_\es^{\gamma+1}- \es \Delta \varrho_\es = \int_0^1 n_\es R(y,p_\es)  \dx{y} .
\end{equation}
As mentioned above, in order to define the population fraction densities $\sigma_\es=n_\es/\varrho_\es$ we have to make sure that the total population density $\varrho_\es$ is always strictly positive. To this end, we regularise the initial data as follows
\begin{equation*}
    n_{0,\es}(y,x) = n_0(y,x) + \es \ e^{-|x|^2}, 
\end{equation*}
therefore
\begin{equation*}
    \varrho_{0,\es}(x) = \varrho_0(x) + \es \ e^{-|x|^2}.
\end{equation*}
For the existence and regularity of weak solutions of System~\eqref{eq: eq n_es} we refer the reader to \cite{price2020global}, where the authors use the same regularisation in order to show existence of weak solutions to the analogous two-species cross-diffusion system through a fixed point argument.

Before proving that the regularisation of the initial data implies strict positivity of $\varrho_\es(x,t)$ for all times, we prove non-negativity of solutions. 
\bigskip

\noindent
\textbf{Non-negativity.} Multiplying Eq.~\eqref{eq: eq n_es} by $\sign_-(n_\es)$ and using Kato's inequality we obtain
\begin{equation*}
     \partialt{}(n_\es)_- - \nabla \cdot((n_\es)_- \nabla p_\es) - \es \Delta (n_\es)_- \leq (n_\es)_- \|R\|_\infty, 
\end{equation*}
where we denote $\|R\|_\infty = \sup_{y\in[0,1]} R(y, 0)$. Integrating in space, we have
\begin{equation*}
     \ddt \int_{\R^d} (n_\es)_-\dx{x} - \int_{\R^d}\nabla \cdot((n_\es)_- \nabla p_\es) \dx{x}- \es\int_{\R^d} \Delta (n_\es)_- \dx{x}\leq\|R\|_\infty \int_{\R^d}(n_\es)_-\dx{x} , 
\end{equation*}
By Gronwall’s lemma we infer
\begin{equation*}
   \int_0^1\int_{\R^d} (n_\es(y,x,t))_-\dx{x}\dx{y} \leq e^{\|R\|_\infty t} \int_0^1\int_{\R^d} (n_{\es}(y,x,0))_- \dx{x}\dx{y},
\end{equation*}
which implies that almost everywhere $n_\es (t) \geq 0$
for $t\in (0, T]$ and by consequence both the density $\varrho_\es$ and the pressure $p_\es$ are non-negative.

\bigskip

\noindent{\textbf{Positivity.}}
Let us define the function
\begin{equation*}
    \underline{\varrho} = \es e^{-K t} e^{-|x|^2},
\end{equation*}
with $K= 2(\es + \gamma) + \|R\|_\infty$.
We state that $\underline{\varrho}$ is a subsolution of the following equation
\begin{equation*}
    \partialt \varrho = \frac{\gamma}{\gamma+1}\Delta \varrho^{\gamma+1} + \es \Delta \varrho -\varrho \|R\|_\infty.
\end{equation*}
In fact, we have
\begin{align*}
 \frac{\gamma}{\gamma+1}\Delta \underline\varrho^{\gamma+1} + \es \Delta \underline\varrho -\underline\varrho \|R\|_\infty = \ & 2\gamma \underline\varrho^{\gamma+1}(2(\gamma+1)|x|^2-1)+2\es (2|x|^2-1) \underline\varrho  -\underline\varrho\|R\|_\infty\\
 \geq &-2\es \underline\varrho - 2\gamma \underline\varrho^{\gamma+1} - \underline\varrho\|R\|_\infty\\
 \geq &(-2\es - 2\gamma - \|R\|_\infty)\underline\varrho\\
 =& - K \underline\varrho\\
 =& \partialt{\underline\varrho}.
\end{align*}
Therefore, since by \eqref{eq: density rho_es} $\varrho_\es$ is a supersolution to the same equation and $\varrho_\es(0) \geq \underline\varrho(0)$, by the comparison principle we have
\begin{equation*}
    \varrho_\es(t)\geq \underline\varrho(t) >0, \ 
\forall t\in[0,T].
\end{equation*}
Therefore, the quantity $$\sigma_\es(y,x,t):= \frac{n_\es(y,x,t)}{\varrho_\es(x,t)},$$ is well defined, and satisfies the following equation 
\begin{equation}\label{eq: fraction density}
    \partialt{\sigma_\es} = \varepsilon \Delta \sigma_\varepsilon + \frac{2 \varepsilon}{\varrho_\es} \nabla \sigma_\es \cdot\nabla \varrho_\es  + \nabla \sigma_\es \cdot \nabla p_\es + \sigma_\es R(y,p_\es) - \sigma_\es \int_0^1 \sigma_\es(\eta) R(\eta,p_\es)\dx{\eta},
 \end{equation}
where we used the notation $\eta$ to distinguish the variable of integration from the variable $y$ involved in the equation. 

Therefore, we rewrite the equation on $\varrho_\es$ as
 \begin{equation*}
   \partialt{\varrho_\es }-\frac{\gamma}{\gamma+1} \Delta \varrho_\es^{\gamma+1} - \es \Delta \varrho_\es = \varrho_\es \curlyR_\es,
 \end{equation*}
where we denote 
\begin{equation}\label{eq: curly R}
   \curlyR_\es:= \curlyR(\sigma_\es,p_\es) = \int_0^1 \sigma_\es(\eta) R(\eta,p_\es)\dx{\eta}.
\end{equation}
Let us notice that, from \eqref{eq: curly R}, $\curlyR_\es$ is also uniformly bounded in $L^\infty(Q_T)$ and 
\begin{equation*}
    \|\curlyR_\es\|_{L^\infty(Q_T)} \leq \sup_{y\in [0,1]}|R(y,0)| \int_0^1 \sigma_\es(\eta) \dx{\eta} = \|R\|_\infty.
\end{equation*}

\subsection{A priori estimates}\label{subsec: a priori}
Here we prove a priori estimates (uniform in $\es$) which are essential to prove the existence of weak solutions.
\bigskip

\noindent
\textbf{$\boldsymbol{L^1}$-bounds.} 
\noindent
Integrating in space we obtain
\begin{align*}
    \ddt \int_{\R^d} \varrho_\es \dx{x}&= \frac{\gamma}{\gamma+1}\int_{\R^d}\Delta \varrho_\es^{\gamma+1}\dx{x} +\es \int_{\R^d}\Delta \varrho_\es\dx{x} +  \int_{\R^d}\int_0^1 n_\es R(y,p_\es)\dx{y}\dx{x}\\[0.4em]
    &\leq \|R\|_\infty \int_{\R^d} \varrho_\es\dx{x}.
\end{align*}
By Gronwall's lemma we have $\varrho_\es\in L^\infty(0,T; L^1(\R^d))$ and thus $p_\es \in L^\infty(0,T; L^1(\R^d))$.

\bigskip
\noindent
\textbf{$\boldsymbol{L^\infty}$-bounds.}
\noindent
Let us denote $\varrho_M:= (p_M)^{1/\gamma}$. From Eq.~\eqref{eq: density rho_es} we have 
  \begin{align*}
     \partialt{}(\varrho_\es - \varrho_M) - \frac{\gamma}{\gamma+1}\Delta (\varrho_\es^{\gamma+1}-\varrho_M^{\gamma+1}) -\es \Delta(\varrho_\es -\varrho_M)   = \varrho_\es \curlyR_\es.
 \end{align*}
Multiplying by $\sign_+(\varrho_\es - \varrho_M)$ we obtain 
   \begin{equation*}
       \begin{split}
   \partialt{} (\varrho_\es - \varrho_M)_+ - \frac{\gamma}{\gamma+1}\Delta (\varrho_\es^{\gamma+1}-\varrho_M^{\gamma+1})_+ -\es \Delta(\varrho_\es -\varrho_M)_+ 
     \leq& \varrho_\es \curlyR_\es \sign_+(\varrho_\es - \varrho_M) \\[0.8em]
     \leq& 0,
       \end{split}
   \end{equation*}
where in the last inequality we used $\partial_p R \leq 0$ and $R(\cdot, p_M)\leq 0$.
Integrating over $\R^d$ and applying Gronwall's lemma we obtain
\begin{equation*}
    \ddt \int_{\R^d} (\varrho_\es - \varrho_M)_+ \dx{x} \leq e^{\|\curlyR\|_\infty t} \int_{\R^d} (\varrho_{0,\es}- \varrho_M)_+ \dx{x}.
\end{equation*}
For all $0<\es\leq\es_0$, thanks to Assumption~\eqref{ass: well prepared id}, we finally have
    \begin{equation}\label{eq: l infty rho p}
        0 \leq \varrho_\es \leq \varrho_M, \quad 0\leq p_\es\leq p_M.
    \end{equation}
Let us consider the equation on the fraction density, Eq.~\eqref{eq: fraction density}. By the assumptions on the reaction term, $\sigma_\es$ satisfies
\begin{equation*}
    \partialt{\sigma_\es} \leq \varepsilon \Delta \sigma_\varepsilon + \frac{2 \varepsilon}{\varrho_\varepsilon} \nabla \sigma_\varepsilon \cdot\nabla \varrho_\varepsilon + \nabla \sigma_\es \cdot \nabla p_\es + \sigma_\es 2 \|R_\es\|_\infty.
 \end{equation*}
Hence, by the comparison principle we obtain
    \begin{equation*}
        \sigma_\es\leq e^{2 \|R_\es\|_\infty t} \|\sigma_{0,\es}\|_\infty.    \end{equation*}
 Since by Assumption~\eqref{ass: well prepared id} $\sigma_{0,\es}$ is uniformly bounded in $[0,1] \times\R^d $, we have
 \begin{equation}\label{eq: l infty sigma}
  \sigma_\es \in L^\infty([0,1]\times Q_T)   , 
 \end{equation}
 and by consequence
 \begin{equation}\label{eq: l infty n}
      n_\es \in L^\infty([0,1]\times Q_T)  .
 \end{equation}

\bigskip
\noindent
\textbf{Bound on the pressure gradient.}
\noindent
Let us integrate the equation on the pressure $p_\es$, namely
\begin{equation*}
    \partialt{p_\es} = \gamma p_\es (\Delta p_\es + \curlyR_\es) +\es \Delta p_\es -\es \frac{\gamma-1}{\gamma} \frac{|\nabla p_\es|^2}{p_\es} +|\nabla p_\es|^2,
\end{equation*}
to obtain
\begin{equation*}
    \ddt \int_{\R^d} p_\es \dx{x} + (\gamma-1)\int_{\R^d} |\nabla p_\es|^2 \dx{x} \leq \gamma \int_{\R^d} p_\es \curlyR_\es \dx{x},
\end{equation*}
from which we infer $\nabla p_\es \in L^2(Q_T)$ uniformly in $\es$.

\bigskip
\noindent
\textbf{Second moment.}
\noindent
Now we prove the boundedness of the second moment of $\varrho_\es$.
Let us multiply Eq.~\eqref{eq: eq n_es} by $|x|^2/2$ to get
\begin{equation*}
    \partialt{n_\es} \frac{|x|^2}{2} = \nabla \cdot (n_\es \nabla p_\es) \frac{|x|^2}{2} + \es \Delta n_\es \frac{|x|^2}{2} + n_\es R(y,p_\es) \frac{|x|^2}{2}.
\end{equation*}
After integration by parts we have
\begin{equation*}
\begin{aligned}
    \ddt&\int_{\R^d}\int_0^1 n_\es \frac{|x|^2}{2}\dx{y}\dx{x} \\[0.5em]
    &= - \int_{\R^d}\int_0^1 n_\es \nabla p_\es \cdot x \dx{y}\dx{x} + \es d \int_{\R^d}\int_0^1  n_\es \dx{y}\dx{x}+ \int_{\R^d}\int^1_0 n_\es R(y,p_\es)\frac{|x|^2}{2}\dx{y}\dx{x}\\[0.5em]
    &\leq \int_{\R^d}\int_0^1 n_\es \frac{|x|^2}{2}\dx{y}\dx{x} + \frac{1}{2}\int_{\R^d}\int_0^1 n_\es |\nabla p_\es|^2 \dx{y}\dx{x} +  \es d \|\varrho_\es\|_{L^\infty(0,T;L^1(\R^d))}\\
    &\qquad\qquad \qquad + \|\curlyR_\es\|_\infty \int_{\R^d}\int_0^1 n_\es \frac{|x|^2}{2}\dx{y}\dx{x}\\[0.5em]
    &\leq C\int_{\R^d}\int_0^1 n_\es \frac{|x|^2}{2}\dx{y}\dx{x} +C.
\end{aligned}
\end{equation*}
By Gronwall's inequality we conclude that $n_\es |x|^2\in L^\infty(0,T; L^1([0,1]\times\R^d))$.

\subsection{Passing to the limit \texorpdfstring{$\boldsymbol{\es \to 0}$}{e-0}}
\label{sec: limit es to zero}
Extending the method by Price and Xu \cite{price2020global}, in this section, we prove the existence of solutions to System~\eqref{eq: prob pheno}, by showing the convergence of the solution of the regularised problem as $\es\to 0$. To this end, the most involved part consists in proving the strong convergence of the degenerate divergence term. Unlike the method developed by Gwiazda \textit{et al.} in \cite{GPS}, this strategy focuses on the quantity $\varrho_\es^{\gamma+1}$ rather than on the pressure $p_\es= \varrho_\es^\gamma$.
\begin{lemma}\label{lemma: entropy}
There exists a positive constant $C(T)$ independent of $\es$ such that the following holds
\begin{equation*}
 \frac{4\gamma}{(\gamma+1)^2} \iint_{Q_T} \left|\nabla \varrho_\es^{\frac{\gamma+1}{2}}\right|^2 \dx{x}\dx{t} + \es \iint_{Q_T} \int_0^1 \left|\nabla \sqrt{n_\es}(y)\right|^2 \dx{y} \dx{x}\dx{t} \leq C(T).
\end{equation*}
 \end{lemma}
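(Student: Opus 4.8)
The plan is to read the estimate as an entropy--entropy-dissipation inequality, obtained by testing the equation for $n_\es$ against $\log n_\es$. Since $n_{0,\es}\geq \es\,e^{-|x|^2}>0$ and \eqref{eq: eq n_es} is uniformly parabolic, $n_\es$ stays strictly positive, so $\log n_\es$ is well defined; to carry out the computation rigorously on the whole of $\R^d$ I would first replace $\log n_\es$ by $\log(n_\es+\delta)$ and insert a spatial cut-off, perform the integrations by parts on this regularised functional, and let $\delta\to0$ and the cut-off expand to $\R^d$ at the end (the cut-off terms vanishing thanks to the a priori $L^2$-gradient and $L^1$ bounds). Using the entropy density $\Phi(n)=n\log n-n$, whose derivative is exactly $\Phi'(n)=\log n$, multiplication of \eqref{eq: eq n_es} by $\log n_\es$ and integration in $(y,x)$ yields
\begin{equation*}
\ddt \int_0^1\!\!\int_{\R^d}\Phi(n_\es)\dx{x}\dx{y}+\int_0^1\!\!\int_{\R^d}\nabla p_\es\cdot\nabla n_\es\dx{x}\dx{y}+4\es\int_0^1\!\!\int_{\R^d}\abs{\nabla\sqrt{n_\es}}^2\dx{x}\dx{y}=\int_0^1\!\!\int_{\R^d}n_\es R(y,p_\es)\log n_\es\dx{x}\dx{y},
\end{equation*}
where the viscosity term has been rewritten through $\es\,\nabla n_\es\cdot\nabla\log n_\es=\es\,\abs{\nabla n_\es}^2/n_\es=4\es\,\abs{\nabla\sqrt{n_\es}}^2$.

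The two integrals on the left are precisely the dissipation terms in the statement. For the degenerate diffusion term I would use that $p_\es=\varrho_\es^\gamma$ is independent of $y$ and that $\int_0^1\nabla n_\es\dx{y}=\nabla\varrho_\es$, so that
\begin{equation*}
\int_0^1\!\!\int_{\R^d}\nabla p_\es\cdot\nabla n_\es\dx{x}\dx{y}=\int_{\R^d}\nabla p_\es\cdot\nabla\varrho_\es\dx{x}=\gamma\int_{\R^d}\varrho_\es^{\gamma-1}\abs{\nabla\varrho_\es}^2\dx{x}=\frac{4\gamma}{(\gamma+1)^2}\int_{\R^d}\abs*{\nabla\varrho_\es^{\frac{\gamma+1}{2}}}^2\dx{x},
\end{equation*}
the last identity following from $\nabla\varrho_\es^{(\gamma+1)/2}=\tfrac{\gamma+1}{2}\varrho_\es^{(\gamma-1)/2}\nabla\varrho_\es$. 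Integrating the identity over $(0,T)$, and noting that $4\es\geq\es$ while both dissipation terms are nonnegative, the claimed bound will follow once the right-hand side, namely
\begin{equation*}
\int_0^1\!\!\int_{\R^d}\Phi(n_{0,\es})\dx{x}\dx{y}-\int_0^1\!\!\int_{\R^d}\Phi(n_\es(T))\dx{x}\dx{y}+\iint_{Q_T}\!\!\int_0^1 n_\es R\log n_\es\dx{y}\dx{x}\dx{t},
\end{equation*}
is bounded from above uniformly in $\es$.

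The main obstacle is that $\Phi$ is not sign-definite and $\R^d$ has infinite measure, so the negative tail of $\Phi(n_\es)$ must be controlled at spatial infinity; this is where the second-moment estimate enters. I would bound the positive part by $\Phi(n)\leq n\log n\leq n\log\|n_\es\|_{L^\infty}$ on $\{n\geq1\}$, using the uniform $L^\infty$ and $L^1$ controls \eqref{eq: l infty rho p}, \eqref{eq: l infty n}. For the negative part I would use $(\Phi(n))_-\leq n+C_\delta\,n^{1-\delta}$ for any $\delta\in(0,1)$ and estimate $\int_{\R^d}n^{1-\delta}\dx{x}$ by splitting $\R^d$ into a ball $B_R$ and its complement: on $B_R$ Hölder's inequality gives a bound by $\|n_\es\|_{L^1}^{1-\delta}$, while on $B_R^c$ I would write $n^{1-\delta}=(n\abs{x}^2)^{1-\delta}\abs{x}^{-2(1-\delta)}$ and apply Hölder with exponents $\tfrac{1}{1-\delta},\tfrac1\delta$, so that the uniform second-moment bound $n_\es\abs{x}^2\in L^\infty(0,T;L^1)$ together with the convergence of $\int_{B_R^c}\abs{x}^{-2(1-\delta)/\delta}\dx{x}$ (valid once $\delta<2/(d+2)$) yields $\int_{\R^d}n_\es^{1-\delta}\dx{x}\leq C$ uniformly in $\es$ and $t$. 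This bounds $\int\Phi(n_\es(T))$ from below and, applied to the well-prepared data \eqref{ass: well prepared id}, bounds $\int\Phi(n_{0,\es})$ from above; the same pointwise controls on $\abs{n_\es\log n_\es}$ together with $\|R\|_\infty<\infty$ give $\abs{\iint_{Q_T}\int_0^1 n_\es R\log n_\es}\leq C(T)$. Collecting these estimates produces the stated inequality.
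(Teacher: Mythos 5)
Your proposal is correct and follows essentially the same route as the paper: multiply \eqref{eq: eq n_es} by a regularised logarithm, identify the two dissipation terms (with $\nabla p_\es\cdot\nabla\varrho_\es=\frac{4\gamma}{(\gamma+1)^2}|\nabla\varrho_\es^{(\gamma+1)/2}|^2$ after integrating in $y$), and control the entropy and the reaction term via the uniform $L^1$, $L^\infty$ and second-moment bounds. The only difference is cosmetic: you handle the negative part of $n\ln n$ by the interpolation $(\Phi(n))_-\leq n+C_\delta n^{1-\delta}$ and Hölder against $|x|^{-2(1-\delta)/\delta}$, whereas the paper uses the Fenchel--Young inequality $-n\ln n\leq e^{-|x|^2-1}+n|x|^2$ directly; both rest on the same second-moment estimate.
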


\begin{proof}
Let $\nu$ be a positive constant. We multiply Eq.~\eqref{eq: eq n_es} by $\ln(n_\es + \nu)$ and we obtain
\begin{equation*}
    \partialt{n_\es} \ln(n_\es+\nu ) - \nabla\cdot(n_\es \nabla p_\es) \ln(n_\es + \nu) - \es \Delta n_\es \ln(n_\es + \nu) = n_\es R(y,p_\es) \ln(n_\es + \nu).
\end{equation*}
Integrating in space and in $y$ over $[0,1]$ we have
\begin{equation*}
\begin{aligned}
    \ddt \int_{\R^d}\!\int_0^1 \prt*{(n_\es + \nu) \ln(n_\es+\nu ) - n_\es} \dx{y}\dx{x} +& \int_{\R^d}\!\int_0^1 \frac{n_\es}{n_\es +\nu} \nabla p_\es \cdot \nabla n_\es \dx{y}\dx{x} + \es\int_{\R^d}\!\int_0^1 \frac{|\nabla n_\es|^2}{n_\es + \nu}  \dx{y}\dx{x}\\[0.4em]
    &= \int_{\R^d}\!\int_0^1n_\es R(y,p_\es) \ln(n_\es + \nu) \dx{y}\dx{x}\\[0.4em]
    &\leq \|R\|_\infty \int_{\R^d}\!\int_0^1 n_\es |\ln(n_\es+\nu)|  \dx{y}\dx{x}.
\end{aligned}
\end{equation*}
Let $t\leq T$. Upon integration in time for $\tau \in [0,t]$, we obtain
\begin{equation}\label{intermediate}
\begin{aligned}
\int_{\R^d}\!&\int_0^1 \prt*{(n_\es(t) + \nu) \ln(n_\es(t)+\nu )- n_\es(t)} \dx{y} \dx{x} +
 \int_0^t\!\int_{\R^d}\!\nabla p_\es \cdot \prt*{\int_0^1 \frac{n_\es}{n_\es +\nu}\nabla n_\es \dx{y}}\dx{x}\dx{\tau} \\[0.3em]
&\qquad + \es \int_0^t\!\int_{\R^d}\!\int_0^1 \frac{|\nabla n_\es|^2}{n_\es + \nu}  \dx{y}\dx{x}\dx{\tau} \\
&\leq  \int_{\R^d}\!\int_0^1 (n_{0,\es} + \nu) \ln(n_{0,\es}+\nu ) \dx{y}\dx{x} + \|R\|_\infty \int_0^t\int_{\R^d}\int_0^1 n_\es |\ln (n_\es + \nu)|\dx{y}\dx{x}\dx{\tau}.
\end{aligned}
\end{equation}
Let us now prove that $n_\es \ln n_\es$ is actually bounded in $L^\infty_t L^1_{y,x}$.
Using Fenchel-Young inequality we compute
\begin{equation*}
\begin{aligned}
    \int_{\R^d} n_\es |\ln n_\es| \dx{x} &= -\int_{\R^d} n_\es \ln n_\es \dx{x} +2\int_{\R^d} n_\es |\ln n_\es| \mathds{1}_{\{n_\es\geq 1\}}\dx{x}\\
    &\leq  -\int_{\R^d} n_\es \ln n_\es \dx{x} +2 \int_{\R^d} n_\es^2\mathds{1}_{\{n_\es\geq 1\}}\dx{x}\\
    &\leq \int_{\R^d} \big(e^{-|x|^2-1} + n_\es |x|^2 \big)\dx{x}  +2 \sup_{y\in[0,1]}\|n_\es(y)\|_{L^\infty(Q_T)}\int_{\R^d} n_\es\mathds{1}_{\{n_\es\geq 1\}}\dx{x}
    \end{aligned}
\end{equation*}
Finally, we can estimate
\begin{equation*}
    \begin{aligned}
      \sup_{0\leq t \leq T}  \int_{\R^d} \int_0^1 n_\es |\ln n_\es|\dx{y} \dx{x} &\leq  C ,
    \end{aligned}
\end{equation*}
where in the last inequality we used the uniform controls on the $L^1$-norm and the second moment of the total density.
Letting $\nu \to 0$ in \eqref{intermediate}, thanks the assumptions on $n_{0,\es}$ and the bound proven above, we have
\begin{equation}\label{entropy}
 \int_0^t\!\int_{\R^d}\!\nabla \varrho_\es^\gamma \cdot\nabla \varrho_\es \dx{x}\dx{\tau} + 4\es \int_0^t\!\int_{\R^d}\!\int_0^1 |\nabla \sqrt{n_\es}|^2 \dx{y}\dx{x}\dx{\tau} \leq  C(T),
\end{equation}
for all $0\leq t\leq T$, and this concludes the proof.

\end{proof}

\begin{remark}
From Eq.~\eqref{entropy} we also have (uniformly in $\es$ and $\gamma$)
    \begin{equation}\label{bound grad}
      \frac{1}{\gamma}  \int_0^T\!\int_{\R^d}  \frac{|\nabla p_\varepsilon|^2}{p_\varepsilon^{1-1/\gamma}}\dx{x}\dx{t} \leq C(T).
    \end{equation}
\end{remark}
\noindent
The above bound has been used in \cite{GPS} in order to prove a bound on $(\Delta p)_-$ in $L^\infty_tL^2_x$, namely an $L^p$-version of the celebrated Aronson-B\'enilan estimate. This requires a technical assumption on the reaction rates which in the proof of \cite[Theorem~1]{GPS} is combined to \eqref{bound grad}. However, this proof breaks down in the structured case, since the treatment of that particular term was strongly related to the fact that the number of species is finite. A way to remove that technical assumption was recently found in \cite{jacobs2}, where the author establishes a bound on a different quantity, however still sufficient to infer a bound of $\Delta p$ in $L^1_{x,t}$.

\begin{lemma}
The sequence $\varrho_\es^{\frac{\gamma+1}{2}}$ is precompact in $L^2(0,T; L^2({\R^d}))$.
\end{lemma}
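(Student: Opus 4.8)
The plan is to prove precompactness in $L^2(0,T;L^2(\R^d))=L^2(Q_T)$ by verifying the Kolmogorov--Riesz--Fréchet criterion for the sequence $w_\es:=\varrho_\es^{(\gamma+1)/2}$, i.e. establishing three uniform-in-$\es$ properties: boundedness in $L^2(Q_T)$, tightness at spatial infinity, and equicontinuity under translations in both $x$ and $t$. Boundedness is immediate: since $0\le\varrho_\es\le\varrho_M$ we have $w_\es^2=\varrho_\es^{\gamma+1}\le p_M\,\varrho_\es$, so $\|w_\es\|_{L^\infty(0,T;L^2(\R^d))}^2\le p_M\|\varrho_\es\|_{L^\infty(0,T;L^1(\R^d))}\le C$, while $\nabla w_\es=\nabla\varrho_\es^{(\gamma+1)/2}$ is bounded in $L^2(Q_T)$ by Lemma~\ref{lemma: entropy}.

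Tightness and spatial equicontinuity are the easy ingredients. For tightness, the same bound gives $\int_0^T\!\int_{|x|>R}w_\es^2\,\dx{x}\dx{t}\le p_M\int_0^T\!\int_{|x|>R}\varrho_\es\,\dx{x}\dx{t}\le \tfrac{p_M T}{R^2}\,\|\varrho_\es|x|^2\|_{L^\infty(0,T;L^1(\R^d))}$, which tends to $0$ uniformly in $\es$ by the second-moment bound of Section~\ref{subsec: a priori}. Spatial equicontinuity follows directly from the gradient bound, since $\|w_\es(\cdot+\xi)-w_\es\|_{L^2(Q_T)}\le|\xi|\,\|\nabla w_\es\|_{L^2(Q_T)}\le C|\xi|$.

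The main obstacle is the equicontinuity in time, where the nonlinear quantity $w_\es$ must be tied back to the flux of the equation. The key pointwise inequality, obtained by writing $a^{(\gamma+1)/2}-b^{(\gamma+1)/2}=\tfrac{\gamma+1}{2}\int_b^a s^{(\gamma-1)/2}\dx{s}$ and applying Cauchy--Schwarz, is
$$\big(a^{(\gamma+1)/2}-b^{(\gamma+1)/2}\big)^2\le\frac{(\gamma+1)^2}{4\gamma}\,(a-b)(a^\gamma-b^\gamma),\qquad a,b\ge0.$$
Applied with $a=\varrho_\es(t+h)$, $b=\varrho_\es(t)$, so that $a^\gamma=p_\es(t+h)$, $b^\gamma=p_\es(t)$, and integrated over $Q_T$, this reduces the time modulus of continuity of $w_\es$ to controlling
$$I_h:=\int_0^{T-h}\!\int_{\R^d}\big(\varrho_\es(t+h)-\varrho_\es(t)\big)\big(p_\es(t+h)-p_\es(t)\big)\dx{x}\dx{t}.$$
Writing $\varrho_\es(t+h)-\varrho_\es(t)=\int_t^{t+h}\partial_\tau\varrho_\es\,\dx{\tau}$ and using Eq.~\eqref{eq: density rho_es} in the form $\partial_\tau\varrho_\es=\nabla\cdot F_\es+\varrho_\es\curlyR_\es$ with $F_\es:=\tfrac{\gamma}{\gamma+1}\nabla v_\es+\es\nabla\varrho_\es$ and $v_\es:=\varrho_\es^{\gamma+1}$, an integration by parts in $x$ turns $I_h$ into the integral of $-F_\es(\tau)\cdot\big(\nabla p_\es(t+h)-\nabla p_\es(t)\big)$ over the strip $\{t\le\tau\le t+h\}$, plus a lower-order reaction contribution. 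The crucial point is that $F_\es$ is bounded in $L^2(Q_T)$ uniformly in $\es$: indeed $\nabla v_\es=2w_\es\nabla w_\es$ is controlled by the $L^\infty$ bound on $w_\es$ and Lemma~\ref{lemma: entropy}, while $\es\nabla\varrho_\es$ is controlled (in fact $O(\sqrt\es)$) by writing $\es\nabla n_\es=2\sqrt\es\,\sqrt{n_\es}\,(\sqrt\es\,\nabla\sqrt{n_\es})$ and combining the second bound of Lemma~\ref{lemma: entropy} with \eqref{eq: l infty n}; finally $\nabla p_\es$ is bounded in $L^2(Q_T)$ by the pressure-gradient estimate of Section~\ref{subsec: a priori}. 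Since the strip has width $h$, a Cauchy--Schwarz argument in $(t,\tau)$ then gives $I_h\le Ch$ with $C$ independent of $\es$, whence $\int_0^{T-h}\|w_\es(t+h)-w_\es(t)\|_{L^2(\R^d)}^2\,\dx{t}\le \tfrac{(\gamma+1)^2}{4\gamma}\,I_h\le Ch\to0$ uniformly in $\es$.

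Combining the three properties, the Kolmogorov--Riesz--Fréchet theorem (equivalently, a nonlinear Aubin--Lions--Simon argument) yields that $\{w_\es\}$ is precompact in $L^2(Q_T)$, which is the claim. The delicate part throughout is that every bound is genuinely uniform in $\es$ and that the nonlinearity is absorbed by the convexity inequality above; the unboundedness of $\R^d$ is dealt with precisely through the second-moment-based tightness rather than a compact Sobolev embedding.
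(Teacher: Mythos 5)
Your proof is correct, but it follows a genuinely different route from the paper. The paper obtains the compactness via (a generalized) Aubin--Lions: it computes $\partial_t \varrho_\es^{\frac{\gamma+1}{2}}$ explicitly through the chain rule and Eq.~\eqref{eq: density rho_es}, recognises it as a sum of divergence terms bounded in $L^2(0,T;H^{-1})$ plus terms bounded in $L^1$, and combines this with the gradient bound of Lemma~\ref{lemma: entropy}. You instead verify the Kolmogorov--Riesz--Fr\'echet criterion directly, with the time modulus of continuity handled by the Alt--Luckhaus-type convexity inequality
\begin{equation*}
\bigl(a^{\frac{\gamma+1}{2}}-b^{\frac{\gamma+1}{2}}\bigr)^2\le\frac{(\gamma+1)^2}{4\gamma}\,(a-b)(a^{\gamma}-b^{\gamma}),
\end{equation*}
which reduces the translate $\|w_\es(\cdot+h)-w_\es\|_{L^2}^2$ to the quantity $I_h$ controlled by the flux $F_\es=\frac{\gamma}{\gamma+1}\nabla v_\es+\es\nabla\varrho_\es$; the double Cauchy--Schwarz in $(t,\tau)$ indeed yields $I_h\le Ch$, and all the uniform bounds you invoke ($\nabla\varrho_\es^{\frac{\gamma+1}{2}}$ and $\sqrt{\es}\,\nabla\sqrt{n_\es}$ in $L^2$, $\nabla p_\es$ in $L^2$, the second moment) are available from Section~\ref{subsec: a priori} and Lemma~\ref{lemma: entropy}. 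What your argument buys is that it sidesteps the question of which version of Aubin--Lions applies when the time derivative is only a sum of an $H^{-1}_x$ piece and an $L^1$ piece, and it makes the treatment of the unbounded domain $\R^d$ explicit through the second-moment tightness, a point the paper leaves implicit; the paper's route is shorter once one accepts the generalized compactness lemma, and produces along the way the identity for $\partial_t\varrho_\es^{\frac{\gamma+1}{2}}$ that is reused elsewhere. One small presentational caveat: when applying Kolmogorov--Riesz on the time interval $(0,T)$ you should note that the boundary strips $(0,h)$ and $(T-h,T)$ contribute $O(h)$ by the uniform $L^\infty(0,T;L^2(\R^d))$ bound on $w_\es$, so that translation of the zero-extension is controlled; this is routine and does not affect the validity of the argument.
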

\begin{proof}
From Lemma~\ref{lemma: entropy} we know that the gradient of $\varrho_\es^{\frac{\gamma+1}{2}}$ is bounded in $L^2(Q_T)$.
Now we compute its time derivative.
\begin{align*}
    \partialt{} \varrho_\es^{\frac{\gamma+1}{2}} = & \frac{\gamma+1}{2} \varrho_\es^{\frac{\gamma-1}{2}}\prt*{\nabla\cdot(\varrho_\es \nabla p_\es) + \es \Delta \varrho_\es + \int_0^1 n_\es(\eta)R(\eta,p_\es)\dx{\eta}}\\[0.3em]
    = &\frac{\gamma+1}{2} \varrho_\es^{\frac{\gamma-1}{2}} \nabla \cdot (\varrho_\es \nabla \varrho_\es^\gamma) + \frac{\gamma+1}{2}\es \varrho_\es^{\frac{\gamma-1}{2}} \Delta \varrho_\es+ \frac{\gamma+1}{2}\varrho_\es^{\frac{\gamma-1}{2}} \int_0^1 n_\es(\eta)R(\eta,p_\es) \dx{\eta}\\[0.3em]
    = &\frac{\gamma+1}{2} \nabla\cdot\prt*{\varrho_\es^{\frac{\gamma+1}{2}}\nabla\varrho_\es^\gamma}  - \frac{\gamma^2-1}{4} \varrho_\es^{\frac{\gamma-1}{2}} \nabla \varrho_\es \cdot\nabla\varrho_\es^{\gamma} + \frac{\gamma+1}{2}\es \nabla\cdot\prt*{\varrho_\es^{\frac{\gamma-1}{2}}\nabla \varrho_\es}\\[0.3em]
    &\quad- \frac{\gamma^2-1}{4}\es\varrho_\es^{\frac{\gamma-3}{2}}|\nabla\varrho_\es|^2 + \frac{\gamma+1}{2}\varrho_\es^{\frac{\gamma-1}{2}} \int_0^1 n_\es(\eta)R(\eta,p_\es) \dx{\eta}\\[0.3em]
    = &\gamma \nabla\cdot\prt*{\varrho_\es^\gamma \nabla \varrho_\es^{\frac{\gamma+1}{2}}} - \gamma\frac{\gamma-1}{\gamma+1}\varrho_\es^{\frac{\gamma-1}{2}}\left|\nabla \varrho_\es^{\frac{\gamma+1}{2}}\right|^2  +\es \Delta\varrho_\es^{\frac{\gamma+1}{2}} \\[0.3em]
    &\quad - \es (\gamma^2-1)\varrho_\es^{\frac{
    \gamma-1}{2}}|\nabla\sqrt{\varrho_\es}|^2 + \frac{\gamma+1}{2}\varrho_\es^{\frac{\gamma-1}{2}} \int_0^1 n_\es(\eta)R(\eta,p_\es) \dx{\eta}.
\end{align*}
Let us notice that Lemma~\ref{lemma: entropy} and the uniform $L^\infty$-bound of $\sigma_\es$ imply $\es|\nabla\sqrt{\varrho_\es}|^2 \in L^1(Q_T).$
Therefore, the time derivative of $ \varrho_\es^{\frac{\gamma+1}{2}}$ is a sum of functions bounded in $L^2(0,T; H^{-1}({\R^d}))$ and $L^1$-functions. Applying Aubin-Lions' lemma we infer that $ \varrho_\es^{\frac{\gamma+1}{2}}$ is precompact in $L^2(Q_T)$.

\end{proof}
\begin{remark}
The sequence $\varrho_\es$ is precompact in any $L^q$-space, for $1\leq q < \infty$.
In fact, if $q<\frac{\gamma+1}{2}$, the result follows from H\"older's inequality, while if $q>\frac{\gamma+1}{2}$ it follows from the uniform boundedness of $\varrho_\es$ in $L^\infty$.
\end{remark}

\begin{remark}\label{remark: convergence}
Let us recall the results already proven. Up to a subsequence, we have
\begin{align*}
    \sigma_\es &\rightharpoonup \sigma & &\hspace{-2.5cm}\text{weak}^* \text{ in } L^\infty([0,1]\times Q_T),
    \\[0.6em]
     n_\es &\rightharpoonup n   &  &\hspace{-2.5cm}\text{weak}^* \text{ in } L^\infty([0,1]\times Q_T),
    \\[0.6em]
     \varrho_\es &\rightarrow \varrho  & &\hspace{-2.5cm}\text{strongly} \text{ in } L^q(Q_T), \text{ for each } 1\leq q <\infty,
    \\[0.6em] 
     \varrho_\es^{\frac{\gamma+1}{2}} &\rightharpoonup  \varrho^{\frac{\gamma+1}{2}} & &\hspace{-2.5cm}\text{weakly} \text{ in } L^2(0,T; H^{1}({\R^d})),
    \\[0.6em]
    \partialt{\varrho_\es} &\rightharpoonup \partialt{\varrho} & &\hspace{-2.5cm}\text{weakly} \text{ in } L^2(0,T; H^{-1}({\R^d})).
\end{align*}
Let us recall the notation $\curlyR = \int_0^1 \sigma(\eta) R(\eta,p) \dx{\eta}$. Then
\begin{align}
\label{eq: conv curlyR}
    \curlyR_\es \rightharpoonup \curlyR \quad &\text{ weak$^*$ in } L^\infty(Q_T)\\[0.4em]
\label{eq: conv n R}
    n_\es R(y,p_\es) \rightharpoonup n R(y,p) \quad &\text{ weak$^*$ in } L^\infty([0,1]\times Q_T). 
\end{align}
\noindent
The convergences of \eqref{eq: conv curlyR} and \eqref{eq: conv n R} are shown in detail in Appendix~\ref{app: N}.
\end{remark}
 
\begin{lemma}\label{lemma: conv integrals}
For all $q\ge \gamma+1$ and all $t \in[0,T]$, we have
\begin{equation*}
    \int_{\R^d} \prt*{\varrho_\es(x,t)}^q \dx{x} \xrightarrow[]{\es\to 0} \int_{\R^d} \prt*{\varrho(x,t)}^q \dx{x}.
\end{equation*}
\end{lemma}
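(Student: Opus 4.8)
The plan is to reduce this fixed-time statement to a one-dimensional compactness argument for the scalar functions $g_\es(t) := \int_{\R^d}\prt*{\varrho_\es(x,t)}^q\,\dx{x}$. First I would record that these are well defined and uniformly bounded: since $0\leq\varrho_\es\leq\varrho_M$ and $\varrho_\es\in L^\infty(0,T;L^1(\R^d))$, we have $0\leq g_\es(t)\leq \varrho_M^{q-1}\|\varrho_\es\|_{L^\infty(0,T;L^1(\R^d))}\leq C$, uniformly in $\es$ and $t\in[0,T]$. The target function is $F(t):=\int_{\R^d}\varrho(t)^q\,\dx{x}$.

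The heart of the argument is a one-sided (monotonicity) control in time. Multiplying Eq.~\eqref{eq: density rho_es} by $q\varrho_\es^{q-1}$ (licit because the regularised solution is smooth and strictly positive), integrating over $\R^d$ and integrating by parts yields
\[
g_\es'(t) = -q\gamma(q-1)\int_{\R^d}\varrho_\es^{q+\gamma-2}|\nabla\varrho_\es|^2\,\dx{x} - q(q-1)\es\int_{\R^d}\varrho_\es^{q-2}|\nabla\varrho_\es|^2\,\dx{x} + q\int_{\R^d}\varrho_\es^q\,\curlyR_\es\,\dx{x}.
\]
Since $q\geq\gamma+1>2$, both integrated-by-parts terms carry nonnegative integrands; this is precisely where the hypothesis $q\geq\gamma+1$ enters, guaranteeing that the exponents are admissible and the dissipation sign-definite. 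The reaction term is bounded by $q\|R\|_\infty\, g_\es(t)$, so $g_\es'(t)\leq q\|R\|_\infty\, g_\es(t)$, i.e.\ $t\mapsto e^{-q\|R\|_\infty t}g_\es(t)$ is non-increasing. A uniformly bounded family of monotone functions is compact in the sense of Helly's selection theorem: along a subsequence, $g_\es(t)\to G(t)$ for \emph{every} $t\in[0,T]$, with $e^{-q\|R\|_\infty t}G(t)$ again non-increasing.

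It remains to identify $G$. The strong convergence $\varrho_\es\to\varrho$ in $L^q(Q_T)$ (established in the preceding remark) together with the uniform $L^\infty$ bound gives, via $|\varrho_\es^q-\varrho^q|\leq q\varrho_M^{q-1}|\varrho_\es-\varrho|$, the estimate $\|g_\es-F\|_{L^1(0,T)}\leq q\varrho_M^{q-1}\|\varrho_\es-\varrho\|_{L^1(Q_T)}\to0$; hence $G=F$ almost everywhere. To upgrade to every $t$, I would use that $\varrho$ admits a representative in $C([0,T];L^q(\R^d))$, coming from $\partial_t\varrho\in L^2(0,T;H^{-1}(\R^d))$ and $\varrho^{\frac{\gamma+1}{2}}\in L^2(0,T;H^1(\R^d))$ together with the $L^\infty$ bound, so that $F$ is continuous; a monotone-type function coinciding almost everywhere with a continuous function coincides with it everywhere, whence $G\equiv F$. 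As the limit $F$ is independent of the subsequence, the whole family converges and $g_\es(t)\to F(t)$ for all $t$.

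The hard part will be the passage from \emph{a.e.\ }$t$ to \emph{all }$t$. Strong $L^q(Q_T)$ convergence only yields convergence of $g_\es$ in $L^1(0,T)$, hence at almost every time; because the dissipation provides only one-sided control, the family $\{g_\es\}$ is not equicontinuous and a direct modulus-of-continuity argument fails. The monotonicity-after-exponential-rescaling observation, which turns the one-sided estimate into Helly compactness, is exactly what bridges this gap, but it must be paired with the continuity in time of the limit $F$ to exclude a jump of $G$ away from its almost-everywhere value.
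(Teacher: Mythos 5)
Your overall architecture (Helly compactness from the one-sided differential inequality, plus a.e.\ identification from the $L^1(Q_T)$ convergence of $g_\es$) is coherent, and the computation giving $g_\es'(t)\leq q\|R\|_\infty g_\es(t)$ is correct. The genuine gap is the step you yourself flag as the bridge: the continuity of $F(t)=\int_{\R^d}\varrho(x,t)^q\dx{x}$. You assert it follows from $\partial_t\varrho\in L^2(0,T;H^{-1}(\R^d))$, $\varrho^{\frac{\gamma+1}{2}}\in L^2(0,T;H^1(\R^d))$ and the $L^\infty$ bound. Those facts only give $\varrho\in C([0,T];H^{-1}(\R^d))$ and hence \emph{weak} continuity in $L^q$; the standard Lions--Magenes argument for strong continuity would need $\varrho$ itself, not a power of it, to lie in $L^2(0,T;H^1)$, which fails for this degenerate diffusion, and a spatial bound holding only for a.e.\ $t$ cannot be interpolated at every $t$. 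Weak continuity only yields lower semicontinuity of $F$, and every soft tool at your disposal (weak l.s.c.\ of the norm, monotonicity of the rescaled Helly limit) produces the same one-sided inequality $G\geq F$. The reverse inequality at an exceptional time $t_0$ --- ruling out a concentration or oscillation of $\varrho_\es(\cdot,t_0)$ at that single time --- is precisely the content of the lemma and is not a consequence of the estimates available before it (Lemma~\ref{lemma: entropy} and the $L^2(Q_T)$-precompactness of $\varrho_\es^{(\gamma+1)/2}$). Two smaller points: the hypothesis $q\geq\gamma+1$ plays no role in the sign of your dissipation terms (any $q\geq 2$ works there), so your ``this is precisely where the hypothesis enters'' is a misdiagnosis; and the endpoint $t=T$ escapes your density-of-the-good-set argument, since a non-increasing function matching a continuous one a.e.\ need not match it at the right endpoint.

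The paper closes exactly this gap with a new energy estimate: setting $w_\es=\varrho_\es^{\gamma+1}+\es\frac{\gamma+1}{\gamma}\varrho_\es$ and testing the equation for $\varrho_\es$ against $\partial_t w_\es$ yields the uniform bounds $\nabla w_\es\in L^\infty(0,T;L^2(\R^d))$ and $\varrho_\es^{\gamma/2}\partial_t\varrho_\es\in L^2(Q_T)$, hence $\partial_t w_\es\in L^2(Q_T)$, and then Aubin--Lions gives precompactness of $w_\es$ (so of $\varrho_\es^{\gamma+1}$, so of $\varrho_\es$) in $C([0,T];L^2(\R^d))$, from which the fixed-time convergence of the integrals follows for all $q\geq\gamma+1$. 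This uniform-in-time spatial control is the missing ingredient in your proposal; if you insist on the Helly scheme you would still have to prove this (or an equivalent) estimate to obtain the continuity of $F$, at which point the detour through monotonicity becomes unnecessary.
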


\begin{proof}
Let us define
\begin{equation*}
    w_\es := \varrho_\es^{\gamma+1} + \es \frac{\gamma+1}{\gamma} \varrho_\es.
\end{equation*}
Hence, we rewrite Eq.~\eqref{eq: density rho} as
\begin{equation}\label{eq: new equation density}
    \partialt{\varrho_\es} - \frac{\gamma}{\gamma +1} \Delta w_\es = \varrho_\es \curlyR_\es,
\end{equation}
where we recall that $\curlyR_\es=\int_0^1 \sigma_\es R(\eta,p_\es)\dx{\eta}$.
We test Eq.~\eqref{eq: new equation density} against $\partial_t w_\es$ to obtain
\begin{equation*}
\int_{\R^d} \partialt{\varrho_\es} \partialt{w_\es}\dx{x} - \frac{\gamma}{\gamma+1} \int_{\R^d} \Delta w_\es \partialt{w_\es} \dx{x}= \int_{\R^d} \varrho_\es \curlyR_\es \partialt{w_\es}\dx{x}.
\end{equation*}
Now we treat each term individually, to obtain
\begin{align*}
   \int_{\R^d} \partialt{\varrho_\es} \partialt{w_\es}\dx{x} =& \int_{\R^d} \partialt{\varrho_\es} \partialt{\varrho_\es^{\gamma+1}} \dx{x}+ \es \frac{\gamma+1}{\gamma}\int_{\R^d} \left|\partialt{\varrho_\es}\right|^2 \dx{x}
   \\[0.2em]
   =&(\gamma+1) \int_{\R^d} \varrho_\es^\gamma \left|\partialt{\varrho_\es}\right|^2 \dx{x}+ \es \frac{\gamma+1}{\gamma}\int_{\R^d} \left|\partialt{\varrho_\es}\right|^2\dx{x},\\[0.5em]
    -\frac{\gamma}{\gamma+1}\int_{\R^d} \Delta w_\es \partialt{w_\es}\dx{x} =& \frac{\gamma}{\gamma+1} \ddt \int_{\R^d} \frac{|\nabla w_\es|^2}{2}\dx{x},\\[0.5em]
   \int_{\R^d} \varrho_\es\curlyR_\es \partialt{w_\es}\dx{x} =& \int_{\R^d} \varrho_\es\curlyR_\es \partialt{\varrho_\es^{\gamma+1}}\dx{x} +\es \frac{\gamma+1}{\gamma} \int_{\R^d} \varrho_\es\curlyR_\es \partialt{\varrho_\es}\dx{x}\\[0.2em]
   \leq& \ \frac{\gamma+1}{2}\int_{\R^d} \varrho_\es^\gamma \left|\partialt{\varrho_\es}\right|^2\dx{x} + \frac{\gamma+1}{2}\int_{\R^d} \varrho_\es^{\gamma+2} \curlyR_\es^2\dx{x}\\[0.2em]
   &\quad+ \frac{\es}{2} \frac{\gamma+1}{\gamma} \int_{\R^d} \varrho_\es^2 \curlyR_\es^2\dx{x} + \frac{\es}{2} \frac{\gamma+1}{\gamma}\int_{\R^d} \left|\partialt{\varrho_\es}\right|^2\dx{x}.
\end{align*}
Therefore, we obtain
\begin{equation}\label{eq: 3 bounds}
    \sup_{t\in[0,T]} \int_{\R^d} |\nabla w_\es(t)|^2\dx{x} + \frac{\es}{2}\frac{\gamma+1}{\gamma}\iint_{Q_T} \left|\partialt{\varrho_\es}\right|^2\dx{x}\dx{t} + \frac{\gamma+1}{2}\iint_{Q_T} \varrho_\es^\gamma \left|\partialt{\varrho_\es}\right|^2 \dx{x}\dx{t}\leq C,
\end{equation}
where $C$ depends on $\|\varrho_\es\|_{\infty}$ and $\|\curlyR_\es\|_{\infty}$.
Since $\big|\partial_t \varrho_\es^{\frac{\gamma+2}{2}}\big|^2= \frac{(\gamma+2)^2}{4}\varrho_\es^\gamma |\partial_t \varrho_\es|^2$, from Eq.~\eqref{eq: 3 bounds} we have
\begin{align*}
    \partial_t \varrho_\es^{\frac{\gamma+2}{2}} \in L^2(Q_T), \qquad
    \sqrt{\es} \partial_t \varrho_\es \in L^2(Q_T), \qquad
    \nabla w_\es \in L^\infty(0,T; L^2({\R^d})).
\end{align*}
It follows easily from the boundedness of $\varrho_\es$, that $\partial_t\varrho_\es^{\gamma+1}\in L^2(Q_T).$ Hence, $\partial_t w_\es \in L^2(Q_T)$. Thanks to the bound on $\nabla w_\es$ and the Aubin-Lions lemma, $w_\es$ is precompact in $C([0,T], L^2({\R^d}))$. Consequently, $\varrho_\es^{\gamma+1}$ is also precompact in $C([0,T], L^2({\R^d}))$, since we have
\begin{equation*}
    \int_{\R^d} \left|\varrho_\es^{\gamma+1}(t) - \varrho^{\gamma+1}(t)\right|^2\dx{x} \leq  \int_{\R^d} \left|w_\es(t)
    - \varrho^{\gamma+1}(t)\right|^2 \dx{x}+ \int_{\R^d}\left|\es \frac{\gamma+1}{\gamma} \varrho_\es(t)\right|^2 \dx{x}\to 0, \: \text{as } \es \to 0.
\end{equation*}
Once again, thanks to the uniform boundedness of $\varrho_\es$ we infer that $\varrho_\es$ is precompact in $C([0,T], L^q({\R^d}))$ for any $q\geq \gamma+1.$ Therefore
\begin{equation*}
    \int_{\R^d} \prt*{\varrho_\es(x,t)}^q\dx{x} \xrightarrow[]{\es\to 0} \int_{\R^d} (\varrho(x,t))^q\dx{x}, \quad \forall q \geq \gamma+1,
\end{equation*}
and thus the proof is completed.
\end{proof}

\noindent
As already mentioned above, when dealing with cross-diffusion systems such as System~\eqref{eq: prob pheno}, the most involved part is to obtain the compactness needed to pass to the limit in the cross-diffusion term. In the absence of strong compactness of the single species densities, here being the distribution of each phenotypic trait $n_\es(y)$, it is essential to infer strong compactness of $\nabla \varrho_\es^{\gamma+1}$.
For this reason, the following convergence result is the core of the proof.
\begin{lemma}\label{thm: strong convergence grad}
Upon the extraction of a subsequence, we have
\begin{equation*}
    \nabla \varrho_\es^{\gamma+1} \xrightarrow[]{\es\to 0} \nabla \varrho^{\gamma+1} \quad \text{strongly in } L^2(Q_T).
\end{equation*} 
\end{lemma}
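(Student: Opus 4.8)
The plan is to upgrade the weak convergence of $\nabla v_\es$, where $v_\es := \varrho_\es^{\gamma+1}$, to strong convergence in $L^2(Q_T)$ by proving convergence of the $L^2$-norms of the gradients and then invoking the fact that in a Hilbert space weak convergence together with convergence of norms implies strong convergence. First I would record the weak convergence itself: writing $\nabla v_\es = 2\,\varrho_\es^{\frac{\gamma+1}{2}}\nabla\varrho_\es^{\frac{\gamma+1}{2}}$, the uniform bound $0\le\varrho_\es\le\varrho_M$ together with Lemma~\ref{lemma: entropy} shows that $\nabla v_\es$ is bounded in $L^2(Q_T)$; since $\varrho_\es\to\varrho$ strongly in every $L^q(Q_T)$ (Remark~\ref{remark: convergence}), the distributional limit of $\nabla v_\es$ must be $\nabla v$ with $v:=\varrho^{\gamma+1}$, so $\nabla v_\es\rightharpoonup\nabla v$ weakly in $L^2(Q_T)$.

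The core step is an energy identity obtained by testing the density equation against $v_\es$. Rewriting Eq.~\eqref{eq: density rho_es} as $\frac{\gamma}{\gamma+1}\Delta v_\es = \partial_t\varrho_\es - \es\Delta\varrho_\es - \varrho_\es\curlyR_\es$, multiplying by $v_\es=\varrho_\es^{\gamma+1}$, and integrating over $Q_T$ (the spatial integration by parts produces no boundary contribution thanks to the decay/second-moment control) yields
\begin{equation*}
\frac{\gamma}{\gamma+1}\iint_{Q_T}|\nabla v_\es|^2\,\dx{x}\dx{t} = -\frac{1}{\gamma+2}\int_{\R^d}\!\big(\varrho_\es^{\gamma+2}(T)-\varrho_{0,\es}^{\gamma+2}\big)\dx{x} - \es(\gamma+1)\iint_{Q_T}\varrho_\es^{\gamma}|\nabla\varrho_\es|^2\,\dx{x}\dx{t} + \iint_{Q_T}\varrho_\es^{\gamma+2}\curlyR_\es\,\dx{x}\dx{t}.
\end{equation*}
I would then pass each term to the limit. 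The endpoint term converges by Lemma~\ref{lemma: conv integrals} with $q=\gamma+2$, together with $\varrho_{0,\es}\to\varrho_0$ in $L^{\gamma+2}$. The viscosity term is $O(\es)$ because $\varrho_\es^{\gamma}|\nabla\varrho_\es|^2 = \frac{4}{(\gamma+1)^2}\varrho_\es\,|\nabla\varrho_\es^{\frac{\gamma+1}{2}}|^2 \le \frac{4\varrho_M}{(\gamma+1)^2}|\nabla\varrho_\es^{\frac{\gamma+1}{2}}|^2$ is bounded in $L^1(Q_T)$ by Lemma~\ref{lemma: entropy}. The reaction term converges since $\varrho_\es^{\gamma+2}\to\varrho^{\gamma+2}$ strongly in $L^1(Q_T)$ while $\curlyR_\es\rightharpoonup\curlyR$ weak$^*$ in $L^\infty(Q_T)$ (Remark~\ref{remark: convergence}). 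Hence $\frac{\gamma}{\gamma+1}\iint_{Q_T}|\nabla v_\es|^2$ converges to $I:= -\frac{1}{\gamma+2}\int_{\R^d}(\varrho^{\gamma+2}(T)-\varrho_0^{\gamma+2})\dx{x} + \iint_{Q_T}\varrho^{\gamma+2}\curlyR\,\dx{x}\dx{t}$.

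It remains to identify $I$ with $\frac{\gamma}{\gamma+1}\iint_{Q_T}|\nabla v|^2$. For this I would first pass to the limit in Eq.~\eqref{eq: density rho_es} itself: since $v_\es\to v$ strongly in $L^q$ its Laplacian converges in $\curlyD'$, the viscosity term $\es\Delta\varrho_\es\to 0$, and $\varrho_\es\curlyR_\es\rightharpoonup\varrho\curlyR$, so $\varrho$ solves $\partial_t\varrho - \frac{\gamma}{\gamma+1}\Delta v = \varrho\curlyR$ weakly. Crucially this uses only the already-established strong $L^q$-convergence, so there is no circularity with the gradient compactness we are after. Testing this limit equation against $v=\varrho^{\gamma+1}$ and using the chain rule $\int_0^T\langle\partial_t\varrho,\varrho^{\gamma+1}\rangle\dx{t} = \frac{1}{\gamma+2}\int_{\R^d}(\varrho^{\gamma+2}(T)-\varrho_0^{\gamma+2})\dx{x}$ gives exactly $\frac{\gamma}{\gamma+1}\iint_{Q_T}|\nabla v|^2 = I$. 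Combined with the previous paragraph, $\iint_{Q_T}|\nabla v_\es|^2\to\iint_{Q_T}|\nabla v|^2$, and together with the weak convergence $\nabla v_\es\rightharpoonup\nabla v$ this yields the claimed strong convergence in $L^2(Q_T)$.

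The step I expect to be the main obstacle is the justification of this time chain rule at the limit level, where $\partial_t\varrho\in L^2(0,T;H^{-1})$ is paired with the nonlinear power $\varrho^{\gamma+1}\in L^2(0,T;H^1)$: this is an Alt--Luckhaus-type identity that requires care, but it is available here because $\varrho_\es^{\gamma+1}$, and hence $\varrho$, is precompact in $C([0,T];L^2(\R^d))$ (established in the proof of Lemma~\ref{lemma: conv integrals}), so $t\mapsto\int_{\R^d}\varrho^{\gamma+2}(t)\dx{x}$ is continuous and the endpoint values are well defined. The remaining points needing attention are merely routine: the vanishing of the boundary terms at infinity in the spatial integration by parts, controlled by the uniform second-moment bound, and the $O(\es)$ estimate of the viscosity contribution noted above.
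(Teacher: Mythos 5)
Your argument is correct and rests on exactly the same ingredients as the paper's proof, but it is packaged differently. The paper subtracts the limit equation $\partial_t\varrho-\frac{\gamma}{\gamma+1}\Delta\varrho^{\gamma+1}=\varrho\curlyR$ from Eq.~\eqref{eq: density rho_es} and tests the difference against $\varrho_\es^{\gamma+1}-\varrho^{\gamma+1}$, so that $\frac{\gamma}{\gamma+1}\iint_{Q_T}|\nabla(\varrho_\es^{\gamma+1}-\varrho^{\gamma+1})|^2$ appears directly and is shown to vanish; the three error terms it must control (viscosity, duality pairing with $\partial_t(\varrho_\es-\varrho)$, reaction) are precisely the ones you meet, and the duality term is expanded into $\iint\partial_t\varrho_\es^{\gamma+2}/(\gamma+2)$-type pieces plus cross terms, handled with Lemma~\ref{lemma: conv integrals}, the strong convergence of $\varrho_\es^{\gamma+1}$, and the same limit-level chain rule $\int_0^T\langle\partial_t\varrho,\varrho^{\gamma+1}\rangle=\frac{1}{\gamma+2}\int_{\R^d}(\varrho^{\gamma+2}(T)-\varrho^{\gamma+2}(0))$ that you single out as the delicate point (the paper invokes it without comment, so you are if anything more careful there). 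Your route --- two separate energy identities giving $\|\nabla v_\es\|_{L^2}^2\to\|\nabla v\|_{L^2}^2$, then weak convergence plus the Radon--Riesz property --- is equivalent: expanding the square in the paper's computation shows the two arguments coincide term by term. What your version buys is a cleaner separation between the $\es$-level and limit-level identities and a slightly more direct bound on the viscosity term (estimated straight from Lemma~\ref{lemma: entropy} via $\varrho_\es^{\gamma}|\nabla\varrho_\es|^2\le\frac{4\varrho_M}{(\gamma+1)^2}|\nabla\varrho_\es^{(\gamma+1)/2}|^2$ rather than through $\sqrt{\es}\,\nabla\sqrt{\varrho_\es}$); the paper's formulation avoids having to state the Radon--Riesz step but needs the same preliminary facts (the limit equation for $\varrho$ and the weak $L^2$-convergence of the gradients), so neither version is materially shorter. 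No gap.
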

\begin{proof}
For the sake of simplicity, when integrating, we now neglect the symbols $\dx{x},\dx{t}$. Let us consider the limit equation
\begin{equation*}
    \partialt \varrho - \frac{\gamma}{\gamma+1}\Delta \varrho^{\gamma+1} = \varrho \curlyR,
\end{equation*}
and then subtract it from Eq.~\eqref{eq: density rho_es}, to obtain
\begin{equation*}
    \partialt{} (\varrho_\es - \varrho) + \frac{\gamma}{\gamma+1} \Delta (\varrho_\es^{\gamma+1}-\varrho^{\gamma+1}) + \es \Delta \varrho_\es = \varrho_\es \curlyR_\es - \varrho \curlyR.
    \end{equation*}
We test the above equation against $\varrho_\es^{\gamma+1}- \varrho^{\gamma+1}$ and we obtain
\begin{align*}
    \frac{\gamma}{\gamma+1}\iint_{Q_T} |\nabla(\varrho_\es^{\gamma+1}-\varrho^{\gamma+1})|^2 = &- \es \iint_{Q_T} \nabla \varrho_\es \cdot \nabla(\varrho_\es^{\gamma+1}-\varrho^{\gamma+1}) +\int_0^T \braket{\partial_t (\varrho_\es-\varrho),\varrho_\es^{\gamma+1}-\varrho^{\gamma+1}} \\[0.3em]
    &-\iint_{Q_T} (\varrho_\es \curlyR_\es - \varrho \curlyR)(\varrho_\es^{\gamma+1}-\varrho^{\gamma+1}).
\end{align*}
Let us consider the three terms on the right-hand side individually.
From to the strong convergence of $\varrho_\es$ in any $L^p$-space and the weak$^*$ convergence of $\curlyR_\es$, it directly follows that
\begin{equation*}
    \iint_{Q_T} (\varrho_\es \curlyR_\es - \varrho \curlyR)(\varrho_\es^{\gamma+1}-\varrho^{\gamma+1})\rightarrow 0.
\end{equation*}
Recalling Lemma~\ref{lemma: conv integrals}, the strong convergence of $\varrho_\es^{\gamma+1}$ and the weak convergence of $\partial_t \varrho_\es$ in $L^2(0,T; H^{-1}({\R^d}))$, we have
\begin{align*}
    \int_0^T \!\!\braket{\partial_t (\varrho_\es-\varrho),\varrho_\es^{\gamma+1}-\varrho^{\gamma+1}} = &\iint_{Q_T} \frac{\partial_t \varrho_\es^{\gamma+2}}{\gamma+2}+ \iint_{Q_T} \frac{\partial_t \varrho^{\gamma+2}}{\gamma+2}
    - \int_0^T\!\! \braket{\partial_t \varrho, \varrho_\es^{\gamma+1}}- \int_0^T\!\! \braket{\partial_t \varrho_\es,\varrho^{\gamma+1}}\\
    =&\int_{{\R^d}}\frac{\varrho_\es^{\gamma+2}(T)}{\gamma+2}+\int_{{\R^d}}\frac{\varrho^{\gamma+2}(T)}{\gamma+2} -\int_{{\R^d}}\frac{\varrho_\es^{\gamma+2}(0)}{\gamma+2}-\int_{{\R^d}}\frac{\varrho^{\gamma+2}(0)}{\gamma+2}\\
    &\quad - \int_0^T\!\! \braket{\partial_t \varrho, \varrho_\es^{\gamma+1}}- \int_0^T\!\! \braket{\partial_t \varrho_\es,\varrho^{\gamma+1}}\\
    \to& \ 2\int_{{\R^d}}\frac{\varrho^{\gamma+2}(T)}{\gamma+2} - 2\int_{{\R^d}}\frac{\varrho^{\gamma+2}(0)}{\gamma+2} -2 \int_0^T\!\! \braket{\partial_t \varrho,\varrho^{\gamma+1}} = 0.
\end{align*}
Since from Lemma~\ref{lemma: entropy} we know that $\sqrt{\es}\nabla\sqrt{\varrho_\es}$ and $\nabla\varrho_\es^{\frac{\gamma+1}{2}}$ are bounded in $L^2(Q_T)$, we finally compute
\begin{equation*}
    \es \iint_{Q_T} \nabla \varrho_\es \cdot \nabla(\varrho_\es^{\gamma+1}-\varrho^{\gamma+1})= 4\es \iint_{Q_T} \sqrt{\varrho_\es}\nabla\sqrt{\varrho_\es} \cdot \prt*{\varrho_\es^{\frac{\gamma+1}{2}}\nabla \varrho_\es^{\frac{\gamma+1}{2}} - \varrho^{\frac{\gamma+1}{2}} \nabla\varrho^{\frac{\gamma+1}{2}}}\leq \sqrt{\es} C \to 0,
\end{equation*}
and this concludes the proof.
\end{proof}
\noindent
Having proved the $L^2$-strong convergence of $\nabla \varrho_\es^{\gamma+1}$, we can now show that the limit of the sequence $(n_\es,\varrho_\es)$ is a solution of System~\eqref{eq: prob pheno}.

\bigskip 
\begin{proof}[Proof of Theorem~\ref{thm: existence pheno}]
For all $\varphi \in C([0,1]; C_{comp}^1([0,T)\times{\R^d}))$, the variational formulation of System~\eqref{eq: eq n_es} can be written as
 \begin{align} 
    \nonumber  - \int_0^1 &\int_{\R^d} n_\es(y,x,t)\partialt{\varphi(y,x,t)} \dx{x}\dx{y}+ \int_0^1\iint_{Q_T} n_\es(y,x,t) \nabla p_\es(x,t)\cdot  \nabla\varphi(y,x,t)\dx{x}\dx{t}\dx{y} \\[0.3em]
  =&- \es \int_0^1\!\iint_{Q_T} \nabla n_\es(y,x,t) \cdot  \nabla\varphi(y,x,t)\dx{x}\dx{t}\dx{y} \label{eq: prob pheno variational formulation epsilon}\\[0.3em]
  \nonumber    &+\int_0^1\iint_{Q_T}
n_\es(y,x,t) R(y, p_\es) \varphi(y,x,t) \dx{x}\dx{t}\dx{y} + \int_0^1\!\int_{\R^d} n_{0,\es}(y,x,t)\varphi(y,x,0) \dx{x}\dx{y}.
 \end{align}
As we already proved, there exists a bounded non-negative function $\sigma=\sigma(y,x,t)$ such that, up to a subsequence,
\begin{equation*}
    \sigma_\es\rightharpoonup \sigma \quad \text{ weakly}^* \text{ in } L^\infty([0,1]\times Q_T).
\end{equation*}
Therefore, from Lemma~\ref{thm: strong convergence grad} we infer
\begin{equation}\label{eq: conv nabla term}
\begin{split}
n_\es\nabla p_\es &= n_\es\nabla \varrho_\es^\gamma\\[0.3em]
    &= \sigma_\es\varrho_\es\nabla\varrho_\es^\gamma \\[0.3em]
    &= \sigma_\es\frac{\gamma}{\gamma+1} \nabla \varrho_\es^{\gamma+1} \xrightharpoonup[]{\es\to 0} \sigma \frac{\gamma}{\gamma+1}\nabla \varrho^{\gamma+1}, \quad \text{weakly in } L^2([0,1]\times Q_T).
    \end{split}
\end{equation}
Let us notice that $n(y,x,t)=\sigma(y,x,t)\varrho(x,t)$ almost everywhere, since we can pass to the limit $\varepsilon\to 0$ in the equation $n_\es(y,x,t)=\sigma_\es(y,x,t)\varrho_\es(x,t)$. 
Finally, using Eq.~\eqref{eq: conv nabla term}, Remark~\ref{remark: convergence} and passing to the limit in Eq.~\eqref{eq: prob pheno variational formulation epsilon} the proof is completed.
\end{proof}

\section{Incompressible limit}
\label{sec: incompressible limit}
Thanks to the result proven in the previous section, \cf Theorem~\ref{thm: existence pheno}, we know that for each $\gamma>1$ there exists $(n_\gamma, \varrho_\gamma, p_\gamma)$ that satisfies following equations
\begin{equation}\label{eq: weak form n gamma}
\begin{split}
      &-\int_0^1 \int_{\Omega} n_\gamma(y,x,t)\partialt{\varphi(y,x,t)} \dx{x}\dx{y} +\int_0^1\iint_{\Omega_T} n_\gamma(y,x,t) \nabla p_\gamma(x,t)\cdot  \nabla\varphi(y,x,t)\dx{x}\dx{t}\dx{y} \\
      &\qquad\qquad\;
      =\int_0^1\iint_{\Omega_T} n_\gamma(y,x,t) R(y, p_\gamma) \varphi(y,x,t) \dx{x}\dx{t}\dx{y} + \int_0^1\int_{\Omega} n_{\gamma,0}(y,x,t)\varphi(y,x,0) \dx{x}\dx{y}, 
\end{split}
\end{equation}
 for all $\varphi \in C([0,1]; C_{comp}^1([0,T)\times\Omega))$
\begin{equation}
\label{eq: weak form varrho}  
\begin{split}
  -\iint_\OmegaT& \varrho_\gamma(x,t) \partialt \psi(x,t) \dx{x} \dx{t}+ \frac{\gamma}{\gamma+1}\iint_\OmegaT \nabla v_\gamma(x,t) \cdot \nabla \psi(x,t)  \dx{x} \dx{t} =\\[0.6em] &\iint_\OmegaT\prt*{\int_0^1 n_\gamma(x,t) R(y,p_\gamma(x,t)) \dx{y}}  \psi(x,t)\dx{x}\dx{t} +\int_\Omega \varrho_{\gamma,0} (x) \psi(x,0)\dx{x},
    \end{split}
\end{equation}
for all test functions $\psi\in C_{comp}^1([0,T)\times\Omega)$, where $v_\gamma = \varrho_\gamma^{\gamma+1}$.

\bigskip
\noindent
The goal of this section is to study the incompressible limit $\gamma\to\infty$ and recover the weak formulation of a Hele-Shaw free boundary problem.
To this end, we have to infer the compactness on the main quantities needed to pass to the limit in (\ref{eq: weak form n gamma}, \ref{eq: weak form varrho}). While for the first equation, the strong compactness of $\nabla p_\gamma$ is needed, weak compactness of $\nabla v_\gamma$ is sufficient in order to pass to the limit in Eq.~\eqref{eq: weak form varrho}, as stated in Theorem~\ref{thm: limit problem}.

The second main result is the \textit{complementarity relation},  Theorem~\ref{thm: complem relation}, which allows to establish the limit pressure as the solution of an elliptic equation. In order to prove it we need to infer the strong compactness of $\nabla p_\gamma$, which also allows us to pass to the limit in Eq.~\eqref{eq: weak form n gamma}. 

The following part of this section is devoted to the proof of Theorem~\ref{thm: limit problem} and Theorem~\ref{thm: complem relation}. Since we are not able to prove any control on $\partial_t p_\gamma$, it is not possible to directly prove the strong compactness of $p_\gamma$ (Corollary~\ref{coro: strong c p}) which is necessary in order to find the limit of the reaction term. For this reason we will be able to identify the limit only after the proof of the strong compactness of $\nabla v_\gamma$ (Lemma~\ref{lemma: strong convergence grad vgamma}).

\bigskip

\subsection{Proof of Theorem~\ref{thm: limit problem}}

\begin{remark}[Weak$^*$ convergence as $\gamma\to \infty$]
Let us point out that the $L^\infty$-bounds \eqref{eq: l infty rho p},\eqref{eq: l infty sigma} and \eqref{eq: l infty n} proven in Subsection~\ref{subsec: a priori} are also uniform with respect to $\gamma$. Therefore, there exist $n_\infty, \varrho_\infty, p_\infty$ and $v_\infty$ such that, after the extraction of a subsequence Eqs.~\eqref{convergence of n}-\eqref{convergence of p} hold. Moreover, there exists $\curlyH_\infty$ such that 
\begin{equation}\label{conv curly H}
n_\gamma R(y,p_\gamma) \rightharpoonup \curlyH_\infty  \text{ weakly$^*$ in } L^\infty((0,1)\times \Omega_T).
\end{equation}
\end{remark}

\begin{remark}[$H^1$-bounds of $p_\gamma$ and $v_\gamma$]
Multiplying the equation on the density, Eq.~\eqref{eq: density rho}, by $\gamma \varrho_\gamma^{\gamma-1}$, it is immediate to see that the pressure satisfies
\begin{equation}\label{eq: pressure pheno}
    \partialt {p_\gamma}= \gamma p_\gamma (\Delta p_\gamma + \curlyR_\gamma)+ |\nabla p_\gamma|^2.
\end{equation}
Hence, the pressure gradient is bounded in $L^2(\Omega_T)$ as shown by integrating by parts in space to get
\begin{equation*}
    \ddt\int_{\Omega} p_\gamma \dx{x} =(1- \gamma) \int_\Omega |\nabla p_\gamma|^2  \dx{x}+\gamma \int_\Omega p_\gamma \curlyR_\gamma \dx{x},
\end{equation*}
which implies
\begin{equation*}
    (\gamma -1) \iint_{\Omega_T} |\nabla p_\gamma|^2  \dx{x} \dx{t}\leq \gamma \|\curlyR_\gamma\|_{L^\infty(\Omega_T)} \|p_\gamma\|_{L^1(\Omega_T)} +\|p_0\|_{L^1(\Omega)}.
\end{equation*}
  Therefore, for all $\gamma>1$, it holds
  \begin{equation}
    \label{eq: grad p in L2}
    p_\gamma \in L^2(0,T;H^1(\Omega)).
  \end{equation}
By the definition of $v_\gamma$, we have
\begin{equation}\label{eq L2 bound grad v}
    \nabla v_\gamma = \frac{\gamma+1}{\gamma} p_\gamma^{\frac{1}{\gamma}} \nabla p_\gamma =  \frac{\gamma+1}{\gamma} \varrho_\gamma \nabla p_\gamma  \in L^2(\Omega_T),
\end{equation}
uniformly in  $\gamma$, and therefore Eq.~\eqref{convergence of nabla v} is proven.  
\end{remark}

\begin{corollary}
The limit triplet $(n_\infty, \varrho_\infty, p_\infty)$ satisfies
\begin{equation}
\label{eq: limit varrho}
       \partialt{\varrho_\infty} = \Delta v_\infty + \int_0^1\curlyH_\infty(y) \dx{y}, \text{ in } \;\; \curlyD'(\R^d\times(0,\infty)),
\end{equation}
where $\curlyH_\infty=\curlyH_\infty(y,x,t)$ is the weak limit of $n_\gamma R(y,p_\gamma)$.
\end{corollary}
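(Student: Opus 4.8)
The plan is to pass to the limit $\gamma\to\infty$ directly in the weak formulation~\eqref{eq: weak form varrho} of the equation for $\varrho_\gamma$, term by term, using \emph{only} the weak and weak$^*$ convergences already collected and no strong compactness. Fix a test function $\psi\in C_{comp}^1([0,T)\times\Omega)$; for the distributional statement in $\curlyD'(\R^d\times(0,\infty))$ one instead takes $\psi\in C_c^\infty(\R^d\times(0,\infty))$ and extends $\varrho_\gamma$ and $v_\gamma$ by zero outside $\Omega$, which is legitimate because both are supported in $\Omega$ uniformly in $\gamma$. The whole statement then reduces to controlling four integrals.

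First I would treat the time-derivative term: since $\partialt\psi\in L^1(\OmegaT)$ and $\varrho_\gamma\rightharpoonup\varrho_\infty$ weak$^*$ in $L^\infty(\OmegaT)$ by~\eqref{convergence of varrho}, we get $-\iint_\OmegaT\varrho_\gamma\,\partialt\psi\to-\iint_\OmegaT\varrho_\infty\,\partialt\psi$. For the diffusion term I would write it as $\frac{\gamma}{\gamma+1}\iint_\OmegaT\nabla v_\gamma\cdot\nabla\psi$ and combine the scalar limit $\frac{\gamma}{\gamma+1}\to1$ with the weak convergence $\nabla v_\gamma\rightharpoonup\nabla v_\infty$ in $L^2(\OmegaT)$ from~\eqref{convergence of nabla v}, tested against $\nabla\psi\in L^2(\OmegaT)$; the elementary fact that $a_\gamma f_\gamma\rightharpoonup af$ whenever $a_\gamma\to a$ in $\R$ and $f_\gamma\rightharpoonup f$ weakly disposes of the product and yields the limit $\iint_\OmegaT\nabla v_\infty\cdot\nabla\psi$.

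For the reaction term I would invoke~\eqref{conv curly H}, namely $n_\gamma R(y,p_\gamma)\rightharpoonup\curlyH_\infty$ weak$^*$ in $L^\infty((0,1)\times\OmegaT)$. Testing against the $y$-independent function $\psi(x,t)\in L^1((0,1)\times\OmegaT)$ and using Fubini gives $\int_0^1 n_\gamma R(y,p_\gamma)\dx{y}\rightharpoonup\int_0^1\curlyH_\infty\dx{y}$ weak$^*$ in $L^\infty(\OmegaT)$, hence convergence of the corresponding integral against $\psi$. Finally, the initial-data term converges by the assumed strong convergence $\varrho_{\gamma,0}\to\varrho_0$ in $L^1(\Omega)$.

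Collecting the four limits turns~\eqref{eq: weak form varrho} into
\[
-\iint_\OmegaT\varrho_\infty\,\partialt\psi+\iint_\OmegaT\nabla v_\infty\cdot\nabla\psi=\iint_\OmegaT\prt*{\int_0^1\curlyH_\infty\dx{y}}\psi+\int_\Omega\varrho_0\,\psi(\cdot,0),
\]
which is exactly the weak form of $\partialt\varrho_\infty=\Delta v_\infty+\int_0^1\curlyH_\infty\dx{y}$; restricting to test functions supported in $\R^d\times(0,\infty)$ kills the initial-data term and gives~\eqref{eq: limit varrho} in $\curlyD'$. I do not expect a genuine obstacle: every passage is a weak/weak$^*$ limit against a fixed test function, and the only point requiring a word of care is the interchange of the $y$-integration with the weak$^*$ limit, which is resolved precisely by using $y$-independent test functions. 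Note in particular that $\curlyH_\infty$ is at this stage only characterised abstractly as a weak$^*$ limit; its identification with $\int_0^1 n_\infty R(y,p_\infty)\dx{y}$ is deferred to the later strong-compactness results (Lemma~\ref{lemma: strong convergence grad vgamma}) and is not needed for this corollary.
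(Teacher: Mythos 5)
Your proposal is correct and follows exactly the paper's own (one-line) argument: pass to the limit term by term in the weak formulation \eqref{eq: weak form varrho} using the weak$^*$ convergence of $\varrho_\gamma$, the weak convergence of $\nabla v_\gamma$, the weak$^*$ convergence \eqref{conv curly H} of the reaction term, and the assumed $L^1$ convergence of the initial data. You have simply spelled out the details (including the correct observation that $\curlyH_\infty$ is only identified later via strong compactness), so there is nothing to add.
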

\begin{proof}
The result comes from passing to the limit in Eq.~\eqref{eq: weak form varrho} using the convergence results~\eqref{convergence of varrho}, \eqref{convergence of nabla v}, and \eqref{conv curly H}.
\end{proof}

\noindent
As mentioned above, in order to conclude the proof of \eqref{limit weak eq for varrho} we have to show that $\curlyH_\infty= n_\infty R(y,p_\infty)$. This will be proven in the following subsection, \cf Eq.~\eqref{eq: H}. At this moment, we are not able to identify the limit since we do not have the strong compactness of $p_\gamma$.

\begin{remark}[$H^{-1}$-bound of the density time-derivative]
From the previous bounds and Eq.~\eqref{eq: new density}, we have 
\begin{equation}\label{eq: dt rho H-1}
    \partialt{\varrho_\gamma} \in L^2(0,T; H^{-1}(\Omega)). 
\end{equation}
\end{remark}

\begin{corollary}
The limit solution satisfies Eq.~\eqref{saturation relation pheno}. 
\end{corollary}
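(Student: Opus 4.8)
The plan is to establish the relation in its equivalent form $v_\infty=p_\infty$ and only then convert it into the product form \eqref{saturation relation pheno}. The whole subtlety is that $\varrho_\infty$, $p_\infty$ and $v_\infty$ are merely weak$^*$ limits, while $\varrho_\gamma=v_\gamma^{1/(\gamma+1)}=p_\gamma^{1/\gamma}$, so nonlinear relations between them cannot be passed to the limit for free.

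First I would bound $v_\gamma-p_\gamma=\varrho_\gamma^{\gamma+1}-\varrho_\gamma^\gamma=p_\gamma(\varrho_\gamma-1)$ from both sides, uniformly in $\Omega_T$ and in $\gamma$. For an upper bound I use the $L^\infty$-estimate \eqref{eq: l infty rho p}: where $\varrho_\gamma\ge 1$ one has $0\le p_\gamma(\varrho_\gamma-1)\le p_M\,(p_M^{1/\gamma}-1)$, and where $\varrho_\gamma<1$ the expression is negative, so $v_\gamma-p_\gamma\le p_M(p_M^{1/\gamma}-1)$. For a lower bound I apply Young's inequality to $p_\gamma=\varrho_\gamma^\gamma\cdot 1$ with conjugate exponents $\tfrac{\gamma+1}{\gamma}$ and $\gamma+1$, which gives $p_\gamma\le \tfrac{\gamma}{\gamma+1}v_\gamma+\tfrac{1}{\gamma+1}$ and hence $v_\gamma-p_\gamma\ge \tfrac{v_\gamma-1}{\gamma+1}\ge-\tfrac{1}{\gamma+1}$. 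Both bounds vanish as $\gamma\to\infty$, so $v_\gamma-p_\gamma\to 0$ uniformly and in particular strongly in $L^1(\Omega_T)$; combined with the weak$^*$ convergence \eqref{convergence of p} of $p_\gamma$ and the (bounded) weak$^*$ convergence $v_\gamma\rightharpoonup v_\infty$ implied by \eqref{convergence of nabla v} and the $L^\infty$-bound, this yields $v_\infty=p_\infty$ a.e. in $\Omega_T$.

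To convert this into \eqref{saturation relation pheno} I need $v_\infty=\varrho_\infty p_\infty$, which is where strong compactness is unavoidable. Granting the strong $L^2$-convergence of $\nabla v_\gamma$, Lemma~\ref{lemma: strong convergence grad vgamma}, and noting that $\varrho_\gamma$ and hence $v_\gamma=\varrho_\gamma^{\gamma+1}$ vanish on $\partial\Omega$, Poincaré's inequality immediately upgrades it to $v_\gamma\to v_\infty$ strongly in $L^2(\Omega_T)$, hence (up to a subsequence) a.e. in $\Omega_T$. Then on $\{v_\infty>0\}=\{p_\infty>0\}$ we have $v_\gamma\to v_\infty>0$ a.e., so $\varrho_\gamma=v_\gamma^{1/(\gamma+1)}\to 1$ a.e. (the base tends to a positive limit while the exponent vanishes), and by dominated convergence $\varrho_\infty=1$ a.e. there. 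Since $p_\infty\ge 0$ and $\varrho_\infty\le 1$ (passing to the limit in \eqref{eq: l infty rho p}), the nonnegative quantity $p_\infty(1-\varrho_\infty)$ vanishes both where $p_\infty=0$ and where $\varrho_\infty=1$, which is exactly \eqref{saturation relation pheno}. The genuine obstacle is therefore Lemma~\ref{lemma: strong convergence grad vgamma} itself: since we have no uniform control on $\partial_t p_\gamma$ (nor on $\partial_t v_\gamma=(\gamma+1)\,p_\gamma\,\partial_t\varrho_\gamma$, which carries a diverging factor $\gamma$), the strong compactness of $\nabla v_\gamma$ cannot be obtained from Aubin--Lions and must instead be extracted from an energy-dissipation identity, in the spirit of Lemma~\ref{thm: strong convergence grad}; by contrast, the two-sided bound of the first step is elementary and self-contained.
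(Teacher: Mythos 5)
Your first step is correct and is in fact a neat alternative to the paper's argument: the two-sided bound $-\tfrac{1}{\gamma+1}\le v_\gamma-p_\gamma\le p_M\bigl(p_M^{1/\gamma}-1\bigr)_+$ gives $v_\infty=p_\infty$ directly, whereas the paper only extracts the inequality $v_\infty\ge p_\infty$ from weak lower semi-continuity of $x\mapsto x^{\frac{\gamma+1}{\gamma}}$ (which suffices for its purposes). The problem is your second step. To pass from $v_\infty=p_\infty$ to $p_\infty(1-\varrho_\infty)=0$ you must identify the weak limit of the product $\varrho_\gamma p_\gamma=v_\gamma$ as $\varrho_\infty p_\infty$, and you do this by invoking Lemma~\ref{lemma: strong convergence grad vgamma} to get strong (hence a.e.) convergence of $v_\gamma$. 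But in this paper the proof of Lemma~\ref{lemma: strong convergence grad vgamma} \emph{uses} \eqref{saturation relation pheno}: the inequality
\begin{equation*}
\iint_\OmegaT \bigl(\varrho_\infty(x,t+\es)-\varrho_\infty(x,t)\bigr)\,v_\infty \dx{x}\dx{t}
=\iint_\OmegaT \bigl(\varrho_\infty(x,t+\es)-1+1-\varrho_\infty(x,t)\bigr)\,v_\infty \dx{x}\dx{t}\le 0
\end{equation*}
needs $(1-\varrho_\infty)v_\infty=(1-\varrho_\infty)p_\infty=0$, i.e.\ precisely the saturation relation you are trying to prove. Your argument is therefore circular within the paper's logical architecture; this is also why the corollary is placed in the proof of Theorem~\ref{thm: limit problem}, before the strong compactness of $\nabla v_\gamma$ is available.

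The non-circular route, and the one the paper takes, is compensated compactness: since $\partial_t\varrho_\gamma$ is bounded in $L^2(0,T;H^{-1}(\Omega))$ by \eqref{eq: dt rho H-1} and $p_\gamma$ is bounded in $L^2(0,T;H^1(\Omega))$ by \eqref{eq: grad p in L2}, Theorem~\ref{thm: comp comp} gives $v_\gamma=\varrho_\gamma p_\gamma\rightharpoonup \varrho_\infty p_\infty$, so $v_\infty=\varrho_\infty p_\infty$ using only weak information on each factor. Combined with your (correct) identity $v_\infty=p_\infty$ and the bound $0\le\varrho_\infty\le 1$ inherited from $\varrho_\gamma\le p_M^{1/\gamma}$, this yields $\varrho_\infty p_\infty=p_\infty$, which is \eqref{saturation relation pheno}. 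If you replace your appeal to Lemma~\ref{lemma: strong convergence grad vgamma} by this div--curl type argument, your proof closes.
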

\begin{proof}
Let us recall that the non-negativity of $n_\gamma$, and consequently of $\varrho_\gamma$ and $p_\gamma$, has already been proven in the previous sections. Since $\varrho_\gamma \leq \varrho_M = (p_M)^{1/\gamma}$ we have $0\leq \varrho_\infty \leq 1$. 

By definition we have $v_\gamma= \varrho_\gamma p_\gamma$. Thanks to Eqs.~\eqref{eq: grad p in L2} and \eqref{eq: dt rho H-1} we can apply the compensated compactness theorem stated in Appendix~\ref{app: cc}, \cf Theorem \ref{thm: comp comp}, and infer
\begin{equation*}
  \int_\OmegaT  v_\gamma \varphi \dx{x}\dx{t}\rightarrow \int_\OmegaT\varrho_\infty p_\infty \varphi\dx{x}\dx{t},
\end{equation*}
for every $\varphi\in C(0,T; C^1(\Omega))$.
Hence $v_\infty= \varrho_\infty p_\infty,$ almost everywhere.
Finally, by weak lower semi-continuity of convex functions we have
\begin{equation*}
   \lim_{\gamma\to\infty} v_\gamma=   \liminf_{\gamma\to\infty} p_\gamma^{\frac{\gamma+1}{\gamma}}\geq p_\infty.
\end{equation*}
For the sake of completeness, we include here the full argument. Let us denote $\Psi_\gamma(x)= x^{\frac{\gamma+1}{\gamma}}$, $\gamma>1.$ Let $\psi_\delta=\psi_\delta(x)$ be a convex function such that $\psi_\delta(x)\to x$ as $\delta\to 0,$ and
\begin{equation*}
    \psi_\delta (x)\leq \Psi_\gamma(x), \qquad \text{for } \gamma \text{ large enough}.
\end{equation*}
For example, we could take
\begin{equation*}
\psi_\delta(x):=\begin{cases}
    0, &\text{for } 0\leq x \leq \delta,\\
    x-\delta &\text{for } x>\delta.
    \end{cases}
\end{equation*}
Therefore, we have
\begin{equation*}
   \psi_\delta (p_\infty)\leq \liminf_{\gamma\to\infty}\psi_\delta (p_\gamma)\leq \liminf_{\gamma\to\infty}\Psi_\gamma(p_\gamma)=\liminf_{\gamma\to\infty}p_\gamma^{\frac{\gamma+1}{\gamma}}. 
\end{equation*}
Since we chose $\delta>0$ arbitrarily, we take $\delta\to 0$ to obtain 
\begin{equation*}
   p_\infty\leq\liminf_{\gamma\to\infty}p_\gamma^{\frac{\gamma+1}{\gamma}}. 
\end{equation*}
Hence $\varrho_\infty p_\infty= v_\infty \geq p_\infty$, which implies $\varrho_\infty p_\infty = p_\infty$.

\end{proof}

\subsection{Proof of Theorem~\ref{thm: complem relation}}
In order to prove the complementarity relation, \cf Theorem~\ref{thm: complem relation}, one possible strategy is to infer the strong convergence of $\nabla p_\gamma$ through regularity, see for instance \cite{DP, DavidSchmidtchen2021, BPPS}. Since those methods cannot be applied to our case, we follow the strategy of \cite{LX2021}, directly proving the strong compactness of $\nabla v_\gamma$.
The core of the proof is given by the following lemma.
\begin{lemma}\label{lemma: strong convergence grad vgamma} Up to a subsequence, as $\gamma\to\infty$, we have
\begin{align}
  \label{eq: compactness of grad v}   \nabla v_\gamma \rightarrow \nabla v_\infty \quad &\text{ strongly in } L^2(\Omega_T).
\end{align}
\end{lemma}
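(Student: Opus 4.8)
The plan is to prove strong $L^2$-compactness of $\nabla v_\gamma$ by testing the equation for $v_\gamma$ against the difference $v_\gamma - v_\infty$ and showing that the resulting Dirichlet energy difference vanishes in the limit, in the spirit of Lemma~\ref{thm: strong convergence grad} but now in the regime $\gamma\to\infty$. First I would record the weak formulation for $v_\gamma$: from Eq.~\eqref{eq: new density} the density $\varrho_\gamma$ solves $\partial_t\varrho_\gamma = \frac{\gamma}{\gamma+1}\Delta v_\gamma + \int_0^1 n_\gamma R(y,p_\gamma)\dx{y}$ with homogeneous Dirichlet data, and by \eqref{eq L2 bound grad v} we already have $\nabla v_\gamma \rightharpoonup \nabla v_\infty$ weakly in $L^2(\OmegaT)$ together with the convergences \eqref{convergence of varrho}--\eqref{conv curly H}. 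The strategy is to compute $\iint_{\OmegaT}\abs{\nabla v_\gamma}^2$ in terms of quantities whose limits we control, and compare with $\iint_{\OmegaT}\abs{\nabla v_\infty}^2$; since weak convergence already gives $\liminf \iint \abs{\nabla v_\gamma}^2 \ge \iint \abs{\nabla v_\infty}^2$, it suffices to establish the matching $\limsup$ inequality, which upgrades weak to strong convergence.

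The concrete computation I would carry out is to test the $v_\gamma$-equation against $v_\gamma$ itself (integrating by parts in space, using the Dirichlet condition to discard boundary terms) and separately pass to the limit in the pairing $\int_0^T\skp{\partial_t\varrho_\gamma, v_\gamma}$. The key algebraic identity, exactly as in the $\es\to 0$ argument, is that $\partial_t\varrho_\gamma$ paired against $v_\gamma = \varrho_\gamma^{\gamma+1}$ produces a time-derivative of $\varrho_\gamma^{\gamma+2}$ up to the constant $\frac{1}{\gamma+2}$, so the time term contributes only boundary-in-time values plus a term controlled by the (uniformly bounded) reaction $\curlyH_\infty$. I would then identify the limit of the Dirichlet energy: writing $\frac{\gamma}{\gamma+1}\iint\abs{\nabla v_\gamma}^2 = -\int_0^T\skp{\partial_t\varrho_\gamma, v_\gamma} + \iint (\int_0^1 n_\gamma R\dx{y})\, v_\gamma$, every term on the right has a computable limit thanks to Lemma~\ref{lemma: conv integrals} (or its $\gamma$-analogue controlling $\iint \varrho_\gamma^{\gamma+2}(T)$), the convergence $v_\gamma\to v_\infty$ strongly in $L^2$ via compensated compactness already established in the previous corollary, and \eqref{conv curly H}. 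Matching this limit to the expression obtained by testing the limit equation $\partial_t\varrho_\infty = \Delta v_\infty + \int_0^1 \curlyH_\infty\dx{y}$ against $v_\infty$ then yields $\lim \iint\abs{\nabla v_\gamma}^2 = \iint\abs{\nabla v_\infty}^2$, giving strong convergence.

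The main obstacle I anticipate is controlling the time-derivative pairing uniformly in $\gamma$: unlike the $\es\to0$ setting, here the coefficient $\gamma/(\gamma+1)\to 1$ is harmless, but the quantity $\iint_{\OmegaT}\varrho_\gamma^{\gamma}\abs{\partial_t\varrho_\gamma}^2$ that made the earlier argument clean is no longer uniformly bounded, and one only has $\partial_t\varrho_\gamma \in L^2(0,T;H^{-1}(\Omega))$ from \eqref{eq: dt rho H-1}. Consequently the pairing $\int_0^T\skp{\partial_t\varrho_\gamma, v_\gamma}$ must be handled by the duality $H^{-1}$--$H^1$ bound on $v_\gamma$ (available by \eqref{eq L2 bound grad v}) combined with the convergence of the boundary-in-time integrals $\int_\Omega \varrho_\gamma^{\gamma+2}(T)$ and $\int_\Omega\varrho_\gamma^{\gamma+2}(0)$; establishing that these endpoint integrals converge, and that the cross terms $\int_0^T\skp{\partial_t\varrho, v_\gamma}$ and $\int_0^T\skp{\partial_t\varrho_\gamma, v_\infty}$ both converge to $\int_0^T\skp{\partial_t\varrho_\infty, v_\infty}$, is where the care is needed. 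A secondary subtlety is justifying that $v_\gamma\to v_\infty$ strongly in $L^2(\OmegaT)$ (not merely weakly) to pass to the limit in the reaction pairing; this I would obtain from the compensated-compactness identification $v_\infty = \varrho_\infty p_\infty$ together with the saturation relation \eqref{saturation relation pheno}, which forces $v_\infty = p_\infty$ and closes the loop.
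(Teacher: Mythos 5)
Your overall skeleton --- testing the equation for $\varrho_\gamma$ against $v_\gamma$ (or $v_\gamma-v_\infty$), exploiting the identity $\partial_t\varrho_\gamma\, v_\gamma = \frac{1}{\gamma+2}\partial_t\varrho_\gamma^{\gamma+2}$ so that the time term vanishes as $\gamma\to\infty$, and reducing strong convergence to a matching $\limsup$ bound on the Dirichlet energy --- is the same as the paper's. But there is a genuine gap at the heart of your argument: you pass to the limit in the reaction pairing $\iint_{\OmegaT}\bigl(\int_0^1 n_\gamma R(y,p_\gamma)\dx{y}\bigr)\,v_\gamma$ by invoking ``the convergence $v_\gamma\to v_\infty$ strongly in $L^2$ \dots already established in the previous corollary''. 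No such strong convergence is available at this stage. The previous corollary only identifies the weak limit $v_\infty=\varrho_\infty p_\infty$ via compensated compactness, i.e.\ convergence of $\iint v_\gamma\varphi$ against fixed smooth $\varphi$; strong $L^2$ compactness of $v_\gamma$ (and of $p_\gamma$) is Corollary~\ref{coro: strong c p}, which the paper deduces \emph{from} the present lemma via Poincar\'e. Using it here is circular. Without it you face the product of two merely weakly convergent sequences, and compensated compactness cannot be applied to the pair $\bigl(\int_0^1 n_\gamma R(y,p_\gamma)\dx{y},\,v_\gamma\bigr)$ because the time derivative of $R(y,p_\gamma)$ involves $\partial_t p_\gamma$, which is exactly the quantity the paper states it cannot control.

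This is where the paper's proof does its real work: it writes $R(y,p_\gamma)=\bigl(R(y,p_\gamma)-R(y,p_\infty)\bigr)+R(y,p_\infty)$. The first piece, paired with $v_\gamma-v_\infty=p_\gamma(\varrho_\gamma-1)+(p_\gamma-p_\infty)$, is handled by the monotonicity $\partial_p R\le 0$ (which makes the $(p_\gamma-p_\infty)$ contribution non-positive) together with a splitting of $\OmegaT$ into $\{\varrho_\gamma\le 1-\es\}$ and $\{\varrho_\gamma>1-\es\}$ with $\es=1/\sqrt{\gamma}$. The second piece is treated by compensated compactness applied for each fixed $y$ to $u_\gamma=n_\gamma(y)$ --- whose time derivative \emph{is} in $L^2(0,T;H^{-1}(\Omega))$ by its own equation --- against $w_\gamma=R(y,p_\infty)(v_\gamma-v_\infty)$, followed by dominated convergence in $y$. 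None of this appears in your proposal. A second, smaller omission: after the $\limsup$ inequality you still need $\int_0^T\langle\partial_t\varrho_\infty,v_\infty\rangle\le 0$, which does not follow from simply testing the limit equation against $v_\infty$ (there is no time regularity of $v_\infty$ to justify the integration by parts in $t$); the paper obtains it from one-sided difference quotients combined with the saturation relation \eqref{saturation relation pheno}. You mention the saturation relation, but you deploy it for a different (and, as noted, circular) purpose.
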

\begin{proof}
Let us use $v_\gamma - v_\infty$ as a test function in Eq.~\eqref{eq: new density} to obtain
\begin{equation}
    \label{eq: step 1}
    \begin{split}
         \int_\Omega\partialt{\varrho_\gamma} (v_\gamma-v_\infty) \dx{x}+ \frac{\gamma}{\gamma+1}\int_\Omega\nabla v_\gamma \cdot \nabla (v_\gamma - v_\infty)\dx{x}= \int_\Omega\prt*{\int_0^1 n_\gamma R(y,p_\gamma) \dx{y}} (v_\gamma- v_\infty) \dx{x}.
    \end{split}
\end{equation}
We note that
\begin{equation*}
    \int_\Omega\partialt{\varrho_\gamma} v_\gamma \dx{x} = \frac{1}{\gamma+2}\int_\Omega \partialt{\varrho_\gamma^{\gamma+2}} \dx{x}=   \frac{1}{\gamma+2}\ddt\int_\Omega {\varrho_\gamma^{\gamma+2}} \dx{x}.
\end{equation*}
Integrating in time we get
\begin{equation*}
    \iint_{\Omega_T}\partialt{\varrho_\gamma} v_\gamma \dx{x}\dx{t} = \frac{1}{\gamma+2}\int_\Omega {\varrho_\gamma^{\gamma+2}(T)} \dx{x}- \frac{1}{\gamma+2}\int_\Omega {\varrho_\gamma^{\gamma+2}(0)} \dx{x} \to 0,
\end{equation*}
as $\gamma \to \infty$.
Now we compute
\begin{equation}\label{eq: step1.5}
\begin{split}
    \limsup_{\gamma\to\infty} \iint_\OmegaT &|\nabla(v_\gamma- v_\infty)|^2 \dx{x}\dx{t}\\
    \leq& \limsup_{\gamma\to\infty} \prt*{\iint_\OmegaT \nabla v_\gamma \cdot \nabla (v_\gamma -v_\infty)\dx{x}\dx{t} - \iint_\OmegaT \nabla v_\infty \cdot \nabla(v_\gamma-\nabla v_\infty)\dx{x}\dx{t}}\\
    \leq&\limsup_{\gamma\to\infty} \iint_\OmegaT \nabla v_\gamma \cdot \nabla (v_\gamma -v_\infty)\dx{x}\dx{t},
    \end{split}
\end{equation}
where in the last inequality we use the fact that $\nabla v_\gamma$ is weakly compact in $L^2(\OmegaT)$.
From Eq.~\eqref{eq: step 1} we obtain
\begin{equation}\label{eq: step 2}
\begin{split}
  \limsup_{\gamma\to\infty}& \iint_\OmegaT \nabla v_\gamma \cdot \nabla (v_\gamma -v_\infty)\dx{x}\dx{t}\\
  \leq &\limsup_{\gamma\to\infty} \iint_\OmegaT \prt*{\int_0^1 n_\gamma R(y,p_\gamma) \dx{y}} (v_\gamma- v_\infty)\dx{x}\dx{t}+ \limsup_{\gamma\to\infty} \iint_\OmegaT \partialt{\varrho_\gamma} v_\infty \dx{x}\dx{t}\\
  \leq &\limsup_{\gamma\to\infty} \iint_\OmegaT \prt*{\int_0^1 n_\gamma R(y,p_\gamma) \dx{y}} (v_\gamma- v_\infty)\dx{x}\dx{t}+\iint_\OmegaT \partialt{\varrho_\infty} v_\infty \dx{x}\dx{t},
  \end{split}
\end{equation}
where we used the  weak compactness of the density in $L^2(0,T; H^{-1}(\Omega))$ given by Eq.~\eqref{eq: dt rho H-1}. We now treat the first term in the right-hand side of Eq.~\eqref{eq: step 2}.
We add and subtract the same quantity to get
\begin{equation*}
\begin{split}
    \iint_\OmegaT \prt*{\int_0^1 n_\gamma R(y,p_\gamma) \dx{y}} (v_\gamma- v_\infty)\dx{x}\dx{t}=&\underbrace{\iint_\OmegaT \prt*{\int_0^1 n_\gamma (R(y,p_\gamma)-R(y,p_\infty)) \dx{y}} (v_\gamma- v_\infty)\dx{x}\dx{t}}_{\curlyA}\\[0.3em]
    &\qquad + \underbrace{\iint_\OmegaT \prt*{\int_0^1 n_\gamma R(y,p_\infty) \dx{y}} (v_\gamma- v_\infty)\dx{x}\dx{t}}_{\curlyB}.
    \end{split}
\end{equation*}
Our goal is to prove that the right hand side is bounded by some quantity that converges to zero as $\gamma\to\infty$. To deal with $\curlyA$ we use the monotonicity of $R(y,\cdot)$, which is a decreasing function of the pressure.
We rewrite $\curlyA$ as follows
\begin{equation*}
    \begin{split}
 \curlyA&=\iint_\OmegaT \prt*{\int_0^1 n_\gamma (R(y,p_\gamma)-R(y,p_\infty)) \dx{y}} (p_\gamma\varrho_\gamma -v_\infty)\dx{x}\dx{t}   \\
 &=\iint_\OmegaT \prt*{\int_0^1 n_\gamma (R(y,p_\gamma)-R(y,p_\infty))\dx{y}} (p_\gamma(\varrho_\gamma-1) +p_\gamma-p_\infty)\dx{x}\dx{t}  \\
 &=\iint_\OmegaT \prt*{\int_0^1 n_\gamma (R(y,p_\gamma)-R(y,p_\infty))\dx{y}} p_\gamma(\varrho_\gamma-1) \dx{x}\dx{t}  \\
 &\qquad+\iint_\OmegaT \prt*{\int_0^1 n_\gamma (R(y,p_\gamma)-R(y,p_\infty))\dx{y}}( p_\gamma-p_\infty)  \dx{x}\dx{t},
    \end{split}
\end{equation*}
where the last integral is non-positive by the monotonicity of $R$.
Let $\es>0$, we split the remaining term as follows
\begin{equation*}
    \begin{split}
        &\iint_\OmegaT \prt*{\int_0^1 n_\gamma (R(y,p_\gamma)-R(y,p_\infty)) \dx{y}} p_\gamma(\varrho_\gamma-1) \dx{x}\dx{t} \\[0.7em]
        = &\iint_{\OmegaT\cap\{\varrho_\gamma\leq 1-\es\}} \prt*{\int_0^1 n_\gamma (R(y,p_\gamma)-R(y,p_\infty)) \dx{y}} \varrho_\gamma^\gamma(\varrho_\gamma-1) \dx{x}\dx{t}\\[0.7em]
        &\qquad+\iint_{\OmegaT\cap\{\varrho_\gamma>1-\es\}} \prt*{\int_0^1 n_\gamma (R(y,p_\gamma)-R(y,p_\infty)) \dx{y}} p_\gamma(\varrho_\gamma-1) \dx{x}\dx{t}\\[0.7em]
         \leq & \ 2\|R\|_\infty \varrho_M (1-\es)^\gamma+ 2\|R\|_\infty \varrho_M p_M \max\prt*{\es, \frac 1 \gamma |\ln p_M| + o\prt*{\frac 1 \gamma}}.
    \end{split}
\end{equation*}
Choosing $\es=1/\sqrt{\gamma}$, we infer that the right-hand side converges to zero as $\gamma\to\infty$. 

Now we show that, after the extraction of a subsequence, the term 
$$\curlyB=\int_0^1  \prt*{\iint_\OmegaT n_\gamma R(y,p_\infty)(v_\gamma- v_\infty)\dx{x}\dx{t} } \dx{y},$$ 
converges to zero as $\gamma \to \infty$.
Let us choose $y\in(0,1)$. We denote $w_{\gamma}:= R(y,p_\infty)(v_\gamma- v_\infty)$. First of all, there exists a subsequence $\gamma_k$ independent of $y$ such that $w_{\gamma_k}$ converges to zero weakly in $L^2(\Omega_T)$. Let us recall that
\begin{equation*}
       \partial_t n_\gamma(y) = \nabla \cdot(n_\gamma(y) \nabla p_\gamma) +  n_\gamma(y) R(y,p_\gamma).
\end{equation*}
Hence, $\partial_t n_\gamma(y)\in L^2(0,T; H^{-1}(\Omega))$. Therefore, we can apply the compensated compactness theorem, see Theorem~\ref{thm: comp comp}. For all indexes $\gamma_{k_j}$ there exist $\gamma_{k_{j_i}}$ such that
$$\iint_\OmegaT n_{\gamma_{k_{j_i}}}(y) R(y,p_\infty)(v_{\gamma_{k_{j_i}}}- v_\infty)\dx{x}\dx{t}\to 0,$$
as $i\to \infty,$ which implies
$$\iint_\OmegaT n_{\gamma_{k}}(y) R(y,p_\infty)(v_{\gamma_{k}}- v_\infty)\dx{x}\dx{t}\to 0,$$
as $k\to \infty.$ Moreover, the above function is uniformly bounded in $L^1([0,1]).$
Since $\gamma_k$ only depends on the convergence of $v_\gamma$ we have
$$\curlyB=\int_0^1  \prt*{\iint_\OmegaT n_{\gamma_k} R(y,p_\infty)(v_{\gamma_k}- v_\infty)\dx{x}\dx{t} } \dx{y}\to 0,$$
as $k\to\infty$.

Now, we can finally come back to Eqs.\eqref{eq: step1.5}-\eqref{eq: step 2}
\begin{equation} \label{eq: important inequality}
\begin{split}
   \limsup_{\gamma\to\infty}\iint_\OmegaT |\nabla (v_\gamma -v_\infty)|^2\dx{x}\dx{t}\leq  \iint_\OmegaT \partialt{\varrho_\infty} v_\infty \dx{x}\dx{t}.
  \end{split}
\end{equation}
To conclude the proof we will show that the right-hand side is actually equal to zero.
Let us notice that for any $\es>0$
\begin{equation*} 
\begin{split} 
\iint_\OmegaT (\varrho_\infty(x, t+\es) - \varrho_\infty(x,t)) v_\infty \dx{x}\dx{t} =\iint_\OmegaT (\varrho_\infty(x, t+\es)-1+1 - \varrho_\infty(x,t)) v_\infty \dx{x}\dx{t} \leq 0,
  \end{split}
\end{equation*}
where in the last inequality we used Eq.~\eqref{saturation relation pheno}. In a similar fashion we have
\begin{equation*} 
\begin{split} 
\iint_\OmegaT (\varrho_\infty(x, t) - \varrho_\infty(x,t-\es)) v_\infty \dx{x}\dx{t} \geq 0.
  \end{split}
\end{equation*}
Now it remains to prove that 
\begin{equation}
    \label{eq: es to 0}
    \lim_{\es \to 0} \iint_\OmegaT  (\varrho_\infty(x, t+\es) - \varrho_\infty(x,t)) v_\infty  \dx{x}\dx{t} =  \iint_\OmegaT  \partialt{\varrho_\infty} v_\infty \dx{x}\dx{t}.
\end{equation}
We integrate Eq.~\eqref{eq: limit varrho} between $t$ and $t+\es$ to obtain
\begin{equation*}
 \varrho_\infty(t+\es) - \varrho_\infty(t) =\int_t^{t+\es} \Delta v_\infty \dx{s}+ \int_t^{t+\es}\int_0^1\curlyH_\infty\dx{y} \dx{s}.
\end{equation*}
We test the above equation against $\frac 1 \es v_\infty(\cdot, t)$ to get
\begin{equation}\label{eq: t+es - t}
\begin{split}
 \int_\Omega \prt*{\frac{\varrho_\infty(x,t+\es) - \varrho_\infty(x,t)}{\es}} v_\infty(x,t) \dx{x}=-\int_\Omega \frac 1 \es\int_t^{t+\es} \nabla v_\infty(x,s) \dx{s} \ \cdot \nabla v_\infty(x,t) \dx{x}\\
 + \int_\Omega \frac 1 \es \int_t^{t+\es}\int_0^1\curlyH_\infty(y,x,s)\dx{y}  \dx{s} \ v_\infty(x,t)\dx{x}.
\end{split}
\end{equation}
We have
\begin{equation*}
  \frac 1 \es\int_t^{t+\es} \nabla v_\infty(x,s) \dx{s} \rightarrow \nabla v_\infty (x,t), \quad \text{ a.e. in }   \OmegaT.
\end{equation*}
From Eq.~\eqref{eq L2 bound grad v} we have
\begin{equation*}
    \begin{split}
    \iint_\OmegaT \left| \frac 1 \es\int_t^{t+\es} \nabla v_\infty(x,s) \dx{s}\right|^2 \dx{x}\dx{t} \leq& \frac 1 \es \iint_\OmegaT \int_t^{t+\es} |\nabla v_\infty(x,s)|^2 \dx{s} \dx{x}\dx{t}\\
        =&\frac 1 \es \int_0^{T+\es} \int_{\max(0, s-\es)}^{\min(T,s)}\int_\Omega |\nabla v_\infty(x,s)|^2 \dx{x}\dx{t}\dx{s}\\
        \leq& \frac{1}{\es} \int_0^{T+\es}  |\min(T,s)-\max(0,s-\es)| \int_\Omega |\nabla v_\infty(x,s)|^2 \dx{x} \dx{s} \\
        \leq& \ C(T).
    \end{split}
\end{equation*}
Therefore we have
\begin{equation*}
   \frac 1 \es\int_t^{t+\es} \nabla v_\infty(x,s) \dx{s} \rightarrow \nabla v_\infty (x,t), \quad \text{weakly in } L^2(\OmegaT).
\end{equation*}
In an analogous way we can prove that
\begin{equation*}
   \frac 1 \es\int_t^{t+\es} \int_0^1\curlyH_\infty(y,x,s)\dx{y}  \dx{s} \rightarrow \int_0^1\curlyH_\infty(y,x,t)\dx{y}, \quad \text{weakly in } L^2(\OmegaT).
\end{equation*}
Combining Eq.~\eqref{eq: t+es - t} and Eq.~\eqref{eq: limit varrho} we have
\begin{equation*} 
\begin{split}
&\lim_{\es\to 0} \iint_\OmegaT \prt*{\frac{\varrho_\infty(t+\es) - \varrho(t)}{\es}} v_\infty(x,t) \dx{x}\dx{t}\\[0.3em]
&\qquad=-\iint_\OmegaT |\nabla v_\infty|^2 \dx{x}\dx{t}
 + \iint_\OmegaT \prt*{\int_0^1\curlyH_\infty(y,x,t)\dx{y}} \ v_\infty(x,t)\dx{x}\dx{t}\\
 &\qquad=\iint_\OmegaT \partialt{\varrho_\infty} v_\infty \dx{x}\dx{t}.
\end{split}
\end{equation*}
Hence Eq.~\eqref{eq: es to 0} is proven. As a consequence, Eq.~\eqref{eq: important inequality} concludes the proof.
\end{proof}

\noindent
Having proved the strong compactness of $\nabla v_\gamma$, we can finally recover the strong compactness of the pressure itself, by simply applying the Poincar\'e inequality, using the fact that $\Omega$ has been chosen large enough such that the pressure satisfies Dirichlet boundary conditions. 
\begin{corollary}[Strong compactness of $p_\gamma$]\label{coro: strong c p}
Up to the extraction of a subsequence, we have
$$p_\gamma \rightarrow p_\infty, \quad \text{ strongly in } \ L^2(\OmegaT).$$ 
\end{corollary}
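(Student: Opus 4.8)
The plan is to obtain the strong $L^2$-convergence of $p_\gamma$ from that of $v_\gamma$, exploiting the strong convergence of $\nabla v_\gamma$ established in Lemma~\ref{lemma: strong convergence grad vgamma}. Since $\varrho_\gamma$, and hence $v_\gamma=\varrho_\gamma^{\gamma+1}$, vanishes on $\partial\Omega$, the difference $v_\gamma - v_\infty$ belongs to $H^1_0(\Omega)$ for a.e.\ $t$, so I would first invoke the Poincar\'e inequality to upgrade the gradient convergence to convergence of $v_\gamma$ itself:
\begin{equation*}
\iint_\OmegaT |v_\gamma - v_\infty|^2 \dx{x}\dx{t} \le C \iint_\OmegaT |\nabla(v_\gamma - v_\infty)|^2 \dx{x}\dx{t} \xrightarrow[]{\gamma\to\infty} 0,
\end{equation*}
with $C$ depending only on $\Omega$. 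Recalling that $v_\infty = \varrho_\infty p_\infty = p_\infty$ (established in the course of proving \eqref{saturation relation pheno}), this already gives $v_\gamma \to p_\infty$ strongly in $L^2(\OmegaT)$. It therefore remains to show that $p_\gamma$ and $v_\gamma$ share the same strong limit.

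To this end, I would estimate the difference directly. Writing $p_\gamma - v_\gamma = p_\gamma(1-\varrho_\gamma)$ and fixing $\delta\in(0,1)$, I split the domain according to whether $\varrho_\gamma \le 1-\delta$ or $\varrho_\gamma > 1-\delta$. On the first set $p_\gamma = \varrho_\gamma^\gamma \le (1-\delta)^\gamma$ while $(1-\varrho_\gamma)^2\le 1$; on the second set, since $\varrho_\gamma \le \varrho_M = (p_M)^{1/\gamma}\to 1$, for $\gamma$ large enough one has $|\varrho_M - 1|<\delta$ and hence $|1-\varrho_\gamma|<\delta$, while $0\le p_\gamma\le p_M$ by \eqref{eq: l infty rho p}. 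Using that $\OmegaT$ has finite measure, this yields
\begin{equation*}
\iint_\OmegaT |p_\gamma - v_\gamma|^2 \dx{x}\dx{t} = \iint_\OmegaT p_\gamma^2(1-\varrho_\gamma)^2 \dx{x}\dx{t} \le |\OmegaT|\prt*{(1-\delta)^{2\gamma} + p_M^2\,\delta^2}.
\end{equation*}
Taking $\limsup_{\gamma\to\infty}$ makes the first term vanish, and then letting $\delta\to 0$ gives $\|p_\gamma - v_\gamma\|_{L^2(\OmegaT)}\to 0$.

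Combining the two steps through the triangle inequality,
\begin{equation*}
\|p_\gamma - p_\infty\|_{L^2(\OmegaT)} \le \|p_\gamma - v_\gamma\|_{L^2(\OmegaT)} + \|v_\gamma - p_\infty\|_{L^2(\OmegaT)}\to 0,
\end{equation*}
which is the claim. The only genuinely delicate point is the comparison of $p_\gamma$ with $v_\gamma$: everything hinges on the uniform bounds $0\le\varrho_\gamma\le\varrho_M$ and $0\le p_\gamma\le p_M$, together with $\varrho_M\to 1$, which forces $p_\gamma(1-\varrho_\gamma)$ to be small both where the density is nearly saturated and, through the exponential smallness of $p_\gamma$, where $\varrho_\gamma$ is bounded away from one. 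The Poincar\'e step is routine once the homogeneous Dirichlet condition on the large enough domain $\Omega$ is invoked.
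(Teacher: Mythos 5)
Your proposal is correct and follows essentially the same route as the paper: Poincar\'e's inequality (valid because the compact support assumption puts $v_\gamma - v_\infty$ in $H^1_0(\Omega)$) upgrades the strong convergence of $\nabla v_\gamma$ to that of $v_\gamma$, and then $p_\gamma$ is compared with $v_\gamma$. The only difference is that the paper disposes of the second step with the one-line observation that $p_\gamma = v_\gamma^{\gamma/(\gamma+1)}$ and $p_\infty = v_\infty$, whereas you spell it out via the identity $p_\gamma - v_\gamma = p_\gamma(1-\varrho_\gamma)$ and a split of the domain at $\varrho_\gamma = 1-\delta$; both arguments are valid and rest on the same uniform bounds.
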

\begin{proof}
Since we assumed the solutions to be compactly supported for all times $0\leq t\leq T$, by Lemma~\ref{lemma: strong convergence grad vgamma} and Poincar\'e's inequality we infer the strong compactness of $v_\gamma$ in $L^2(\OmegaT).$ Finally, since $p_\gamma= v_\gamma^{\gamma/(\gamma+1)}$ and $p_\infty=v_\infty$, the proof is completed.
\end{proof}

\noindent
Thanks to this result, we can finally identify the limit of the reaction term, \ie the following equality holds almost everywhere in $[0,1]\times \OmegaT$
\begin{equation}
    \label{eq: H}
   \curlyH_\infty(y,x,t)= n_\infty(y,x,t) R(y,p_\infty(x,t)).
\end{equation}
Thanks to the strong compactness of the pressure gradient, we can pass to the limit in Eq.~\eqref{eq: weak form n gamma} to obtain Eq.~\eqref{limit weak equation for n(y)}.

Finally, to complete the proof of Theorem~\ref{thm: complem relation}, we show that the complementarity relation \eqref{eq: p infty pheno} holds true. Let us multiply Eq.~\eqref{eq: new density} by $v_\gamma$ to get
\begin{equation*} 
   \frac{1}{\gamma+2}   \partialt{\varrho_\gamma^{\gamma+2}} = \frac{\gamma}{\gamma+1} v_\gamma \Delta v_\gamma + v_\gamma \int_0^1 n_\gamma R(y,p_\gamma) \dx{y}.
\end{equation*}
As already proven, $v_\gamma, p_\gamma$ and $\nabla v_\gamma$ are strongly compact in $L^2(\OmegaT)$. Therefore, passing to the limit $\gamma\to \infty$ we obtain 
\begin{equation*}
    v_\infty \prt*{\Delta v_\infty + \int_0^1 n_\infty(y) R(y, p_\infty)\dx{y}}=0, \; \; \text{ in } \curlyD'(\Omega\times(0,\infty)),
\end{equation*}
which concludes the proof.

\section{Additional regularity estimates}
\label{sec: add estimates}
Here we prove the additional regularity results on the pressure $p_\gamma$, namely the uniform bound on the pressure gradient in $L^4_{x,t}$. Before giving the proofs of the main results, let us remark that, unlike other works on cross-diffusion systems, we do not account for cross-reaction, but we expect the argument to go through without major changes. 
\begin{remark}
    Let us remark that unlike \cite{GPS, BPPS} in this paper we do not account for \textit{cross}-reaction. However, the proofs of Theorems~\ref{thm: L4 pheno}-\ref{remark: L4} can be extended without major changes to the cross-reaction structured counterpart, namely
    $$\partialt n(y,x,t) - \nabla \cdot(n(y,x,t) \nabla p(x,t)) = \int_0^1 n{(\eta,x,t)} R(\eta, y, p{(x,t)})\dx{\eta}.$$
    Here, the reaction term accounts for the fact that cells can mutate and change their phenotype.
\end{remark}

\begin{proof}[Proof of Theorem~\ref{thm: L4 pheno}]
First of all, let us recall that $\curlyR = \int_0^1 \sigma(\eta)R(\eta,p)\dx{\eta}$, hence $\partial_p \curlyR \leq 0$.
We multiply Eq.~\eqref{eq: pressure pheno} by $-p^\alpha (\Delta p + \curlyR)$ to obtain
\begin{equation}\label{eq: beginning L4alpha}
  -  p^\alpha\partialt p(\Delta p +\curlyR) = - \gamma p^{\alpha+1}(\Delta p + \curlyR)^2 - p^\alpha |\nabla p|^2(\Delta p + \curlyR).
\end{equation}
Now we integrate in space and we split the left-hand side treating each term individually.
\begin{align*}
  -\int_\Omega p^\alpha \partialt p \Delta p \dx{x} &= \frac 12 \int_\Omega p^\alpha \partialt{} |\nabla p|^2\dx{x} + \alpha \int_\Omega p^{\alpha-1} \partialt p |\nabla p|^2 \dx{x}\\[0.3em]
  &= \frac 12 \ddt \int_\Omega p^\alpha |\nabla p|^2 \dx{x}+ \frac \alpha 2 \int_\Omega p^{\alpha-1} \partialt p |\nabla p|^2\dx{x}\\[0.3em]
  &= \frac 12 \ddt \int_\Omega p^\alpha |\nabla p|^2\dx{x} + \frac{\alpha \gamma}{2} \int_\Omega p^\alpha (\Delta p + \curlyR) |\nabla p|^2\dx{x} + \frac{\alpha}{2} \int_\Omega p^{\alpha-1}|\nabla p|^4\dx{x}.
\end{align*}
Let us define the following function
\begin{equation*}
    \overline{\curlyR}(p,\sigma)=\int_0^p q^\alpha \curlyR(q,\sigma) \dx{q} .
\end{equation*}
It immediately follows
\begin{equation*}
     p^\alpha \partialt p \curlyR = \partialt{\overline{\curlyR}} - \int_0^1 \prt*{\int_0^p q^\alpha R(\eta,q)\dx{q}} \partial_t \sigma \dx{\eta}.
\end{equation*}
Now using the equation on the fraction density $\sigma$, Eq.~\eqref{eq: fraction density}, we have
\begin{align*}
    -\int_\Omega p^\alpha  \partialt p\curlyR \dx{x} = -\ddt& \int_\Omega \overline{\curlyR} \dx{x}+ \int_\Omega \int_0^1 \prt*{\int_0^p q^\alpha R(\eta,q)\dx{q}} \nabla \sigma\cdot\nabla p \dx{\eta}\dx{x} \\[0.4em]
    &+ \int_\Omega \int_0^1 \prt*{\int_0^p q^\alpha R(\eta,q)\dx{q}} \prt*{R(\eta,p) - \curlyR(p) }\sigma\dx{\eta}\dx{x}\\
   =-\ddt& \int_\Omega \overline{\curlyR}\dx{x} + \int_\Omega \int_0^1 \prt*{\int_0^p q^\alpha R(\eta,q)\dx{q}} \nabla \sigma\cdot\nabla p \dx{\eta}\dx{x} + \textit{Bdd},
\end{align*}
where we use $\textit{Bdd}$ to denote the bounded term
\begin{equation*}
\int_\Omega \int_0^1 \prt*{\int_0^p q^\alpha R(\eta,q)\dx{q}} \prt*{R(\eta,p) - \curlyR}\sigma\dx{\eta}\dx{x} \leq \frac{C}{\alpha+1} \int_\Omega p^{\alpha+1}\dx{x}\leq C \|p\|^2_{L^2},
\end{equation*}
where $C$ is a positive constant that depends on $\|\curlyR\|_\infty$.
Now let us come back to Eq.~\eqref{eq: beginning L4alpha} and integrate on $\Omega$
\begin{equation}
\label{eq: second step L4}
\begin{split}
\frac{\alpha}{2} \int_\Omega p^{\alpha-1}|\nabla p|^4 &\dx{x} +\gamma\int_\Omega p^{\alpha+1}(\Delta p +\curlyR)^2\dx{x} \\[0.3em]
& =-\prt*{1 +\frac{\alpha \gamma}{2}} \int_\Omega p^\alpha (\Delta p + \curlyR) |\nabla p|^2\dx{x}
+\ddt\int_\Omega \prt*{\overline{\curlyR} - p^\alpha \frac{|\nabla p|^2}{2}}\dx{x}\\[0.3em]
&\quad\qquad-\underbrace{\int_\Omega \int_0^1 \prt*{\int_0^p q^\alpha R(\eta,q)\dx{q}} \nabla \sigma\cdot\nabla p \dx{\eta}\dx{x}}_{\curlyA} -\textit{Bdd}.
\end{split}
\end{equation}
Let us integrate by parts the term $\curlyA$. We obtain
\begin{align*}
-\curlyA=&- \int_0^1  \int_\Omega\prt*{\int_0^p q^\alpha R(\eta,q)\dx{q}} \nabla \sigma\cdot\nabla p \dx{\eta} \dx{x}\\[0.3em]
=& \int_\Omega p^\alpha |\nabla p|^2 \prt*{\int_0^1 R(\eta,p)\sigma\dx{\eta}}\dx{x}
+ \int_0^1 \int_\Omega \prt*{\int_0^p q^\alpha R(\eta,q)\dx{q} } \sigma \Delta p \dx{\eta}\dx{x}\\
    \leq & \|\curlyR\|_\infty p_M^\alpha \int_\Omega |\nabla p|^2\dx{x} +\frac 12 \int_\Omega \dfrac{\prt*{\int_0^1\left(\int_0^p q^\alpha R(\eta,q)\dx{q}\right) \sigma \dx{\eta}}^2}{p^{\alpha+1}}\dx{x} + \frac 12 \int_\Omega p^{\alpha+1} |\Delta p|^2\dx{x}, 
\end{align*}
where in the last line we used Fubini's Theorem and Young's inequality.
Since by assumption both $R(y,p)$ and $\partial_p R(y,p)$ are bounded, the second term in the right-hand side is bounded. 

Combining the estimate on the term $-\curlyA$ with Eq.~\eqref{eq: second step L4} and integrating in time, we obtain
\begin{equation}\label{eq: almost}
\begin{split}
    \frac \alpha 2 \iint_{\Omega_T} p^{\alpha-1}&|\nabla p|^4 \dx{x}\dx{t}+ \gamma  \iint_{\Omega_T} p^{\alpha+1}(\Delta p +\curlyR)^2\dx{x}\dx{t} \\[0.7em]
    \leq &-\prt*{1+\frac{\alpha\gamma}{2}} \underbrace{\iint_{\Omega_T} p^\alpha(\Delta p +\curlyR)|\nabla p|^2\dx{x}\dx{t}}_{\curlyB}+ \int_\Omega \overline{\curlyR}(T)\dx{x} 
    \\[0.5em] &+ \int_\Omega (p_0)^\alpha\frac{|\nabla p_0|^2}{2}\dx{x}
    + \frac 12  \iint_{\Omega_T}\! p^{\alpha+1} |\Delta p|^2 \dx{x}\dx{t}+ \textit{Bdd},
    \end{split}
\end{equation}
where $\textit{Bdd}$ now includes other bounded quantities.
Now it remains to treat the term $\curlyB$. Let us point out here that we cannot estimate it in the same way as in \cite{MePeQu}, since the authors make use of a lower bound of the quantity $\Delta p +\curlyR$, \ie the $L^\infty$-Aronson-B\'enilan estimate, which does not hold for a multi-species system like the one at hand.
For this reason, we deal with the term $\curlyB$ by splitting it into two parts. The one coming from the source term is easier to estimate, since it can be bounded in the following way
\begin{equation}\label{eq: simple bound}
\iint_{\Omega_T}\! p^\alpha \curlyR |\nabla p|^2 \dx{x}\dx{t}\leq p_M^\alpha\|\curlyR\|_\infty \|\nabla p\|_2^2\leq \max(1,p_M)\|\curlyR\|_\infty \|\nabla p\|_2^2.
\end{equation}
The term with $\Delta p$ is instead more involved. We refer the reader to \cite{DP} for the same method applied to the case of one species and $\alpha=0$.
From now on, for the sake of simplicity, we only compute the integral in space. Integrating by parts twice we have
\begin{align}\nonumber
    \int_\Omega p^\alpha \Delta p |\nabla p|^2 \dx{x}=&\int_\Omega\Delta(p^\alpha |\nabla p|^2) p\dx{x}\\[0.3em]
    =&\int_\Omega \Delta p^\alpha |\nabla p|^2 p\dx{x} + \! 2 \alpha\! \int_\Omega \nabla p \cdot \nabla (|\nabla p|^2) p^{\alpha}\dx{x} +\!\int_\Omega p^{\alpha+1}\Delta (|\nabla p|^2)\dx{x}.
    \label{eq: dissipative term}
\end{align}
Computing the sum of the first two terms of the right-hand side, we find
\begin{align*}
  \int_\Omega \Delta p^\alpha &|\nabla p|^2 p \dx{x}+ 2 \alpha \int_\Omega \nabla p \cdot \nabla (|\nabla p|^2) p^{\alpha}\dx{x} \\
  =& \alpha(\alpha-1)\int_\Omega p^{\alpha-1}|\nabla p|^4\dx{x} + \alpha \int_\Omega p^\alpha \Delta p |\nabla p|^2\dx{x} - 2\alpha\int_\Omega p^\alpha \Delta p |\nabla p|^2 \dx{x}- 2 \alpha^2 \int_\Omega p^{\alpha-1}|\nabla p|^4\dx{x}\\
  =& -\alpha(\alpha+1)\int_\Omega p^{\alpha-1}|\nabla p|^4 \dx{x}-\alpha  \int_\Omega p^\alpha \Delta p |\nabla p|^2\dx{x} ,
\end{align*}
where we used integration by parts on the second term.

We compute the last term in Eq.~\eqref{eq: dissipative term} as follows
\begin{align*}
    \int_\Omega p^{\alpha+1}\Delta (|\nabla p|^2)\dx{x} &= 2\int_\Omega p^{\alpha+1}\nabla p \cdot \nabla (\Delta p)\dx{x} +2\int_\Omega p^{\alpha+1}(D_{i,j}^2 p)^2\dx{x}\\
    &=-2(\alpha+1)\int_\Omega p^\alpha |\nabla p|^2 \Delta p\dx{x} - 2 \int_\Omega p^{\alpha+1}|\Delta p|^2 \dx{x}+ 2 \int_\Omega p^{\alpha+1}(D_{i,j}^2 p)^2\dx{x},
\end{align*}
where in the last equality we used integration by parts and we denoted $(D_{i,j}^2 p)^2 =\sum_{i,j} (\partial_{i,j}^2 p)^2$.
By consequence, Eq.~\eqref{eq: dissipative term} now reads
\begin{align*}
    \int_\Omega p^\alpha \Delta p |\nabla p|^2 \dx{x}=-\alpha&(\alpha+1)\int_\Omega p^{\alpha-1}|\nabla p|^4 \dx{x}-(3\alpha+2) \int_\Omega p^\alpha \Delta p |\nabla p|^2\dx{x}\\
     &- 2 \int_\Omega p^{\alpha+1}|\Delta p|^2 \dx{x}+ 2 \int_\Omega p^{\alpha+1}(D_{i,j}^2 p)^2\dx{x},
\end{align*}
and thus
\begin{equation}\label{eq: dissipative final}
\begin{split}
      \int_\Omega p^\alpha \Delta p |\nabla p|^2 \dx{x}= &-\frac \alpha 3 \int_\Omega p^{\alpha-1}|\nabla p|^4 \dx{x}-\frac{2}{3(\alpha+1)}  \int_\Omega p^{\alpha+1}|\Delta p|^2\dx{x}\\[0.3em]
    &+ \frac{2}{3(\alpha+1)}\int_\Omega p^{\alpha+1}(D_{i,j}^2 p)^2\dx{x}.
\end{split}
\end{equation}
Using Eq.~\eqref{eq: dissipative final} in Eq.~\eqref{eq: almost}, we finally find
\begin{equation*} 
\begin{split}
    \frac \alpha 2  \iint_{\Omega_T}\!&p^{\alpha-1}|\nabla p|^4\dx{x}\dx{t} + \gamma  \iint_{\Omega_T}\! p^{\alpha+1}(\Delta p +\curlyR)^2\dx{x}\dx{t} +\frac{2+\alpha \gamma}{3(\alpha+1)}\iint_{\Omega_T}p^{\alpha+1} (D^2_{i,j} p)^2\dx{x}\dx{t}\\[0.4em]
    \leq & \ \frac \alpha 3 \prt*{1+\frac{\alpha\gamma}{2}}\iint_{\Omega_T}\!p^{\alpha-1}|\nabla p|^4\dx{x}\dx{t} +\prt*{\frac{2+\alpha\gamma}{3(\alpha+1)}+\frac 12}\iint_{\Omega_T}\!p^{\alpha+1}|\Delta p|^2\dx{x}\dx{t} + \textit{Bdd},
    \end{split}
\end{equation*}
where $\textit{Bdd}$ includes also the bound in Eq.~\eqref{eq: simple bound}.
By Young's inequality, we have
\begin{equation*}
   \iint_{\Omega_T}\!p^{\alpha+1}|\Delta p|^2 \dx{x}\dx{t}\leq \frac 32 \iint_{\Omega_T}\!p^{\alpha+1}|\Delta p+\curlyR|^2\dx{x}\dx{t} + 3 \iint_{\Omega_T}\!p^{\alpha+1}|\curlyR|^2\dx{x}\dx{t}.
\end{equation*}
Then, we finally have 
\begin{equation}\label{eq: complete estimate for L4}
\begin{split}
      \kappa(\alpha) \iint_{\Omega_T}\!p^{\alpha-1}|\nabla p|^4\dx{x}\dx{t} +\left( \gamma-\frac 32 \right) &\iint_{\Omega_T}\! p^{\alpha+1}(\Delta p +\curlyR)^2\dx{x}\dx{t}\\[0.4em]
      &+\frac{2+\alpha \gamma}{3(\alpha+1)}\iint_{\Omega_T}\!p^{\alpha+1} (D^2_{i,j} p)^2\dx{x}\dx{t} \leq C(T), 
\end{split}
\end{equation}
with $\kappa(\alpha):=\frac{\alpha}{6}(1-\alpha\gamma)$. Since we assumed $0< \alpha< \frac{1}{\gamma}$, this concludes the proof.
\end{proof}

\noindent
For $\alpha=0$, Eq.~\eqref{eq: complete estimate for L4} proven above immediately implies a bound on the pressure gradient which is uniform with respect to $\gamma$. This bound was also investigated in \cite{DP}, where the authors prove its sharpness.

\begin{proof}[Proof of Theorem~\ref{remark: L4}]
Let us take $\alpha =0$ in Eq.~\eqref{eq: complete estimate for L4}. Then, we infer the following bounds
\begin{equation*}
     \iint_{\Omega_T}\! p(\Delta p +\curlyR)^2 \dx{x}\dx{t}\leq C(T), \qquad \iint_{\Omega_T}\!p  (D^2_{i,j} p)^2\dx{x}\dx{t}\leq C(T),
\end{equation*}
and both hold uniformly with respect to $\gamma$. Since both $p$ and $\curlyR$ are uniformly bounded in $L^\infty$, this implies
\begin{equation*}
     \iint_{\Omega_T}\! p^2|\Delta p|^2 \dx{x}\dx{t}\leq C(T), \qquad \iint_{\Omega_T}\!p^2  (D^2_{i,j} p)^2\dx{x}\dx{t}\leq C(T).
\end{equation*} 
Using integration by parts, it follows immediately that the boundedness of these two terms implies $\nabla p \in~L^4(\Omega_T)$.
\end{proof}

\section*{Acknowledgements}
This project has received funding from the European Union's Horizon 2020 research and innovation program under the Marie Skłodowska-Curie (grant agreement No 754362).\\
The author would like to thank Beno\^it Perthame for the fruitful discussions throughout the preparation of this paper.
The author would also like to thank Tomasz D\k{e}biec and Matt Jacobs for their interest in this work and their valuable comments and suggestions.

\begin{appendices}
 
\section{Compensated compactness}\label{app: cc}
\begin{theorem}\label{thm: comp comp}
Let $u_\gamma, w_\gamma \in L^\infty(0,T;L^2(\Omega))$, and let $u_\infty, w_\infty$ be the $L^2$-weak limits of $u_\gamma, w_\gamma$ as $\gamma\to\infty$, respectively. We assume that
\begin{equation*}
    \partialt{u_\gamma} \in L^2(0,T; H^{-1}(\Omega)), \qquad w_\gamma \in L^2(0,T; H^1(\Omega)).
\end{equation*}
Then, up to a subsequence, we have
\begin{equation*}
    \iint_{\Omega_T} u_\gamma w_\gamma \varphi \dx{x}\dx{t} \xrightarrow{\gamma\to\infty} \iint_{\Omega_T} u_\infty w_\infty \varphi\dx{x}\dx{t},
\end{equation*}
for all $\varphi\in C(0,T; C^1(\Omega))$.
\end{theorem}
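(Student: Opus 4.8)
The plan is to upgrade one of the two weakly convergent factors to \emph{strong} convergence in a negative-order space, and then close the product by a standard strong--weak duality argument. The natural candidate is $u_\gamma$: it is bounded in $L^\infty(0,T;L^2(\Omega))$ and, by hypothesis, $\partial_t u_\gamma$ is bounded in $L^2(0,T;H^{-1}(\Omega))$. Since $\Omega$ is bounded, the Rellich embedding $H^1_0(\Omega)\hookrightarrow\hookrightarrow L^2(\Omega)$ dualises to a compact embedding $L^2(\Omega)\hookrightarrow\hookrightarrow H^{-1}(\Omega)$. First I would therefore invoke the Aubin--Lions--Simon lemma with $X=L^2(\Omega)\hookrightarrow\hookrightarrow H^{-1}(\Omega)=B=Y$ (using that $\partial_t u_\gamma$ is bounded in $L^2(0,T;H^{-1}(\Omega))$) to deduce that $(u_\gamma)$ is relatively compact in $C([0,T];H^{-1}(\Omega))$; extracting a subsequence gives $u_\gamma\to u_\infty$ strongly in $L^2(0,T;H^{-1}(\Omega))$, the limit being forced to coincide with the weak $L^2$-limit $u_\infty$.

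Next I would transfer the test function onto the regular factor. Because $\varphi\in C([0,T];C^1(\Omega))$, both $\varphi$ and $\nabla\varphi$ are bounded, so the Leibniz rule $\nabla(w_\gamma\varphi)=\varphi\,\nabla w_\gamma+w_\gamma\,\nabla\varphi$ shows that $w_\gamma\varphi$ is bounded in $L^2(0,T;H^1(\Omega))$ uniformly in $\gamma$. From $w_\gamma\rightharpoonup w_\infty$ in $L^2(0,T;H^1(\Omega))$ one then gets $w_\gamma\varphi\rightharpoonup w_\infty\varphi$ weakly in $L^2(0,T;H^1(\Omega))$. In the setting where the lemma is applied the functions $w_\gamma$ carry homogeneous Dirichlet data, so in fact $w_\gamma\varphi\in L^2(0,T;H^1_0(\Omega))$, which places this factor precisely in the predual of the space $L^2(0,T;H^{-1}(\Omega))$ in which $u_\gamma$ now converges strongly.

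Finally I would pass to the limit. Since $u_\gamma\in L^2(\Omega_T)$, the pairing is an honest integral, $\iint_{\Omega_T}u_\gamma w_\gamma\varphi\dx{x}\dx{t}=\int_0^T\langle u_\gamma,\,w_\gamma\varphi\rangle_{H^{-1},H^1_0}\dx{t}$. The bilinear duality map $L^2(0,T;H^{-1})\times L^2(0,T;H^1_0)\to\mathbb{R}$ is continuous, hence sequentially continuous under strong $\times$ weak convergence; combining the strong convergence $u_\gamma\to u_\infty$ with the weak convergence $w_\gamma\varphi\rightharpoonup w_\infty\varphi$ yields $\int_0^T\langle u_\gamma,w_\gamma\varphi\rangle\dx{t}\to\int_0^T\langle u_\infty,w_\infty\varphi\rangle\dx{t}=\iint_{\Omega_T}u_\infty w_\infty\varphi\dx{x}\dx{t}$, which is the claim.

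I expect the genuine content to sit in the first step, the strong compactness of $u_\gamma$ in $L^2(0,T;H^{-1})$: it relies essentially on $\Omega$ being bounded (so that $L^2\hookrightarrow\hookrightarrow H^{-1}$) together with the $H^{-1}$ bound on $\partial_t u_\gamma$, which is exactly the compensation making the product converge even though neither factor is compact in $L^2(\Omega_T)$. The only other delicate point is that the auxiliary factor $w_\gamma\varphi$ must lie in the predual $H^1_0$ for the strong--weak pairing to close; this is where the homogeneous Dirichlet boundary conditions enter, and absent them one would instead mollify $w_\gamma$ in space and use the uniform $H^1$-bound to control the mollification error, reducing to the smooth case before passing to the limit.
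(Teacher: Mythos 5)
Your argument is correct, but it takes a genuinely different route from the paper. You obtain strong compactness of $u_\gamma$ in $C([0,T];H^{-1}(\Omega))$ (hence in $L^2(0,T;H^{-1}(\Omega))$) via Aubin--Lions--Simon, using the compact embedding $L^2(\Omega)\hookrightarrow\hookrightarrow H^{-1}(\Omega)$ and the bound on $\partial_t u_\gamma$, and then close the product by pairing a strongly convergent sequence in $L^2(0,T;H^{-1})$ against a weakly convergent one in $L^2(0,T;H^1_0)$. The paper instead runs a self-contained double-mollification argument: it regularises $w_\gamma\varphi$ in space and $u_\gamma$ in time, controls the spatial error through the uniform modulus of $L^2$-continuity of translates furnished by the $H^1$-bound (Fr\'echet--Kolmogorov), controls the temporal error by writing $u_\gamma-u_\gamma\star_t\zeta_\sigma$ as an integral of $\partial_t u_\gamma$ and using the $L^2(0,T;H^{-1})$ bound, and only then passes to the limit in the smooth product. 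Your version is shorter and isolates the real mechanism (time-equicontinuity of $u_\gamma$ in a negative space compensating the lack of spatial compactness); its one genuine caveat is the one you flag yourself, namely that the duality pairing $\langle u_\gamma, w_\gamma\varphi\rangle_{H^{-1},H^1_0}$ requires $w_\gamma\varphi$ to lie in $H^1_0(\Omega)$, which holds in the paper's applications (the quantities $p_\gamma$, $v_\gamma$, $v_\gamma-v_\infty$ are compactly supported in $\Omega$) but is not part of the stated hypotheses; the mollification fallback you sketch for the general $H^1$ case is essentially the paper's proof. A minor point of bookkeeping: you should note that multiplication by $\varphi$ maps $L^2(0,T;H^1)$ boundedly into itself (since $\varphi$ and $\nabla\varphi$ are bounded), so that $w_\gamma\varphi\rightharpoonup w_\infty\varphi$ indeed follows from $w_\gamma\rightharpoonup w_\infty$; you state this and it is correct.
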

\begin{proof}
Let $\psi_\es (x):= \frac{1}{\es^d} \psi(\frac{x}{\es})$ for $x\in\R^d$ and $\zeta_\sigma (t):= \frac{1}{\sigma} \zeta(t)$, for $t>0$ be smooth mollifiers.
Then, we compute
\begin{align*}
    \iint_{\Omega_T} u_\gamma w_\gamma \varphi \dx{x}\dx{t}= & \iint_{\Omega_T} u_\gamma (w_\gamma \varphi - (w_\gamma \varphi) \star_x \psi_\es) \dx{x}\dx{t} + \iint_{\Omega_T} u_\gamma (w_\gamma \varphi) \star_x \psi_\es \dx{x}\dx{t}\\[0.3em]
    = & \iint_{\Omega_T}\left(\int_{\R^d} (w_\gamma(x)\varphi(x) - w_\gamma(x-\es z)\varphi(x-\es z)) \psi(z)\dx{z}\right)  u_\gamma\dx{x}\dx{t} \\[0.3em]
    \qquad &+ \iint_{\Omega_T} (u_\gamma- u_\gamma \star_t \zeta_\sigma) (w_\gamma\varphi) \star_x \psi_\es \dx{x}\dx{t}  + \iint_{\Omega_T} (u_\gamma \star_t \zeta_\sigma) (w_\gamma \varphi)\star_x \psi_\es \dx{x}\dx{t} .
\end{align*}
Passing to the limit subsequently in $\varepsilon\to 0, \gamma \to \infty$, and $\delta \to 0$, we have
\begin{equation*}
   \iint_{\Omega_T} (u_\gamma \star_t \zeta_\sigma) (w_\gamma \varphi)\star_x \psi_\es \dx{x}\dx{t} \to \iint_{\Omega_T} u_\infty w_\infty \varphi \dx{x}\dx{t}.
\end{equation*}
It now remains to prove that the other terms converge to zero as $\es\to 0$ and $\sigma\to 0$. By the Fr\'echet-Kolmogorov theorem, we know that 
\begin{align*}
  \int_{\Omega} |(w_\gamma \varphi) (x) -& (w_\gamma \varphi)(x+k)|^2 \dx{x} \\[0.5em]
  &\leq  \int_{\Omega} |w_\gamma(x)( \varphi (x) -\varphi(x+k))|^2\dx{x} + \int_\Omega |\varphi(x+k)(w_\gamma(x)-w_\gamma(x+k)|^2 \dx{x}\\[0.5em]
  &\leq \omega(|k|),
\end{align*}          
where $\omega(|k|)\to 0$ as $k\to 0$. Hence
\begin{align*}
    \iint_{\Omega_T}&\left(\int_{\R^d} (w_\gamma(x)\varphi(x) - w_\gamma(x-\es z)\varphi(x-\es z))\psi(z)\dx{z}\right) u_\gamma(x,t)\dx{x}\dx{t} \\[0.3em]
    &= \int_0^T\int_{\R^d}\left(\int_{\Omega} (w_\gamma(x)\varphi(x) - w_\gamma(x-\es z)\varphi(x-\es z)) u_\gamma(x,t) \dx{x} \right) \psi(z)\dx{z}\dx{t}\\[0.3em]
    &\leq \int_0^T\int_{\R^d} (\omega(\es|z|))^{1/2} \|u_\gamma(t)\|_{L^2(\Omega)} \psi(z)\dx{z}\dx{t} \to 0.
\end{align*}
Now we treat the last term. For the sake of brevity, let us denote $(w_\gamma \varphi)_\es := (w_\gamma \varphi)\star_x \psi_\es$
\begin{align*}
    \iint_{\Omega_T} (u_\gamma- u_\gamma \star_t \zeta_\sigma) (w \varphi)_\es \dx{x}\dx{t} &=  \iint_{\Omega_T} \left( \int_\R (u_\gamma(t)- u_\gamma(t-\sigma s))\zeta(s)\dx{s}\right) (w_\gamma \varphi)_\es\dx{x}\dx{t}\\
    &=  \iint_{\Omega_T} \left[ \int_\R \left(\int_{t-\sigma s}^t \partialt{u_\gamma(\tau)}\dx{\tau}\right)\right] (w_\gamma \varphi)_\es\dx{x}\dx{t}\\
    &= \int_\R\zeta(s) \left(\int_0^T \int_{t-\sigma s}^t \int_{\Omega}  \partialt{u_\gamma(\tau)} (w_\gamma \varphi)_\es\dx{x}\dx{\tau}\dx{t}\right)\dx{s}\\
   &\leq \int_\R \zeta(s) \int_0^T \left(\int_{t-\sigma s}^t \left\|\partialt{u_\gamma(\tau)}\right\|_{H^{-1}(\Omega)}\dx{\tau}\right) \|(w_\gamma \varphi)_\es\|_{H^1(\Omega)} \dx{t}\dx{s}\\
   & \leq C \sigma \int_\R \zeta(s) |s| \int_0^T \|(w_\gamma \varphi)_\es\|_{H^1(\Omega)} \dx{t}\dx{s} \leq C \sigma \to 0,
\end{align*}
as $\sigma \to 0.$

\end{proof}
\section{Convergence of the reaction terms}\label{app: N}
\begin{lemma}
Equations \eqref{eq: conv curlyR} and \eqref{eq: conv n R} hold, \ie
\begin{align*}
    \curlyR_\es \rightharpoonup \curlyR \quad &\text{ weak$^*$ in } L^\infty(Q_T),\\[0.4em]
\label{eq: conv n R}
    n_\es R(y,p_\es) \rightharpoonup n R(y,p) \quad &\text{ weak$^*$ in } L^\infty([0,1]\times Q_T). 
\end{align*}
\end{lemma}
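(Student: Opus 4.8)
The plan is to reduce both statements to a single product principle: the product of a weak$^*$-convergent sequence in $L^\infty$ with a uniformly bounded, a.e.-convergent factor converges weak$^*$. First I would record the strong convergence of the reaction coefficient. Since $\varrho_\es\to\varrho$ strongly in every $L^q(Q_T)$, $1\le q<\infty$, and is uniformly bounded in $L^\infty$, along a subsequence $\varrho_\es\to\varrho$ a.e., hence $p_\es=\varrho_\es^\gamma\to p=\varrho^\gamma$ a.e.\ and boundedly. As $R$ is smooth and bounded, continuity of $R(\eta,\cdot)$ yields $R(\eta,p_\es)\to R(\eta,p)$ a.e.\ in $[0,1]\times Q_T$, with the uniform bound $|R(\eta,p_\es)|\le\|R\|_\infty$.

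For the second convergence I would test against an arbitrary $\varphi\in L^1([0,1]\times Q_T)$ and split (all integrals over $[0,1]\times Q_T$)
\[
\int n_\es R(y,p_\es)\varphi = \int n_\es\big(R(y,p_\es)-R(y,p)\big)\varphi + \int n_\es R(y,p)\varphi .
\]
In the second integral $R(y,p)\varphi\in L^1$ is a fixed test function, so $n_\es\rightharpoonup n$ weak$^*$ sends it to $\int n R(y,p)\varphi$. In the first integral I would use $|n_\es|\le C$, $|R(y,p_\es)-R(y,p)|\le 2\|R\|_\infty$, and the a.e.\ convergence established above: the integrand is dominated by $2C\|R\|_\infty|\varphi|\in L^1$ and tends to $0$ a.e., so dominated convergence forces this term to vanish. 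This is exactly the claimed weak$^*$ convergence of $n_\es R(y,p_\es)$.

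The first convergence is handled identically, the only new point being the extra integration in $\eta$. Testing against $\psi\in L^1(Q_T)$, I regard $\psi$ as an element of $L^1([0,1]\times Q_T)$ that is constant in $\eta$ and use Fubini to write $\iint_{Q_T}\curlyR_\es\,\psi=\int_0^1\!\iint_{Q_T}\sigma_\es(\eta)R(\eta,p_\es)\psi$. The same splitting — now with $\sigma_\es\rightharpoonup\sigma$ in place of $n_\es\rightharpoonup n$, and $R(\eta,p)\psi$ as the fixed $L^1$ integrand — gives convergence to $\iint_{Q_T}\curlyR\,\psi$. The only delicate point, and the reason for the splitting, is that the reaction factor converges merely strongly in $L^q$ (not in $L^\infty$) while the test functions live only in $L^1=(L^\infty)'$: one must never pair two weakly convergent factors, but instead isolate the difference $R(\cdot,p_\es)-R(\cdot,p)$, which is bounded and converges a.e., and dispose of it by dominated convergence against the $L^1$ weight. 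Since the weak$^*$ limits are uniquely identified as $\curlyR$ and $nR(y,p)$, the convergences then hold along the full sequence.
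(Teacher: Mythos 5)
Your proof is correct, but it takes a genuinely different route from the paper. The paper's proof (Appendix~\ref{app: N}) uses the Stone--Weierstrass theorem to approximate $R(\eta,p)$ uniformly by finite sums of separated products $\sum_i a_i(\eta)G_i(p)$; for each such term the pairing of the weak$^*$-convergent factor $\sigma_\es(\eta)a_i(\eta)$ with the strongly convergent factor $G_i(p_\es)\varphi$ passes to the limit directly, and a $3\delta$ argument finishes the proof. You instead bypass the separation of variables entirely: you extract a.e.\ convergence of $p_\es$ from the strong $L^q$ convergence of $\varrho_\es$, isolate the difference $R(\cdot,p_\es)-R(\cdot,p)$ (bounded, tending to zero a.e.), eliminate it by dominated convergence against the $L^1$ test function, and pair the weak$^*$-convergent density with the fixed $L^1$ function $R(\cdot,p)\varphi$. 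Your argument is more elementary and asks slightly less of $R$ (continuity in $p$ for each $y$ plus boundedness suffices, rather than the joint continuity on a compact set that Stone--Weierstrass requires), while the paper's approach avoids any explicit extraction of a.e.-convergent subsequences. One small imprecision: your closing claim that the convergences ``hold along the full sequence'' should be read as ``along the subsequence already extracted in Remark~\ref{remark: convergence}'', since the weak$^*$ convergences of $n_\es$ and $\sigma_\es$ are themselves only subsequential; within that subsequence your subsequence-of-subsequences argument for the a.e.\ convergence of $\varrho_\es$ is the correct way to conclude, and this matches the paper's conventions.
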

\begin{proof}
By the Stone-Weierstrass theorem we know that, for any $\delta>0$, there exists $N>0$ and $\{a_i\}_{i=1}^N$ and $\{G_i\}_{i=1}^N$ such that
\begin{equation}
    \left\|R(y,p_\es)-\sum_{i=1}^N a_i(y) G_i(p_\es)\right\|_{L^\infty} \leq \delta.
\end{equation}
Let $\varphi\in L^1(Q_T)$, such that $\|\varphi\|_{L^1}=1$. Since $\sigma_\es \rightharpoonup \sigma$ weakly$^*$ in $L^\infty((0,1)\times Q_T)$ and $p_\es\rightarrow p$ strongly in $L^2(Q_T)$ as $\es\to 0$, we have
\begin{equation*}
\begin{split}
    \iint_{Q_T} \prt*{\sum_{i=1}^N \int_0^1 \sigma_\es (\eta) a_i(\eta) G_i( p_\es)\dx{\eta}} \varphi(x,t)   \dx{x}\dx{t}=&\sum_{i=1}^N \int_0^1\iint_{Q_T}  \sigma_\es (\eta) a_i(\eta) G_i( p_\es)\varphi(x,t) \dx{x}\dx{t}\dx{\eta}\\[0.4em]
\xrightharpoonup{\es\to 0} &\sum_{i=1}^N \int_0^1\iint_{Q_T}  \sigma (\eta) a_i(\eta) G_i( p)\varphi(x,t) \dx{x}\dx{t}\dx{\eta}.
\end{split}
\end{equation*} 
Therefore, there exists $\es_0$ such that for all $\es<\es_0$ 
\begin{equation}
    \iint_{Q_T}\prt*{ \sum_{i=1}^N \int_0^1 \sigma_\es(\eta) a_i(\eta)G_i(p_\es)\dx{\eta} - \sum_{i=1}^N \int_0^1 \sigma(\eta) a_i(\eta)G_i(p)\dx{\eta}}\varphi \dx{x}\dx{t} \leq \delta.
\end{equation}
We compute
\begin{equation*}
\begin{split}
  &\iint_{Q_T} \prt*{ \int_0^1 \sigma_\es (\eta) R(\eta,p_\es)\dx{\eta}- \int_0^1 \sigma (\eta) R(\eta,p)\dx{\eta}} \varphi(x,t) \dx{x}\dx{t}\\
  \leq & 
  \left\|\int_0^1 \sigma_\es (\eta) R(\eta,p_\es)\dx{\eta} - \sum_{i=1}^N \int_0^1 \sigma_\es(\eta)a_i(\eta)G_i(p_\es)\dx{\eta}\right\|_{L^\infty} \|\varphi\|_{L^1}\\
  &+ \iint_{Q_T}\prt*{ \sum_{i=1}^N \int_0^1 \sigma_\es(\eta) a_i(\eta)G_i(p_\es)\dx{\eta} - \sum_{i=1}^N \int_0^1 \sigma(\eta) a_i(\eta)G_i(p)\dx{\eta}}\varphi \dx{x}\dx{t}\\
  &+ \left\|\sum_{i=1}^N \int_0^1 \sigma(\eta) a_i(\eta)G_i(p)\dx{\eta} - \int_0^1 \sigma (\eta) R(\eta,p)\dx{\eta} \right\|_{L^\infty} \|\varphi\|_{L^1} \leq 3\delta,
\end{split}
\end{equation*} 
for $\es\leq\es_0$. Since $\delta$ was chosen arbitrarily, we conclude that
\begin{align*}
\curlyR_\es := \int_0^1 \sigma_\es (\eta) R(\eta, p_\es)\dx{\eta} \rightharpoonup \int_0^1 \sigma (\eta) R(\eta, p)\dx{\eta}:=\curlyR, \quad \text{ weakly}^* \text{ in } L^\infty(Q_T).
\end{align*}
\ie \eqref{eq: conv curlyR} is proven.
By an analogous argument, we have
$$n_\es R(y,p_\es) \rightharpoonup n R(y,p), \quad \text{ weakly}^* \text{ in } L^\infty((0,1)\times Q_T),$$
and this concludes the proof of \eqref{eq: conv n R}.
\end{proof}

\end{appendices}

\bibliographystyle{abbrv}
\bibliography{bib_final}

\end{document}